\DeclareMathOperator*{\argmin}{argmin}
\DeclareMathOperator*{\sign}{sign}
\newcommand{\Min}{\mathop{\mathrm{minimize}}}
\newcommand{\RR}{\mathbb{R}}
\newcommand{\cS}{\mathcal{S}}
\newcommand{\vb}{\mathbf{b}}
\newcommand{\vx}{\mathbf{x}}
\newcommand{\vy}{\mathbf{y}}
\newcommand{\vz}{\mathbf{z}}
\newcommand{\vQ}{\mathbf{Q}}
\newcommand{\vR}{\mathbf{R}}
\newcommand{\vbeta}{\mathbf{\beta}}
\newcommand{\vtheta}{\mathbf{\theta}}
\newtheorem{thm}{Theorem}
\newtheorem{prop}[thm]{Proposition}
\newtheorem{assum}{Assumption}%[section]
\newtheorem{rem}{Remark}
\newtheorem{defn}{Definition}%[section]
\DeclarePairedDelimiter{\dotp}{\langle}{\rangle}
\title{Run-and-Inspect Method for Nonconvex Optimization and Global Optimality Bounds for R-Local Minimizers
\thanks{This work of Y. Chen is supported in part by Tsinghua Xuetang Mathematics Program and Top Open Program for his short-term visit to UCLA. The work of Y. Sun and W. Yin is supported in part by NSF grant DMS-1720237 and ONR grant N000141712162.}
}
\titlerunning{Run-and-Inspect Method and R-local minimizers}
\author{Yifan Chen
\and
Yuejiao Sun 
\and 
Wotao Yin}
\institute{Yifan Chen\at Department of Mathematical Sciences, Tsinghua University, Beijing, China.\\
\email{chenyifan14@mails.tsinghua.edu.cn}
\and
          Yuejiao Sun \and Wotao Yin \at 
          Department of Mathematics, University of California, Los Angeles, CA 90095.\\
          \email{ sunyj\,/\,wotaoyin@math.ucla.edu}
}
\date{}
\begin{document}

\maketitle
\begin{abstract}
Many optimization algorithms converge to stationary points. When the underlying problem is nonconvex, they may get trapped at local minimizers and occasionally stagnate near saddle points. We propose the Run-and-Inspect Method, which adds an ``inspect'' phase to existing algorithms that helps escape from non-global stationary points. The inspection samples a set of points in a radius $R$ around the current point. When a sample point yields a sufficient decrease in the objective, we resume an existing algorithm from that point. If no sufficient decrease is found, the current point is called an approximate $R$-local minimizer. We show that an $R$-local minimizer is globally optimal, up to a specific error depending on $R$, if the objective function can be implicitly decomposed into a smooth convex function plus a restricted function that is possibly nonconvex, nonsmooth. Therefore, for such nonconvex objective functions, verifying global optimality is fundamentally easier. For high-dimensional problems, we introduce blockwise inspections to overcome the curse of dimensionality while still maintaining optimality bounds up to a factor equal to the number of blocks.
Our method performs well on a set of artificial and realistic nonconvex problems by coupling with gradient descent, coordinate descent, EM, and prox-linear algorithms.
\keywords{R-local minimizer, Run-and-Inspect Method, nonconvex optimization, global minimum, global optimality}
\subclass{90C26 \and 90C30 \and 49M30 \and 65K05}
\end{abstract}

\section{Introduction}
\label{section:introduction}
This paper introduces and analyzes \emph{$R$-local minimizers} in a class of nonconvex optimization and develops a Run-and-Inspect Method to find them.

Consider a possibly nonconvex minimization problem:
\begin{align}
\Min F(\vx) \equiv F(x_1,...,x_s),
\label{eq:minF}
\end{align}
where the variable $\vx \in \mathbb{R}^n $ can be decomposed into $s$ blocks $x_1,...,x_s$, $s\ge 1$. We assume $x_i \in \mathbb{R}^{n_i}$. 

We call a point $\bar{\vx}$ an $R$-local minimizer for some $R>0$ if it attains the minimum of $F$ within the ball with center $\bar{\vx}$ and radius $R$. %Its formal definition is given in \eqref{eq:define R local min} below.

%It is well known that, 
In nonconvex minimization, it is relatively cheap to find a local minimizer but difficult to obtain a global minimizer. For a given $R>0$, the difficulty of finding an $R$-local minimizer lies between those two. Informally, they have the following relationships: for any $R>0$,
\begin{align*}
&F~\text{is convex}~\Rightarrow\\
&~\{\text{local minimizers}\} = \{R\text{-local minimizers}\} = \{\text{global minimizers}\};\\
&F~\text{is nonconvex}~\Rightarrow\\
&~\{\text{local minimizers}\} \supseteq \{R\text{-local minimizers}\} \supseteq \{\text{global minimizers}\}.
\end{align*}

%Some variations of $R$-local minimizers that are more convenient to use include 

% \subsection{Old introduction}
% In this paper, we consider the optimization problem
% \[
% \min F(\vx) = F(x_1,...,x_s),
% \]
% where the variable $\vx \in \mathbb{R}^n $ can be decomposed into $s$ blocks $x_1,...,x_s$ in which $x_i \in \mathbb{R}^{n_i}$ and $F$ is noncovex. The dimension of $\vx$ is potentially very large, and there exist exponential number of bad points such as saddle points and poor local minimizers in the landscape of $F$. Although a lot of works try to use stochastic gradient descent as an implicit regularizer to select a good solution, generally it is not possible to identify these poor solutions only by their first order information and even second order information because these information only cares about a single point and thus is insufficient. To get a better solution we should inspect the behavior of $F$ around the present point of our iteration. However, the computational cost of this inspecting process depends on dimension exponentially, which makes it impractical. In this paper we aim to use blockwise algorithms to deal with this high dimensionality issue.

We are interested in nonconvex problems for which the last ``$\supseteq$'' holds with ``=,'' indicating that any $R$-local minimizer (for a sufficiently large $R$) is global. This is possible, for example, if $F$ is the sum of a quadratic function and a sinusoidal oscillation:
\begin{align} 
F(x)=\frac{x^2}{2}+a\sin\left( b\pi(x-\frac{1}{2b})\right)+a,
\label{eqn:onedimension}
\end{align}
where $x\in\RR$ and $a,b\in \RR$. The range of oscillation is specified by amplitude $a$ and frequency $\frac{b}{2}$. We use  $-\frac{1}{2b}$ to shift its phase so that the minimizer of $F$ is $x^*=0$. We also add $a$ to level the minimal objective at $F(x^*)=0$.

An example of \eqref{eqn:onedimension} with $a=0.3$ and $b=3$ is depicted in Figure \ref{fig:onedimension example}.
\begin{figure}[H]
  \centering
  \includegraphics[width=200pt]{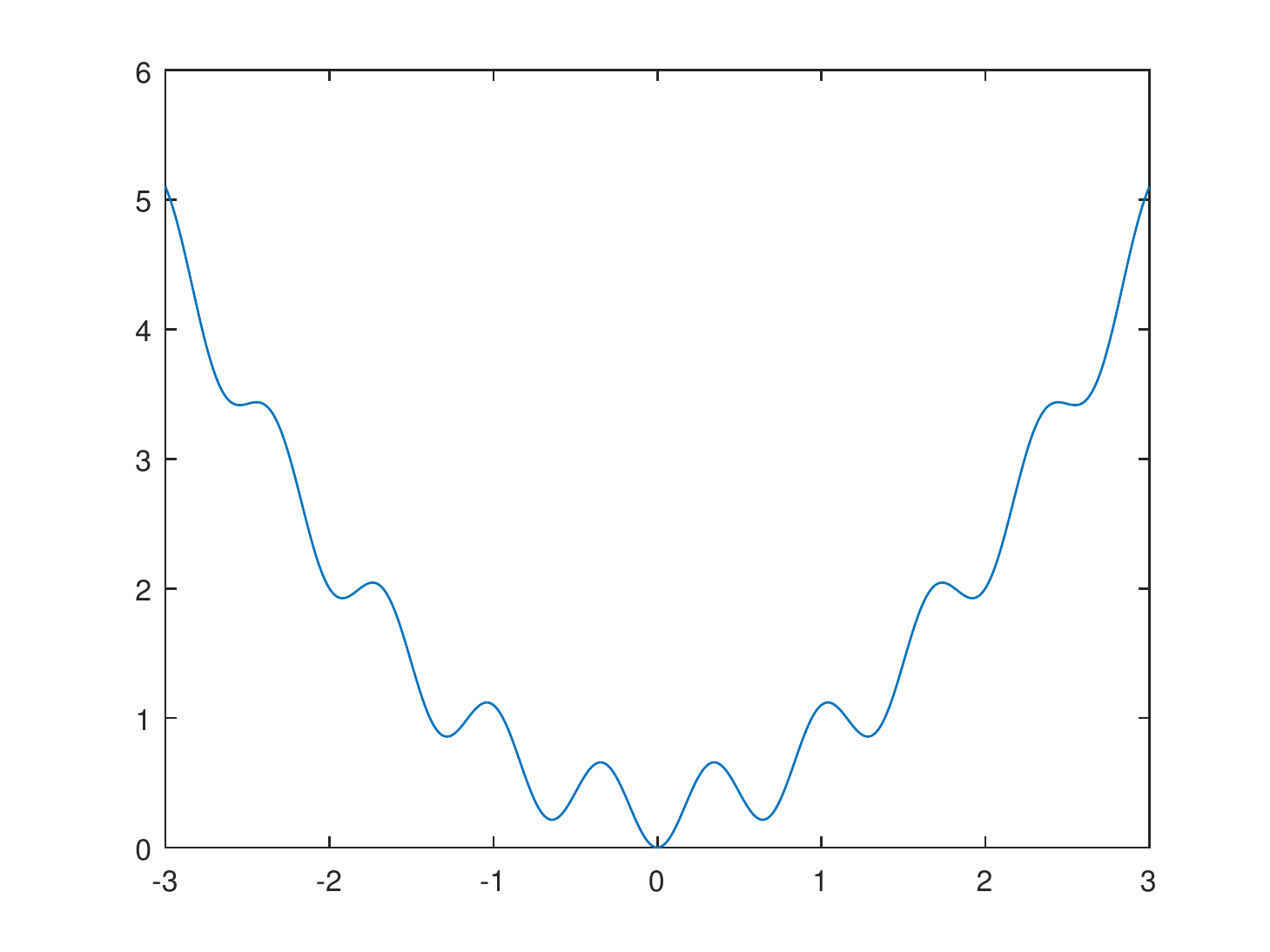}
  \caption{$F(x)$ in \eqref{eqn:onedimension} with $a=0.3,\ b=3$.}
  \label{fig:onedimension example}
\end{figure}
%F-rom Figure \ref{fig:onedimension example}, we 
Observe that $F$ has many local minimizers, and its  only global minimizer is $x^*=0$. %in the landscape and these local minimizers can behave very poorly, causing difficulty for optimization. 
Near each local minimizer $\bar{x}$, we look for an escape point  $x\in[\bar{x}-R,\bar{x}+R]$ such that $f(x) < f(\bar{x})$. We claim that by taking $R\geq \min\{2\sqrt{a},\frac{2}{b}\}$, such an escape point exists for every local minimizer $\bar{x}$ except $\bar{x}=x^*$.

%the overall landscape of the function is convex.  $R> 2\sqrt{a}$ or $R\geq 2/b$
%In addition, every non-global local minimizer cannot keep optimality in its neighborhood ball with a special given radius, i.e. we can find a positive constant $R$ such that the point $\bar{x}$ satisfying $F(\bar{x}) = \min_{x \in [\bar{x}-R,\bar{x}+R]} F(x)$ must be the global minimizer of $F$. Simply we can choose $R\geq 2\sqrt{a}$ or $R\geq 2/b$. See Proposition \ref{prop:one dimensional example} below.
\begin{prop}
Consider minimizing $F$ in \eqref{eqn:onedimension}. If $R\geq\min\{2\sqrt{a},\frac{2}{b}\}$, then the only 
point $\bar{x}$ that satisfies the condition 
\begin{align}\label{eq:Fprop}
F(\bar{x}) = \min\big\{ F(x):{x \in [\bar{x}-R,\bar{x}+R]}\big\}
\end{align}
is the global minimizer $x^*=0$.
%of problem \eqref{eqn:onedimension} if \emph{either} i) $R \geq 2\sqrt{a}$ \emph{or} ii) $R\geq 2/b$.
\label{prop:one dimensional example}
\end{prop}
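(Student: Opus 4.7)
The plan is to rewrite $F$ in a form that makes its lower and upper envelopes transparent, and then exhibit an explicit escape point within distance $R$ for every candidate $\bar{x}\neq 0$ that would violate \eqref{eq:Fprop}.

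Using $\sin(\theta-\pi/2)=-\cos\theta$, I would first put $F$ in the form
\begin{align*}
F(x) \;=\; \frac{x^{2}}{2} + a\bigl(1-\cos(b\pi x)\bigr),
\end{align*}
which makes it immediate that $F\geq 0$ with the unique global minimizer $x^{*}=0$, and sandwiches $F$ between the envelopes $\frac{x^{2}}{2}\leq F(x)\leq \frac{x^{2}}{2}+2a$. It then suffices to show that for every $\bar{x}\neq 0$ there exists $x\in[\bar{x}-R,\bar{x}+R]$ with $F(x)<F(\bar{x})$.

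I would split on the magnitude of $\bar{x}$. If $|\bar{x}|\leq R$, then the choice $x=0$ already works since $F(0)=0<F(\bar{x})$ and $|0-\bar{x}|\leq R$. When $|\bar{x}|>R$, assume without loss of generality $\bar{x}>0$ and use whichever of the two thresholds $2\sqrt{a}$ or $2/b$ is dominated by $R$. In the case $R\geq 2/b$, take $x=\bar{x}-2/b>0$; the periodicity of cosine gives $\cos(b\pi x)=\cos(b\pi\bar{x})$, so $F(x)-F(\bar{x})=(x^{2}-\bar{x}^{2})/2<0$. In the case $R\geq 2\sqrt{a}$, take $x=\bar{x}-R>0$ and apply the two envelopes to get
\begin{align*}
F(\bar{x})-F(x) \;\geq\; \frac{\bar{x}^{2}-(\bar{x}-R)^{2}}{2}-2a \;=\; R\bar{x}-\frac{R^{2}}{2}-2a,
\end{align*}
which is positive because $\bar{x}>R$ combined with $R^{2}\geq 4a$ yields $R\bar{x}>R^{2}\geq R^{2}/2+2a$.

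The crux is the case $|\bar{x}|>R$, where a direct hop to the global minimizer is impossible. The two thresholds $2/b$ and $2\sqrt{a}$ correspond to two genuinely different escape mechanisms---a period-translation argument that preserves the oscillation term while strictly shrinking the quadratic, and an envelope-based comparison in which a single step of length $R$ beats the worst-case oscillation of size $2a$. The hypothesis $R\geq\min\{2\sqrt{a},2/b\}$ exactly guarantees that at least one mechanism applies, with the elementary inequality $R/2+2a/R\leq R$ (equivalent to $R\geq 2\sqrt{a}$ by AM--GM) being the only quantitative step.
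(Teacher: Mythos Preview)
Your proof is correct and follows essentially the same approach as the paper: both arguments split into the two escape mechanisms corresponding to the thresholds $2/b$ (a period-shift that kills the oscillation term) and $2\sqrt{a}$ (an envelope comparison where the quadratic drop dominates the worst-case oscillation $2a$). The only cosmetic differences are that you first rewrite $F$ via $\sin(\theta-\pi/2)=-\cos\theta$ and use $\bar{x}-R$ as the escape point in the envelope case, whereas the paper uses $\bar{x}-2\sqrt{a}$ directly; the resulting inequalities are equivalent.
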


\begin{proof}
Suppose $\bar{x} \neq 0$. Without loss of generality we can further assume $\bar{x}>0$. Recall the global minimizer is $x^*=0$. \\
i) If $\bar{x}\leq 2\sqrt{a}$, then $x^*\in [\bar{x}-R,\bar{x}+R]$ gives $F(\bar{x})=0$, so $\bar{x}$ is the global minimizer. Otherwise, we have $F(\bar{x}-2\sqrt{a}) < F(\bar{x}).$ Indeed,
\[F(\bar{x}-2\sqrt{a})-F(\bar{x})\leq \frac{(\bar{x}-2\sqrt{a})^2}{2}-\frac{\bar{x}^2}{2}+2a = 2\sqrt{a}(2\sqrt{a}-\bar{x})<0.\]
However, since $\bar{x}-2\sqrt{a} \in [\bar{x}-R,\bar{x}+R]$, \eqref{eq:Fprop} fails to hold; contradiction. \\
ii) Similar to part i) above, if $\bar{x}\leq \frac{2}{b}$, then $\bar{x}$ is the global minimizer. Otherwise, we have
\[F(\bar{x}-\tfrac{2}{b})-F(\bar{x})= \frac{(\bar{x}-\tfrac{2}{b})^2}{2}-\frac{\bar{x}^2}{2}<0.\]
This leads to the contradiction similar to part i).\qed
\end{proof}
%The proof works out because  the range and the period of the oscillation are finite. 

%We would like to call the point satisfying the condition $F(\bar{x}) = \min_{x \in [\bar{x}-R,\bar{x}+R]} F(x)$ an $R$-local minimizer of $F$. 
% The choice of $R$ can also depend on $b$, which controls the frequency of oscillation. It is easy to see that the conclusion in Prop \ref{prop:one dimensional example} holds for $R>\frac{2}{b}$. \commwy{Make Proposition 1 two parts, and add this condition as part 2. You can say ``Suppose $\bar x$ is a local minimizer of (1) that satisfies .... Then, it is the global minimizer 0 of (1) if \emph{either} (i) $R > 2\sqrt{a}$ \emph{or} (ii) $R> 2/b$. }

%To this problem, the oscillation is regular, so smoothing techniques such averaging or Gaussian convolution can work effectively. % way to deal with it, for example by a simple guassian convolution. In practical problems, however, 
%However, when oscillation are more complex or the dimension of $x$ increases, smoothing becomes expensive. 
Proposition \ref{prop:one dimensional example} indicates that we can find $x^*$ of this problem %by inspecting small regions. It is enough to first 
by locating an approximate local minimizer $\bar{x}^k$ (using a proper algorithm) and then inspecting a small region near $\bar{x}^k$ (e.g., by sampling a set of points). Once the inspection finds a point $x$ such that $f(x)<f(\bar{x}^k)$, resume the algorithm from $x$ and let it find the next approximate local minimizer $\bar{x}^{k+1}$ such that $f(\bar{x}^{k+1})\le f(x)$. Alternate such running and inspection steps until, at a local minimizer $\bar{x}^K$, the inspection fails to find a better point nearby. Then, $\bar{x}^K$ must be an approximate global solution. 
%This is cheaper than smoothing and, of course, also than global search.
%This example shows that the information of a solution's neighborhood can be very useful. When we have run an existed algorithm and find a solution $\bar{x}$, we can inspect the values of our objective function in an area around $\bar{x}$, namely a ball with radius $R$ to see whether it is optimal. If we find a better point, we move to that point and run the original algorithm again. 
We call this procedure the \emph{Run-and-Inspect Method}. % we will generalize it to multi-dimension minimization.

% As an auxiliary step of existing algorithms, once effects, the inspection phase will surely improve the performance. 
The coupling of ``run'' and ``inspect'' is simple and flexible because, no matter which point the ``run'' phase generates, being it a saddle point, local minimizer, or global minimizer, the ``inspect'' phase will either improve upon it or verify its optimality. %If ``run'' is able to avoid saddle points and locate non-global local minimizers, then our inspections act to escape the local valleys. If the ``run'' is ``weaker'' and can only reach stationary points (many fast algorithms in section ~\ref{related work: 1st algorithm} achieve this), for example a saddle point is obtained, then the escaping is easily realized by inspecting the nearby region, since a descent direction always exists around it. 
Because saddle points are easier to escape from than a non-global local minimizer, hereafter, we ignore saddle points in our discussion. Related saddle-point avoiding algorithms are reviewed below along with other literature.

Sample-based inspection works in low dimensions. However, it suffers from the curse of dimensionality, as the number of points will increase exponentially with the dimension. For high-dimensional problems, the cost will be prohibitive. To address this issue, we define  the  blockwise $\mathbf{R}$-local minimizer and break the inspection into $s$ blocks of low dimensions: $\vx=[x_1^T~x_2^T \cdots ~x_s^T]^T$ where $x_i \in \mathbb{R}^{n_i}$. We call a point $\bar{\vx}$ a \emph{blockwise $\mathbf{R}$-local minimizer}, where $\mathbf{R}=[R_1~R_2~\cdots~R_s]^T>0$, if it satisfies
\begin{align}
\label{eqn:block R local}
    F(\bar{x}_1,...,\bar{x}_i,...,\bar{x}_s)\leq  \min\limits_{x_i \in B(\bar{x}_i,R_i)} F(\bar{x}_1,...,x_i,...,\bar{x}_s), \ \ \forall 1 \leq i \leq s,
\end{align}
where $B(x,R)$ is a closed ball with center $x$ and radius $R$.
To locate a blockwise $\mathbf{R}$-local minimizer, the inspection is applied to every block while fixing the others. Its cost grows linearly in the number of blocks when the size of every block is fixed. %(\textbf{should be deleted? it's true the dimension of one block is fixed})

This paper studies $R$-local and blockwise $\mathbf{R}$-local minimizers and develop their global optimality bounds
for a class of function $F$ that is the sum of a smooth, strongly convex function and a restricted nonconvex function. (Our analysis assumes a property weaker than strong convexity.) Roughly speaking, the landscape of $F$ is convex at a coarse level, and it can have many local minima. (Arguably, if the landscape of $F$ is overall nonconvex, minimizing $F$ is fundamentally difficult.)

This decomposition is implicit and only used to prove bounds. Our Run-and-Inspect Method, which does \emph{not} use the decomposition, can still provably find a solution that has a bounded distance to a global minimizer and an objective value that is bounded by the global minimum. Both bounds can be zero with a finite $R$. %, and hence will be very useful in practice.    

The radius $R$ affects theoretical bounds, solution quality, and inspection cost. If $R$ is very small, the inspections will be cheap, but the solution returned by our method will be less likely to be global. On the other hand, an excessive large $R$ leads to expensive inspection and is unnecessary since the goal of inspection is to escape local minima rather than decrease the objective. Theoretically, Theorem~\ref{thm:global case no split} indicates a proper choice $R = 2\sqrt{\beta/L}$, where $\beta,L$ are parameters of the functions in the implicit decomposition. Furthermore, if $R$ is larger than a certain threshold given in Theorem~\ref{thm:global opt global}, then $\bar{\vx}$ returned by our method must be a global minimizer. However, as these value and threshold are associated with the implicit decomposition, they are typically unavailable to the user.

One can imagine that a good practical choice of $R$ would be the radius of the global-minimum valley, assuming this valley is larger than all other local-minimum valleys. This choice is hard to guess, too. Another choice of $R$ is roughly inversely proportional to $\|\nabla f\|$, where $f$ is the smooth convex component in the implicit decomposition of $F$. It is possible to estimate $\|\nabla f\|$ using an area maximum of $\|\nabla F\|$, which itself requires a radius of sampling, unfortunately. ($\|\nabla F\|$ is zero at any local minimizer, so its local value is useless.) However, this result indicates that local minimizers that are far from the global minimizer are easier to escape from. 

We empirically observe that it is both fast and reliable to use a large $R$ and sample the ball $B(\bar{\vx},R)$ outside-in, for example, to sample on a set of rings of radius $R, R-\Delta R, R-2\Delta R,\ldots>0$. In most cases, a point on the first couple of rings is quickly found, and we escape to that point. The smallest ring is almost never sampled except when $\bar{\vx}$ is already an (approximate) global minimizer. Although the final inspection around a global minimizer is generally unavoidable, global minimizers in problems such as compressed sensing and matrix decomposition can be identified without inspection because they have the desired structure or attained a lower bound to the objective value. Anyway, it appears that choosing $R$ is ad hoc but not difficult. Throughout our numerical experiments, we use $R=O(1)$ and obtain excellent results consistently. 

The exposition of this paper is limited to deterministic methods though it is possible to apply stochastic techniques. We can undoubtedly adopt stochastic approximation in the ``run'' phase when, for example, the objective function has a large-sum structure. Also, if the problem has a coordinate-friendly structure~\cite{PengWuXuYanYin2016_coordinate}, we can randomly choose a coordinate, or a block of coordinates, to update each time. Another direction worth pursuing is stochastic sampling during the ``inspect'' phase. These stochastic techniques are attractive in specific settings, but we focus on non-stochastic techniques and global guarantees in this paper.

\subsection{Related work}
%There have been a lot of works trying to obtain a meaningful solution in nonconvex optimization rather than the global minimizer. % which is extremely difficult to get due to the NP-hard barrier. And 
%These works, %mainly consider two aspects : the first is to 
%on one hand, establish meaningful properties of their obtained solutions and, %the second is to
%on the other hand,
%design efficient algorithms to find these meaningful solutions. Paper \cite{ge2016matrix} calls the former the geometric aspect and the latter the algorithmic aspect. 
%Although the geometric aspect justifies the value of the solution, without it we are also able to obtain a solution at least better than stationary point by using these algorithms. Thus in this sense the algorithmic aspect is more flexible and useful in practice. Our Run-and-Inspect Method can be seen as the algorithmic aspect related to $\vR$-local minimizers.

\subsubsection{No spurious local minimum}
For certain nonconvex problems, a local minimum is always global or good enough. Examples include tensor decomposition~\cite{ge2015escaping}, matrix completion~\cite{ge2016matrix}, phase retrieval~\cite{sun2016geometric}, and dictionary learning~\cite{sun2015complete} under proper assumptions. When those assumptions are violated to a moderate amount, spurious local minima may appear and be possibly easy to escape. We will inspect them in our future work.

\subsubsection{First-order methods, derivative-free method, and trust-region method}
\label{related work: 1st algorithm}
For nonconvex optimization, there has been recent work on first-order methods that can guarantee convergence to a stationary point. Examples include the block coordinate update method~\cite{xu2013block}, ADMM for nonconvex optimization~\cite{wang2015global}, the accelerated gradient algorithm~\cite{ghadimi2016accelerated}, the stochastic variance reduction method~\cite{reddi2016stochastic}, and so on.

Because the ``inspect'' phase of our method uses a radius, it is seemingly related to the trust-region method~\cite{conn2000trust,martinez2017cubic} and derivative-free method~\cite{ConnScheinbergVicente2009_introduction}, both of which also use a radius at each step. However, the latter methods are not specifically designed to escape from a non-global local minimizer.
\subsubsection{Avoiding saddle points}
A recent line of work aims to avoid saddle points and converge to an $\epsilon$-second-order stationary point $\bar{\vx}$ that satisfies
\begin{align}
\|\nabla F(\bar{\vx})\|\leq \epsilon \quad \text{and} \quad \lambda_{\min}(\nabla^2 F(\bar{\vx}))\geq -\sqrt{\rho \epsilon}, \label{eps-saddle point}
\end{align}
where $\rho$ is the Lipschitz constant of $\nabla^2 F(\vx)$. %Note that this condition only cares about a single point.
%From the geometric perspective, many problems enjoy 
Their assumption is the \emph{strict saddle} property, that is, a point satisfying \eqref{eps-saddle point} for some $\rho>0$ and $\epsilon>0$ must be an approximate local minimizer. 
On the algorithmic side, there are second-order algorithms~\cite{nesterov2006cubic,pascanu2014saddle} and first-order stochastic methods~\cite{ge2015escaping,jin2017escape,panageas2016gradient} that can escape saddle points. The second-order algorithms use Hessian information and thus are more expensive at each iteration in high dimensions. Our method can also avoid saddle points. %First-order stochastic methods are cheaper only utilize first order information of the objective function and to the best of our knowledge their main strategy is to add isotropic guassian noise to the gradient descent method. If a point does not satisfy \eqref{eps-saddle point}, there must exist a direction which leads to a constant decrease of the objective function and adding isotropic noise is able to find this direction with high probability. This analysis works when the property of the solution they want to find is pointwise, like in $\eqref{eps-saddle point}$. 

%From another angle since the stochastic method indeed generates a Markov Chain rather than a sequence, we can bound the hitting time of the chain to a certain set. The work \cite{zhang2017hitting} analyzes the hitting time of the Stochastic Gradient Langevin Dynamics (SGLD) to arbitrary set and it shows that SGLD can hit the set of points satisfying \eqref{eps-saddle point} in polynomial time with high probability. More importantly it proves that if there is a function $G$ such that $\|F-G\|_{\infty}$ is very small then SGLD can also hit the $\epsilon$-second-order stationary point set of $G$ in polynomial time with high probability. This result implies that when $F$ is a small perturbation of a convex function $G$, the SGLD algorithm can find an approximate global minimizer of $G$ efficiently. In this sense the method is able to inspect information in a small area around a point automatically. In this paper we want to illustrate this area explicitly and try to find ways to obtain an $\mathbf{R}$-local solution with theoretical guarantee. Also, as many stochastic methods can be seen as sampling method, they can be applied to design efficient inspection step. This will be our future work.

\subsubsection{Simulated annealing}
Simulated annealing (SA)~\cite{kirkpatrick1987optimization} is a classical method in global optimization, and thermodynamic
principles can interpret it. SA uses a Markov chain with a stationary distribution  $\sim e^{-\frac{F(\vx)}{T}}$, where $T$ is the temperature parameter. By decreasing $T$, the distribution tends to concentrate on the global minimizer of $F(\vx)$. However, it is difficult to know exactly when it converges, and the convergence rate can be extremely slow. 

%On the other hand,
SA can be also viewed as a method that samples the Gibbs distribution using Markov-Chain Monte Carlo (MCMC). Hence, we can apply SA in the ``inspection'' of our method. SA will generate more samples in a preferred area that are more likely to contain a better point, which once found will stop the inspection. % and we will benefit a lot from it since the inspection step is implemented in a hit-and-run fashion. 
Apparently, because of the hit-and-run nature of our inspection, we do not need to wait for the SA dynamic to converge. % and when the sampling points reach a given density we are able to give an estimate on the optimality of our obtained point.  

\subsubsection{Flat minima in the training of neural networks}
Training a (deep) neural network involves nonconvex optimization. We do not necessarily need to find a global minimizer. A local minimizer will suffice if it generalizes well to data not used in training. There are many recent attempts~\cite{chaudhari2016entropy,chaudhari2017deep,Sagun2016Singularity} that investigate the optimization landscapes and propose methods 
% that local 
to find
local minima sitting in ``rather flat valleys.'' %This provides insights for practioners to find a better solution but the concept of flatness is somehow a little abstract and hard to illustrate.

Paper~\cite{chaudhari2016entropy} uses entropy-SGD iteration to favor flatter minima. It can be seen as a PDE-based smoothing technique~\cite{chaudhari2017deep}, which shows that the optimization landscape becomes flatter after smoothing. % but their criteria of a flat minima is the behavior of eigenvalues of its Hessian, which is still a pointwise property. 
It makes the theoretical analysis easier and provides explanations for many interesting phenomena in deep neural networks. But, as~\cite{wu2017towards} has suggested, a better non-local quantity is required to go further.

\subsection{Notation}
Throughout the paper, $\|\cdot\|$ denotes the Euclidean norm. Boldface lower-case letters (e.g., $\vx$) denote vectors. However, when a vector is a block in a larger vector, it is represented with a lower-case letter with a subscript, e.g., $x_i$. %Each coordinate or a block of coordinates of a vector is denoted by standard lower-case letters (e.g., $x$).
%and standard and boldface letters  represent vectors but keep fine letters for the block variable even when it is a vector. 

% \subsubsection{Block Coordinate Descent} 
% Introduction of BCD in convex case and coordinate friendly structure.\\
% a way to alleviate the high dimensional difficulty.
%\subsection{Contribution}
\subsection{Organization}
The rest of this paper is organized as follows. 
% ({\bf needs updating})
% Section 2 reviews notation and existing analytic results. 
Section \ref{section:main results} presents the main analysis of $R$-local and blockwise $\mathbf{R}$-local minimizers, and then introduces the Run-and-Inspect Method. Section \ref{section:numerical experiments} presents numerical results of our Run-and-Inspect method. Finally, Section \ref{section conclusions} concludes this paper.

\section{Main Results}
\label{section:main results}
%We introduce our results on geometric side and algorithmic side separately. 
In sections \ref{section:error bound}--\ref{section:blockwise R-local}, we develop theoretical guarantees for our $R$-local and $\mathbf{R}$-local minimizers for a class of nonconvex problems. Then, in section \ref{section:algorithm}, we design algorithms to find those minimizers.

\subsection{Global optimality bounds}
\label{section:error bound}
In this section, we investigate an approach toward deriving error bounds for a point with certain properties. 

Consider problem \eqref{eq:minF}, and let $\vx^*$ denote one of its global minimizers. A global minimizer owns many nice properties. Finding a global minimizer is equivalent to finding a point satisfying all these properties. Clearly, it is easier to develop algorithms that aim at finding a point $\bar{\vx}$ satisfying only some of those properties. An example is that when $F$ is everywhere differentiable, $\nabla F(\vx^*)=0$ is a necessary optimality condition. So, many first-order algorithms that produce a sequence $\vx^k$ such that $\|\nabla F(\vx^k)\|\to 0$ \emph{may} converge to a global minimizer.
%are sufficient to guarantee the global optimality of $\vx^*$ for a given class of $F$. Hence, finding a global minimizer is equivalent to finding a point satisfying that subset of properties. %, which could be extremely difficult. As an alternative way we would like to extract one of these properties and try to find a point $\bar{\vx}$ just satisfying this property, which is much easier. 
%To ensure $\bar{\vx}$ make sense we need to make full use of the problem structure at hand and choose a suitable property, which will be used to
Below, we focus on choosing the properties of $\vx^*$ so that a point $\bar{\vx}$ satisfying the same properties will enjoy bounds on $F(\bar{\vx})-F(\vx^*)$ and $\|\bar{\vx}-\vx^*\|$. Of course, proper assumptions on $F$ are needed, which we will make as we proceed.

Let us use $\mathbf{Q}$ to represent a certain set of properties of $\vx^*$, and define
\begin{align}
S_{\mathbf{Q}}=\{\vx : \vx \ \text{satisfies property}\  \vQ\},
\end{align}
which includes $\vx^*$.
For any point $\bar{\vx}$ that also belongs to the set, we have 
\[
F(\bar{\vx})-F(\vx^*) \leq \max_{\vx,\vy \in S_\vQ} F(\vx)-F(\vy) 
\]
and 
\begin{align*}
\|\bar{\vx}-\vx^*\| \leq \text{diam}(S_{\vQ}), 
\end{align*}
where diam($S_{\vQ}$) stands for the diameter of the set $S_{\vQ}$. Hence, the problem of constructing an error bound reduces to analyzing the set $S_{\vQ}$ under certain assumptions on $F$. %, which  depends on the property we extract.

%The choice of $S_Q$ can be very flexible, for example to be the condition in \eqref{eps-saddle point}, or the one in the introduction \eqref{eqn:block R local}. However, it is not easy to analyze these $S_Q$ directly. Here we first assume $F$ to be differentiable and strongly convex with parameter $\mu$ to gain insights about the choice of $Q$. 
As an example, consider a $\mu$-strongly convex and differentiable $F$ and a simple choice of
$
\mathbf{Q}
$
as $\|\nabla F(\vx)\|\leq \delta$ with $\delta>0$. This choice is admissible since $\|\nabla F(\vx^*)\|=0\leq \delta$.
For this choice, we have
\[
F(\bar{\vx})-F(\vx^*) \leq \frac{\|\nabla F(\bar{\vx})\|^2}{2\mu} \leq \frac{\delta^2}{2\mu}, 
\]
and 
\[
\|\bar{\vx}-\vx^*\| \leq \frac{\|\nabla F(\bar{\vx})\|}{\mu}\leq \frac{\delta}{\mu},
\]
where the first ``$\le$'' in both bounds follows from the strong convexity of $F$. %Strong convexity is actually special case of the Polyak-\L ojasoewicz (PL) inequality \cite{Poljak1963Gradient}, which will be demonstrated in Assumption \ref{assumption:PL inequality}.\\

%Sticking with the same $\mathbf{Q}$, w
We now restrict $F$ to the implicit decomposition 
\begin{align}\label{eq:F=f+r}
\boxed{F(\vx) = f(\vx)+r(\vx).}
\end{align}
We use the term ``implicit'' because this decomposition is only used for analysis, not required by our Run-and-Inspect Method.
Define the sets of the global minimizers of $F$ and $f$ as, respectively,
\begin{align*}
\chi^* &:= \{\vx: F(\vx) = \min_{\vy} F(\vy)\},\\
\chi_f^* &:= \{\vx: f(\vx) = \min_{\vy} f(\vy)\}.
\end{align*}
%where the assumptions $f$ and $r$ are given below.
 %Examples!%$F$ and we do not know $f$ and $r$ exactly or they are expensive to obtain. 
%Simply speaking we assume the first term $f$ owns convexity property to some degree and $r$ has a very small impact on the behavior of $f$. This structure gives us a lot of convenience for theoretical analysis and we can use the gradient information of $f$ to establish an estimate between a solution we get and the global minimum of $F$. Assumptions on $f$ and $r$ are given below.

Below we make three assumptions on \eqref{eq:F=f+r}. The first and third assumptions are used throughout this section. Only some of our results require the second assumption.
\begin{assum}
$f(\vx)$ is differentiable, and $\nabla f(\vx)$ is $L$-Lipschitz continuous. 
\label{assumption:f}
\end{assum}

\begin{assum}
$f(\vx)$ satisfies the Polyak-\L ojasiewicz (PL) inequality \cite{polyak1963gradient} with $\mu>0$:
\begin{align}\label{eqn:PL}
\frac{1}{2}\|\nabla f(\vx)\|^2\geq \mu(f(\vx)-f(\vx^*)), \quad \forall \vx \in\RR^n,~ \vx^*\in\chi_f^*.
\end{align}
\label{assumption:PL inequality}
\end{assum}
Given a point $\vx$, we define its projection% $\vx_{\text{P}}$ is the projection of $x$ onto the global minimizer set $\chi^*_f$. 
\begin{align*}
\vx_{\text{P}} := \argmin_{\vx^*\in\chi_f^*}\{\|\vx^*-\vx\|%: f(\vx^*)\le f(\vy),~\forall \vy\in\dom f
\}.
\end{align*} Then, the PL inequality \eqref{eqn:PL} yields the quadratic growth (QG) condition \cite{Karimi2016Linear}:
\begin{equation}
\label{eqn:QG}
f(\vx)-f(\vx^*)=f(\vx)-f(\vx_{\text{P}})\geq\frac{\mu}{2}\|\vx-\vx_{\text{P}}\|^2,\quad \forall \vx \in\RR^n.
\end{equation}
Clearly, \eqref{eqn:PL} and \eqref{eqn:QG} together imply
\begin{align}\label{eqn:PL+QG}
\|\nabla f(\vx)\|\geq\mu\|\vx-\vx_{\text{P}}\|.
\end{align}
Assumption \ref{assumption:PL inequality} ensures that the gradient of $f$ bounds its objective error.
\begin{assum} $r(\vx)$ satisfies $|r(\vx)-r(\mathbf{y})|\leq \alpha \|\vx-\mathbf{y}\|+2\beta$ in which $\alpha, \beta\ge 0$ are constants.
\label{assumption:r}
\end{assum}
Assumption \ref{assumption:r} implies that $r$ is overall $\alpha$-Lipschitz continuous with additional oscillations up to $2\beta$. In the implicit decomposition \eqref{eq:F=f+r}, though $r$ can cause $F$ to have non-global local minimizers, its impact on the overall landscape of $f$ is limited. For example, the $\ell_p^p ~(0<p<1)$ penalty in compressed sensing is used to induce sparsity of solutions. It is nonconvex and satisfies our assumption
\begin{align*}
    ||x|^p-|y|^p|\leq\big||x|-|y|\big|^p\leq p|x-y|+1-p,\quad x,y\in\RR.
\end{align*}
%, which makes $r$ cannot lead to a big impact on the landscape of $f$. We do not require $r$ is differentiable and thus $F$ can be nonsmooth.
In fact, many sparsity-induced penalties satisfy Assumption~\ref{assumption:r}. Many of them are sharp near $0$ and thus not Lipschitz there. In Assumption~\ref{assumption:r}, $\beta$ models their variation near $0$ and $\alpha$ controls their increase elsewhere.

%\textbf{mention examples of admissible $r$.}

%Let the follow sets of global minimizers
%\begin{align}
%\chi^* &:= \{\vx: F(\vx) = \min_{\vy} F(\vy)\},\\
%\chi_f^* &:= \{\vx: f(\vx) = \min_{\vy} f(\vy)\}.
%\end{align}
%$\chi^*$ denotes the global minimizer set of $F$ and $\chi^*_f$ denotes the global minimizer set of $f$. 
In section \ref{section:R-local}, we will show that every $\vx^* \in \chi^*$ satisfies $\|\nabla f(\vx^*)\|\leq \delta$ for a universal $\delta$ that depends on $\alpha,\beta,L$. So, we choose the condition
%the property $\mathbf{Q}$ to be $\|\nabla f(\vx)\|\leq \delta$. 
\begin{align}\label{eq:Q}
\boxed{\mathbf{Q}:~\|\nabla f(\vx)\|\leq \delta.}
\end{align}

To derive the error bound, we introduce yet another assumption:
\begin{assum}
The set $\chi^*_f$ is bounded. That is, there exists $M\geq 0$ such that, for any $\vx,\mathbf{y} \in \chi^*_f$, we have $\|\vx-\mathbf{y}\|\leq M$. 
\label{assumption:minimizer set}
\end{assum}
When $f$ has a unique global minimizer, we have $M=0$ in Assumption \ref{assumption:minimizer set}.

%Let $$S_f^{\delta}:=\{\vx : \|\nabla f(\vx)\|\leq \delta \}.$$ 
%We can establish the following error bounds. % associated with the set $S_f^{\delta}$.
\begin{thm}
Take Assumptions \ref{assumption:f}, \ref{assumption:PL inequality} and \ref{assumption:r}, and assume that all points in $\chi^*$ have property $\mathbf{Q}$ in \eqref{eq:Q}. Then, the following properties hold for every $\bar{\vx}\in S_{\vQ}$\emph{:}
\begin{enumerate}
        \item $F(\bar{\vx})-F^* \leq \frac{\delta^2}{2\mu}+2\beta$, if $\alpha=0$ in Assumption \ref{assumption:r};
        \item  
        $d(\bar{\vx},\chi^*)\leq \frac{2\delta}{\mu}+M$ and $F(\bar{\vx})-F^* \leq \frac{\delta^2+2\alpha \delta}{\mu}+\alpha M + 2\beta$, if $\alpha\geq 0$ and Assumption \ref{assumption:minimizer set} holds. 
\end{enumerate}
\label{thm:error bound}
\end{thm}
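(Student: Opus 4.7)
The plan is to pick any $\vx^*\in\chi^*$ and decompose $F(\bar{\vx})-F(\vx^*)=[f(\bar{\vx})-f(\vx^*)]+[r(\bar{\vx})-r(\vx^*)]$, bounding each piece separately from the hypotheses. The recurring subtlety is that $\vx^*$ minimizes $F$ but not necessarily $f$, so the PL inequality \eqref{eqn:PL} and its consequence \eqref{eqn:PL+QG} cannot be invoked directly between $\bar{\vx}$ and $\vx^*$. My workaround is to insert auxiliary points from $\chi^*_f$ into the chain and control the resulting gaps using property $\mathbf{Q}$ on both $\bar{\vx}$ and $\vx^*$, together with Assumption~\ref{assumption:minimizer set}.

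For part (1), with $\alpha=0$, Assumption~\ref{assumption:r} immediately gives $r(\bar{\vx})-r(\vx^*)\leq 2\beta$. For the smooth part, pick any $\vx^*_f\in\chi^*_f$; since $f(\vx^*)\geq f(\vx^*_f)$, we have $f(\bar{\vx})-f(\vx^*)\leq f(\bar{\vx})-f(\vx^*_f)$, and Assumption~\ref{assumption:PL inequality} together with property $\mathbf{Q}$ bounds this by $\|\nabla f(\bar{\vx})\|^2/(2\mu)\leq\delta^2/(2\mu)$. Adding the two bounds yields part (1).

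For the distance bound in part (2), let $\bar{\vx}_P$ and $\vx^*_P$ denote the projections of $\bar{\vx}$ and $\vx^*$ onto $\chi^*_f$. Applying \eqref{eqn:PL+QG} together with $\mathbf{Q}$ to both $\bar{\vx}$ and $\vx^*$ gives $\|\bar{\vx}-\bar{\vx}_P\|\leq\delta/\mu$ and $\|\vx^*-\vx^*_P\|\leq\delta/\mu$, while Assumption~\ref{assumption:minimizer set} gives $\|\bar{\vx}_P-\vx^*_P\|\leq M$. The triangle inequality then yields $\|\bar{\vx}-\vx^*\|\leq 2\delta/\mu+M$, and taking the infimum over $\vx^*\in\chi^*$ produces the claimed bound on $d(\bar{\vx},\chi^*)$. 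For the accompanying objective bound, the $f$-gap is still at most $\delta^2/(2\mu)\leq\delta^2/\mu$ by the argument of part (1), while the $r$-gap, combining Assumption~\ref{assumption:r} with the distance bound just derived, is at most $\alpha(2\delta/\mu+M)+2\beta$. Summing these produces $(\delta^2+2\alpha\delta)/\mu+\alpha M+2\beta$.

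The only genuinely nontrivial step is the projection-and-triangle-inequality trick used to compare $\bar{\vx}$ with $\vx^*$ through $\chi^*_f$, which is also precisely what forces Assumption~\ref{assumption:minimizer set} into part (2) but not part (1). Everything else is routine bookkeeping once one commits to routing the $f$-estimates through a minimizer of $f$ rather than a minimizer of $F$.
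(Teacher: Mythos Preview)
Your proposal is correct and follows essentially the same approach as the paper: route the $f$-gap through $\chi_f^*$ via the PL inequality \eqref{eqn:PL} and its consequence \eqref{eqn:PL+QG}, and combine the two projection bounds with Assumption~\ref{assumption:minimizer set} through the triangle inequality for the distance estimate. The only minor difference is that the paper decomposes $f(\bar{\vx})-f(\vx^*)=(f(\bar{\vx})-f(\bar{\vx}_P))+(f(\vx^*_P)-f(\vx^*))$ and bounds each piece by $\delta^2/(2\mu)$, whereas you observe directly that $f(\bar{\vx})-f(\vx^*)\le f(\bar{\vx})-f(\bar{\vx}_P)\le\delta^2/(2\mu)$ and then relax to $\delta^2/\mu$; your observation is in fact slightly sharper but both arrive at the stated bound.
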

\begin{proof}
%For the first case $\alpha=0$,
To show part 1, we have
\begin{align*}
F(\bar{\vx})-F^*&=(f(\bar{\vx})-f(\vx^*))+(r(\bar{\vx})-r(\vx^*))\leq \max\limits_{\vx \in \mathbb{R}^n} \left(f(\bar{\vx})-f(\vx)\right) + 2\beta \\
&\overset{\eqref{eqn:PL}}{\leq} \frac{\|\nabla f(\bar{\vx})\|^2}{2\mu}+2\beta \leq \frac{\delta^2}{2\mu}+2\beta.
\end{align*}
Part 2: Since $f$ satisfies the PL inequality \eqref{eqn:PL} and $\bar{\vx}\in S_{\vQ}$, we have
\[d(\bar{\vx},\chi_f^*) \overset{\eqref{eqn:PL+QG}}{\leq} \frac{\|\nabla f(\bar{\vx})\|}{\mu} \overset{\eqref{eq:Q}}{\leq} \frac{\delta}{\mu}.\]
By choosing an $\vx^* \in \chi^*$ and noticing $\vx^* \in S_{\vQ}$, we also have $d(\vx^*,\chi_f^*) \leq \frac{\delta}{\mu}$ and thus
\[d(\bar{\vx},\chi^*) \leq d(\bar{\vx},\chi_f^*)+M+d(x^*,\chi_f^*)\leq \frac{2\delta}{\mu}+M.\]
Below we let $\bar{\vx}_{\text{P}}$ and $\vx^*_{\text{P}}$, respectively, denote the projections of $\bar{\vx}$ and $\vx^*$ onto the set $\chi^*_f$. Since $f(\bar{\vx}_{\text{P}})=f(\vx^*_{\text{P}})$, we obtain
\begin{align*}
F(\bar{\vx})-F^* & = \left(f(\bar{\vx})-f(\bar{\vx}_{\text{P}})\right)+\left(f(\vx^*_{\text{P}})-f(\vx^*)\right) + \left( r(\bar{\vx})-r(\vx^*)\right) \\
& \leq \frac{\|\nabla f(\bar{\vx})\|^2}{2\mu} + \frac{\|\nabla f(\vx^*)\|^2}{2\mu}+\alpha \|\bar{\vx}-\vx^*\|+2\beta \\
& \leq \frac{\delta^2+2\alpha \delta}{\mu}+\alpha M + 2\beta. &\text{\qed}
\end{align*}
\end{proof}
In the theorem above, we have constructed global optimality bounds for $\bar{\vx}$ obeying $\mathbf{Q}$. % \in S_f^{\delta}$.
In the next two subsections, we show that $R$-local minimizers, which include global minimizers, do obey $\mathbf{Q}$ under mild conditions. Hence, the bounds apply to any $R$-local minimizer. 

\subsection{R-local minimizers}
\label{section:R-local}
In this section, we define and analyze $R$-local minimizers. %The results generalize the result in the motivating example in our introduction. 
We discuss its blockwise version in section \ref{section:blockwise R-local}. Throughout this subsection, we assume that $R \in (0,\infty]$, and $B(\vx,R)$ is a \emph{closed} ball centered at $\vx$ with radius $R$.
\begin{defn}
The point $\bar{\vx}$ is called a standard $R$-local minimizer of $F$ if it satisfies
\begin{align}
F(\bar{\vx}) = \min\limits_{\vx \in B(\bar{\vx},R)} F(\vx).
\label{eq:define R local min}
\end{align}
%When there exists an $R$ such that the condition above holds, it is called a local minimizer.
\end{defn}
Obviously an $R$-local minimizer is a local minimizer, and when $R=\infty$, it is a global minimizer. Conversely, a global minimizer is always an $R$-local minimizer.

% \begin{defn}
% Suppose $\gamma>0$. The point $\bar{\vx}$ is called an $R$-local prox minimizer with parameter $\gamma$ if it satisfies
% \[F(\bar{\vx}) = \min\limits_{\vx \in B(\bar{\vx},R)} F(\vx) + \frac{\gamma}{2}\|\vx-\bar{\vx}\|^2,\]
% When $R=\infty$, $\bar{\vx}$ is called a prox minimizer with parameter $\gamma$. When there exists an $R$ such that the condition above holds, $\bar{\vx}$ is called a local prox minimizer with parameter $\gamma$.
% \end{defn}

%Following the definition, a global minimizer $\vx^*$ must be an $R$-local minimizer. Hence $\chi^* \subset S_{\vQ}$. 
We first bound the gradient of $f$ at an $R$-local minimizer so that $\vQ$ in \eqref{eq:Q} is satisfied. %We set $S_{\vQ}=\{\vx : \|\nabla f(\vx)\|\leq \delta \}$ in \ref{section:error bound}.

\begin{thm}
Suppose, in \eqref{eq:F=f+r}, $f$ and $r$ satisfy Assumptions \ref{assumption:f} and \ref{assumption:r}. Then, a point $\bar{\vx}$ obeys $\vQ$ in \eqref{eq:Q} with $\delta$ given in the following two cases: 
\begin{enumerate}
        \item $\delta=\alpha$ if $r$ is differentiable with $\alpha\ge 0$ and $\beta = 0$ in \eqref{assumption:r} and $\bar{\vx}$ is a stationary point of $F$;
        \item $\delta=\alpha + \max\{\frac{4\beta}{R},2\sqrt{\beta L}\}$ if $\bar{\vx}$ is a standard $R$-local minimizer of $F$. 
        % \item $\bar{\vx}$ is an $R$-local prox minimizer of $F$ with parameter $\gamma$, and $\delta \geq \alpha + \max\{\frac{4\beta}{R},2\sqrt{\beta (L+\gamma)} \}
\end{enumerate}
\label{thm:global case no split}
\end{thm}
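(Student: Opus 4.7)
The plan is to bound $\|\nabla f(\bar{\vx})\|$ in each case separately so that property $\vQ$ in \eqref{eq:Q} holds with the stated $\delta$. In both parts the common idea is that $\bar{\vx}$ is an (approximate) minimizer of $F=f+r$, so no direction of $f$-decrease can produce a net decrease in $F$, and this caps the size of $\nabla f(\bar{\vx})$.

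Part 1 should be almost immediate. With $\beta=0$, Assumption~\ref{assumption:r} reduces to $r$ being $\alpha$-Lipschitz, and since $r$ is also assumed differentiable this forces $\|\nabla r(\vx)\|\le\alpha$ everywhere. The stationarity condition $\nabla F(\bar{\vx})=0$ then gives $\nabla f(\bar{\vx})=-\nabla r(\bar{\vx})$, whence $\|\nabla f(\bar{\vx})\|\le\alpha=\delta$.

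Part 2 is the substantive case. I would probe $\bar{\vx}$ along the steepest descent direction of $f$: assuming $\nabla f(\bar{\vx})\ne 0$, set
\[
\vx_t := \bar{\vx} - t\,\nabla f(\bar{\vx})/\|\nabla f(\bar{\vx})\|, \qquad t\in(0,R],
\]
so that $\vx_t\in B(\bar{\vx},R)$. The descent lemma from Assumption~\ref{assumption:f} gives $f(\vx_t)\le f(\bar{\vx})-t\|\nabla f(\bar{\vx})\|+Lt^2/2$, while Assumption~\ref{assumption:r} gives $r(\vx_t)\le r(\bar{\vx})+\alpha t+2\beta$. Summing the two and invoking the $R$-local minimizer inequality $F(\bar{\vx})\le F(\vx_t)$ yields
\[
\|\nabla f(\bar{\vx})\| \;\le\; \alpha + \frac{Lt}{2}+\frac{2\beta}{t}.
\]
The degenerate case $\nabla f(\bar{\vx})=0$ satisfies the claimed bound trivially.

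The only real obstacle is optimizing the right-hand side $Lt/2+2\beta/t$ over $t\in(0,R]$, which is a routine AM--GM calculation with a case split. The unconstrained minimum sits at $t^\star=2\sqrt{\beta/L}$ with value $2\sqrt{\beta L}$. If $t^\star\le R$, picking $t=t^\star$ yields the bound $\alpha+2\sqrt{\beta L}$. Otherwise $t^\star>R$, which is equivalent to $2\sqrt{\beta L}<4\beta/R$; in this regime $Lt/2+2\beta/t$ is still decreasing on $(0,R]$, so I take $t=R$ and use $LR/2<2\beta/R$ to conclude $Lt/2+2\beta/t<4\beta/R$. Combining the two regimes gives $\|\nabla f(\bar{\vx})\|\le\alpha+\max\{2\sqrt{\beta L},\,4\beta/R\}$, which is the claimed $\delta$.
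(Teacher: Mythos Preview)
Your proof is correct and follows essentially the same approach as the paper: both probe $\bar{\vx}$ along the steepest-descent direction of $f$, combine the descent lemma with Assumption~\ref{assumption:r}, and then optimize over the step length $t\in(0,R]$ via a two-case split. The paper organizes the case split around whether the vertex $(\|\nabla f(\bar{\vx})\|-\alpha)/L$ of the resulting quadratic falls inside $[0,R]$, whereas you split on whether $t^\star=2\sqrt{\beta/L}$ lies in $(0,R]$; these are equivalent rearrangements of the same computation.
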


% Rewrite the proof

\begin{proof}%\label{proof:inequality condition}
Under the conditions in part 1, we have $\beta = 0$ and $\nabla F(\bar{\vx})=0$, so $\| \nabla f(\bar{\vx})\|=\|\nabla r(\bar{\vx})\| \leq \alpha=\delta$. Hence, $\vQ$ is satisfied. 

Under the conditions in part 2, $\bar{\vx}$ is an $R$-local minimizer of $F$; hence,
\begin{align}
&\min\limits_{\vx \in B(\bar{\vx},R)} \{f(\vx)-f(\bar{\vx})+r(\vx)-r(\bar{\vx})\}\geq 0 ,\notag \\
\overset{a)}{\Rightarrow} 
& \min\limits_{\vx \in B(\bar{\vx},R)} \{2\beta+ \alpha\|\vx-\bar{\vx} \|  +\dotp{\nabla f(\bar{\vx}),\vx-\bar{\vx}}+\frac{L}{2}\|\vx-\bar{\vx}\|^2\}\geq 0, \notag \\
\overset{b)}{\Leftrightarrow} 
& \min\limits_{\vx \in B(\bar{\vx},R)} \{2\beta+(\alpha-\|\nabla f(\bar{\vx})\|)\|\vx-\bar{\vx}\| + \frac{L}{2}\|\vx-\bar{\vx}\|^2\}\geq 0, \label{eqn:proof inequality condition}
\end{align}
where $a)$ is due to the assumption on $r$ and that $\nabla{f(\vx)}$ is $L$-Lipschitz continuous;
%and $b)$ is an identical transformation. 
$b)$ is because, as $\|\vx-\bar{\vx}\|$ is fixed, the minimum is attained with $\frac{\vx-\bar{\vx}}{\|\vx-\bar{\vx}\|}=-\frac{\nabla f(\vx)}{\|\nabla f(\vx)\|}$.
If $\|\nabla f(x)\|\le\alpha$, $\vQ$ is immediately satisfied. Now assume $\|\nabla f(x)\|>\alpha$. To simplify \eqref{eqn:proof inequality condition}, we only need to minimize a quadratic function of $\|\vx-\bar{\vx}\|$ over $[0,R]$. Hence, the objective equals
\begin{align}\label{eqn:LHS}
\begin{cases}
 2\beta+(\alpha-\|\nabla f(\bar{\vx})\|)R+\frac{L}{2}R^2, &\quad \text{if~}R\leq\frac{\|\nabla f(\bar{\vx})\|-\alpha}{L},\\
 2\beta-\frac{(\|\nabla f(\bar{\vx})\|-\alpha)^2}{2L}, &\quad \text{otherwise.}
\end{cases}
\end{align}
If $R\leq\frac{\|\nabla f(\bar{\vx})\|-\alpha}{L}$, from $2\beta+(\alpha-\|\nabla f(\bar{\vx})\|)R+\frac{L}{2}R^2\geq 0$, we get
\begin{align*}
&\|\nabla f(\bar{\vx})\|\leq \alpha+\frac{2\beta}{R}+\frac{LR}{2}\leq \alpha+\frac{2\beta}{R}+\frac{\|\nabla f(\bar{\vx})\|-\alpha}{2}\\
\Rightarrow~&\|\nabla f(\bar{\vx})\|\leq \alpha+\frac{4\beta}{R}.
\end{align*}
Otherwise, from $2\beta-\frac{(\|\nabla f(\bar{\vx})\|-\alpha)^2}{2L}\geq 0$, we get
\begin{align*}
 \|\nabla f(\bar{\vx})\|\leq\alpha+2\sqrt{\beta L}.
\end{align*}
Combining both cases, we have $\|\nabla f(\bar{\vx})\|\leq\alpha+\max\{\frac{4\beta}{R},2\sqrt{\beta L}\}=\delta$ and thus $\vQ$.
\qed
\end{proof}
The next result is a consequence of  part 2 of the theorem above. It presents the values of $R$ that ensure the escape from a non-global local minimizer. In addition, more distant local minimizers $\vx$ are easier to escape  in the sense that $R$ is roughly inversely proportional to $\|\nabla f(\vx)\|$.
%Following part 2 of the theorem above, we know that $R$ can be smaller when $\bar{\vx}$ is further away from $\chi_f^*$, so the inspection becomes easier. In addition, if we choose $R\geq 2\sqrt{\frac{\beta}{L}}$, then an $R$-local minimizer satisfies $\|\nabla f(\bar{\vx})\|\leq\delta=\alpha+2\sqrt{\beta L}$. This choice of $R$ can guarantee the best error bound of $\|\nabla f(\bar{\vx})\|$ in the worst case. These result is stated as:
\begin{corollary}\label{cor:global}
Let $\vx$ be a local minimizer of $F$ and $\|\nabla f(\vx)\|>\alpha+2\sqrt{\beta L}$. As long as either $R>\frac{4\beta}{\|\nabla f(\vx)\|-\alpha}$ or $R\ge 2\sqrt{\beta/L}$, there exists $\vy\in B(\vx,R)$ such that $F(\vy)<F(\vx)$. 
\end{corollary}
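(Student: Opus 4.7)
My plan is to derive Corollary \ref{cor:global} as the contrapositive of part 2 of Theorem \ref{thm:global case no split}. Specifically, suppose for contradiction that no such $\vy \in B(\vx, R)$ exists; then $\vx$ satisfies $F(\vx) = \min_{\vy \in B(\vx,R)} F(\vy)$, i.e., $\vx$ is a standard $R$-local minimizer. Applying part 2 of the theorem to $\vx$ would immediately yield
\[
\|\nabla f(\vx)\| \;\le\; \alpha + \max\!\left\{\tfrac{4\beta}{R},\;2\sqrt{\beta L}\right\}.
\]
The remainder of the argument is then just to verify that each of the two alternative hypotheses on $R$ forces the right-hand side above to be strictly smaller than $\|\nabla f(\vx)\|$, contradicting the inequality.

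For the second alternative, $R \ge 2\sqrt{\beta/L}$, I would observe that $\tfrac{4\beta}{R} \le 2\sqrt{\beta L}$, so the max collapses to $2\sqrt{\beta L}$, and the bound becomes $\|\nabla f(\vx)\| \le \alpha + 2\sqrt{\beta L}$, which directly contradicts the standing assumption $\|\nabla f(\vx)\| > \alpha + 2\sqrt{\beta L}$. For the first alternative, $R > \tfrac{4\beta}{\|\nabla f(\vx)\| - \alpha}$ (note $\|\nabla f(\vx)\| > \alpha$ since $\beta, L \ge 0$), rearranging yields $\alpha + \tfrac{4\beta}{R} < \|\nabla f(\vx)\|$. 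Combined with $\alpha + 2\sqrt{\beta L} < \|\nabla f(\vx)\|$, this gives
\[
\alpha + \max\!\left\{\tfrac{4\beta}{R},\,2\sqrt{\beta L}\right\} \;<\; \|\nabla f(\vx)\|,
\]
again a contradiction.

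There is no substantive obstacle here, since the corollary is essentially just a repackaging of part 2 of the preceding theorem: Theorem \ref{thm:global case no split} says any $R$-local minimizer must have a small gradient $\nabla f$, and the corollary says any local minimizer with a large gradient $\nabla f$ cannot be an $R$-local minimizer, so a strict descent direction exists within $B(\vx, R)$. The only mild care required is in handling the $\max$ correctly across the two disjunctive cases on $R$, which is why the statement separates them rather than giving a single unified threshold.
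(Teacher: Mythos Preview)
Your proof is correct and follows essentially the same contrapositive route as the paper: assume $\vx$ is an $R$-local minimizer, invoke part~2 of Theorem~\ref{thm:global case no split} to get $\|\nabla f(\vx)\|\le \alpha+\max\{4\beta/R,\,2\sqrt{\beta L}\}$, and derive a contradiction with the hypotheses on $R$ and $\|\nabla f(\vx)\|$. The only cosmetic difference is that the paper derives both $R<2\sqrt{\beta/L}$ and $R\le 4\beta/(\|\nabla f(\vx)\|-\alpha)$ simultaneously and then notes that either assumed lower bound on $R$ is violated, whereas you split into two cases upfront; the logic is the same.
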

\begin{proof}
Assume that the result does \emph{not} hold. Then, $\vx$ is an $R$-local minimizer of $F$. Applying part 2 of Theorem \ref{thm:global case no split}, we get $\|\nabla F(\vx)\|\le \delta = \alpha + \max\{\frac{4\beta}{R},2\sqrt{\beta L}\}$. Combining this with the assumption $\|\nabla f(\vx)\|>\alpha+2\sqrt{\beta L}$, we obtain
\begin{align*}
\alpha+2\sqrt{\beta L} < \|\nabla f(\vx)\| \le \alpha + \max\{\frac{4\beta}{R},2\sqrt{\beta L}\},
\end{align*}
from which we conclude $2\sqrt{\beta L}< \frac{4\beta}{R}$ and $\|\nabla f(\vx)\| \le \alpha + \frac{4\beta}{R}$; We have reached a contradiction.
\qed
\end{proof}
%\textbf{Following part 2, if $\bar{\vx}$ is a local minimizer of $F$ that satisfies $\nabla f(\bar{\vx})>\alpha+2\sqrt{\beta L}$, then for $R>\frac{4\beta}{\|\nabla f(\bar{\vx})\|-\alpha}$, there exits $\vx\in B(\bar{\vx},R)$ such that $F(\vx)<F(\bar{\vx})$. That is to say, when a local minimizer is far from $\chi^*$, we only need to inspect a small region to escape.}

We can further increase $R$ to ensure that any $R$-local minimizer $\bar{\vx}$ is a global minimizer.% Combined with part 2 in Theorem \ref{thm:error bound}, we have:
\begin{thm}\label{thm:global opt global}
Under Assumptions \ref{assumption:f}, \ref{assumption:PL inequality} and \ref{assumption:r} and $R\ge 2\sqrt{\beta/L}$, we have $d(\bar{\vx},\chi^*) \le 2\tfrac{\alpha+2\sqrt{\beta L}}{\mu}+M$ for any $R$-local minimizer $\bar{\vx}$. 
Therefore, if $R\ge 2\tfrac{\alpha+2\sqrt{\beta L}}{\mu}+M$, any $R$-local minimizer $\bar{\vx}$ is a global minimizer.
% \begin{align*}
% R \geq \max\big\{d(\bar{\vx},\chi^*),2\sqrt{\tfrac{\beta}{L}}\big\}
% \end{align*}
% then $\bar{\vx}$ is a global minimizer.
% In addition, in the right-hand side, 
\end{thm}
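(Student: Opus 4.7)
The plan is to obtain both conclusions as direct consequences of the two preceding results, Theorem~\ref{thm:global case no split} and Theorem~\ref{thm:error bound}, with the only nontrivial step being a simplification of the $\max\{\cdot,\cdot\}$ expression using the hypothesis $R\ge 2\sqrt{\beta/L}$.

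First I would handle the distance bound. Under $R\ge 2\sqrt{\beta/L}$ we have $\frac{4\beta}{R}\le 2\sqrt{\beta L}$, so the bound from part 2 of Theorem~\ref{thm:global case no split} collapses to
\begin{align*}
\|\nabla f(\bar{\vx})\|\ \le\ \alpha+\max\Bigl\{\tfrac{4\beta}{R},\,2\sqrt{\beta L}\Bigr\}\ =\ \alpha+2\sqrt{\beta L}\ =:\ \delta.
\end{align*}
Thus $\bar{\vx}\in S_{\vQ}$ for this choice of $\delta$. Moreover, every $\vx^*\in\chi^*$ is itself an $R$-local minimizer (a global minimizer is an $R$-local minimizer for any $R$), so by the same argument $\chi^*\subseteq S_{\vQ}$, which is precisely the hypothesis of Theorem~\ref{thm:error bound}. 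Applying part 2 of that theorem then yields
\begin{align*}
d(\bar{\vx},\chi^*)\ \le\ \tfrac{2\delta}{\mu}+M\ =\ 2\tfrac{\alpha+2\sqrt{\beta L}}{\mu}+M,
\end{align*}
which is the first claim.

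Next I would deduce the global-optimality statement. If $R\ge 2\tfrac{\alpha+2\sqrt{\beta L}}{\mu}+M$, I first verify that this already forces $R\ge 2\sqrt{\beta/L}$ (using $\mu\le L$, which is standard under Assumptions~\ref{assumption:f} and~\ref{assumption:PL inequality}), so the first part applies. Then by the distance bound, there exists $\vx^*\in\chi^*$ with $\|\bar{\vx}-\vx^*\|\le R$, i.e.\ $\vx^*\in B(\bar{\vx},R)$. Since $\bar{\vx}$ is an $R$-local minimizer,
\begin{align*}
F(\bar{\vx})\ \le\ F(\vx^*)\ =\ F^*,
\end{align*}
so $\bar{\vx}$ itself lies in $\chi^*$.

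I do not anticipate a real obstacle here; the proof is essentially a packaging result. The only detail that warrants care is the reduction of the $\max$ expression and the implicit check $\mu\le L$ so that the larger radius assumption of the second claim entails the smaller radius assumption of the first.
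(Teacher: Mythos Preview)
Your proposal is correct and follows essentially the same route as the paper: invoke part~2 of Theorem~\ref{thm:global case no split} with $R\ge 2\sqrt{\beta/L}$ to collapse the $\max$ to $\delta=\alpha+2\sqrt{\beta L}$, feed this into part~2 of Theorem~\ref{thm:error bound} for the distance bound, then use $\mu\le L$ to check that the second radius hypothesis implies the first and conclude by trapping a global minimizer inside $B(\bar{\vx},R)$. Your explicit verification that $\chi^*\subseteq S_{\vQ}$ (needed as a hypothesis of Theorem~\ref{thm:error bound}) is a nice touch that the paper leaves implicit.
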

\begin{proof}
According to Theorem~\ref{thm:error bound}, part 2, and Theorem~\ref{thm:global case no split}, part 2,
\begin{align*}
  d(\bar{\vx},\chi^*)\leq 2\frac{\delta}{\mu}+M\leq 2\frac{\alpha+2\sqrt{\beta L}}{\mu}+M,
\end{align*}
where, for the latter inequality, we have used $R\ge 2\sqrt{\beta/L}$ and thus $\max\{4\beta/R, 2\sqrt{\beta L}\} = 2\sqrt{\beta L}$. By convex analysis on $f$, we have $\mu\le L$. Using it with $\alpha\ge 0$ and $M\ge 0$, we further get $2\tfrac{\alpha+2\sqrt{\beta L}}{\mu}+M\ge 4\sqrt{\beta L}/\mu \ge 4\sqrt{\beta L}/L \ge 2\sqrt{\beta/L}$. Therefore, if $R\ge 2\tfrac{\alpha+2\sqrt{\beta L}}{\mu}+M$, then there exists $\vx^*\in\chi^*$ such that $\vx^*\in B(\bar{\vx},R)$. Being an $R$-local minimizer means $\bar{\vx}$ satisfies $F(\bar{\vx})\le F(\vx^*)$, so $\bar{\vx}$ is a global minimizer.
%As long as we choose $R\geq \min\{d(\bar{\vx},\chi^*),2\sqrt{\frac{\beta}{L}}\}$, $\bar{\vx}$ is a global minimizer. %The right inequality implies that $R$ need not be too large. 
\qed
\end{proof}
\begin{rem}
Since the decomposition \eqref{eq:F=f+r} is implicit, the constants in our analysis are difficult to estimate in practice. However, if we have a rough estimate of the distance between the global minimizer and its nearby local minimizers, then this distance appears to be a good empirical choice for $R$.
\end{rem}

\subsection{Blockwise $\vR$-local minimizers}
\label{section:blockwise R-local}
In this section, we focus on problem \eqref{eq:minF}. This blockwise structure of $F$ motivates us to consider blockwise algorithms. Suppose $\mathbf{R} \in \mathbb{R}^s $  and $\mathbf{R}=(R_1,...,R_s)\geq 0 $. When we fix all blocks but $x_i$, we write $F(\bar{x}_1,...,x_i,...,\bar{x}_s)$ as $F(x_i,\bar{\vx}_{-i})$. 

\begin{defn}
A point $\bar{\vx}$ is called a blockwise $\mathbf{R}$-local minimizer of $F$ if it satisfies
\[F(\bar{x}_i,\bar{\vx}_{-i}) = \min\limits_{x_i \in B(\bar{x}_i,R_i)}F(x_i,\bar{\vx}_{-i}) ,\quad  1\leq i \leq s,\]
where $F(\bar{\vx})=F(\bar{x}_i,\bar{\vx}_{-i})$.
\end{defn}
When $\mathbf{R}=\infty$, $\bar{\vx}$ is known as a Nash equilibrium point of $F$.

% \begin{defn}
% Suppose $\mathbf{\gamma} \in \mathbb{R}^s$ and $\mathbf{\gamma}=(\gamma_1,...,\gamma_s)\geq 0$. The point $\bar{\vx}$ is called a blockwise $\mathbf{R}$-local prox-minimizer of $F$ with $\mathbf{\gamma}$ if it satisfies
% \[F(\bar{x}_i,\bar{\vx}_{-i}) = \min\limits_{x_i \in B(\bar{x}_i,R_i)} F(x_i,\bar{\vx}_{-i})+\frac{\gamma_i}{2}\|x_i-\bar{x}_i\|^2,\quad  1\leq i \leq s.\]
% When $\mathbf{R}=\infty$, $\bar{\vx}$ is called a blockwise prox-minimizer of $F$.
% \end{defn}

We can obtain similar estimates on the gradient of $f$ for blockwise R-local minimizers. Recall that $S_{\vQ}=\{\vx : \|\nabla f(\vx)\|\leq \delta \}$.
\begin{thm}
Suppose $f$ and $r$ satisfy Assumptions \ref{assumption:f} and \ref{assumption:r}. If $\bar{\vx}$ is a blockwise $\mathbf{R}$-local minimizer of $F$, then $\bar{\vx} \in S_{\vQ}$ (i.e, the property $\vQ$ is met) for $\delta = \|\mathbf{v}\|\coloneqq(\sum|v_i^2|)^{\frac{1}{2}}$ where $v_i:=\alpha+\max\{\frac{4\beta}{R_i},2\sqrt{\beta L} \}$, $1 \leq i \leq s$. 

        % \item The point $\bar{\vx}$ is a blockwise $\mathbf{R}$-local prox minimizer of $F$ with parameter $\mathbf{\gamma}$, for $v_i=\alpha+\max\{\frac{4\beta}{R_i},2\sqrt{\beta (L+\gamma_i)} \}$, $1 \leq i \leq s$, $\delta \geq \|\mathbf{v}\|$. 
\label{thm:blockwise nosplit}
\end{thm}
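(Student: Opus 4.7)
The plan is to reduce the blockwise claim to a coordinate-by-coordinate application of part 2 of Theorem~\ref{thm:global case no split}, and then recombine the resulting per-block gradient bounds via the Pythagorean identity $\|\nabla f(\bar{\vx})\|^2 = \sum_i \|\nabla_{x_i} f(\bar{\vx})\|^2$. The key observation is that a blockwise $\mathbf{R}$-local minimizer is, by definition, a standard $R_i$-local minimizer of the restricted function $x_i \mapsto F(x_i, \bar{\vx}_{-i})$ for each $i$, so Theorem~\ref{thm:global case no split} applies to each restriction.

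First, I would fix an arbitrary index $i$ and introduce the restricted functions $\tilde F_i(x_i) := F(x_i, \bar{\vx}_{-i})$, $\tilde f_i(x_i) := f(x_i, \bar{\vx}_{-i})$, and $\tilde r_i(x_i) := r(x_i, \bar{\vx}_{-i})$. I then need to verify that these restrictions inherit Assumptions~\ref{assumption:f} and~\ref{assumption:r} with the \emph{same} constants $L$, $\alpha$, $\beta$. For $\tilde f_i$, the gradient is $\nabla_{x_i} f(x_i, \bar{\vx}_{-i})$, which is a sub-vector of $\nabla f$ evaluated along a line in $\mathbb{R}^n$; since $\nabla f$ is globally $L$-Lipschitz, so is this partial gradient restricted to $x_i$. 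For $\tilde r_i$, note that if only the $i$-th block varies between two points $\vx$ and $\vy$, then $\|\vx - \vy\| = \|x_i - y_i\|$, so the oscillation bound $|\tilde r_i(x_i) - \tilde r_i(y_i)| \le \alpha \|x_i - y_i\| + 2\beta$ follows immediately from Assumption~\ref{assumption:r}.

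Next, I would apply part 2 of Theorem~\ref{thm:global case no split} to $\tilde F_i = \tilde f_i + \tilde r_i$ at the $R_i$-local minimizer $\bar x_i$. This yields
\begin{equation*}
\|\nabla_{x_i} f(\bar{\vx})\| \;=\; \|\nabla \tilde f_i(\bar x_i)\| \;\le\; \alpha + \max\!\left\{\tfrac{4\beta}{R_i},\,2\sqrt{\beta L}\right\} \;=\; v_i.
\end{equation*}
Summing the squares over $i = 1, \dots, s$ and using $\|\nabla f(\bar{\vx})\|^2 = \sum_{i=1}^s \|\nabla_{x_i} f(\bar{\vx})\|^2$, I obtain $\|\nabla f(\bar{\vx})\|^2 \le \sum_i v_i^2 = \|\mathbf{v}\|^2$, i.e., $\|\nabla f(\bar{\vx})\| \le \delta$, which is exactly $\mathbf{Q}$.

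The reasoning above is essentially bookkeeping once one believes Theorem~\ref{thm:global case no split}, so I do not expect a genuine mathematical obstacle; the only subtle point is the verification that the restricted gradient Lipschitz constant and the restricted oscillation constants do not blow up when passing to a single block. This is immediate here because both assumptions are stated via norm inequalities that only become tighter under restriction to an affine subspace (the norm of a sub-vector is bounded by the norm of the full vector, and restricting the argument only shrinks the distance $\|\vx - \vy\|$). Consequently, the same $L, \alpha, \beta$ are valid per block, and the per-block bound $v_i$ inherits precisely the form in the theorem statement.
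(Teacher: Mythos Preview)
Your proposal is correct and follows essentially the same approach as the paper: apply part~2 of Theorem~\ref{thm:global case no split} to each block restriction $x_i \mapsto F(x_i,\bar{\vx}_{-i})$ to obtain $\|\nabla_i f(\bar{\vx})\| \le v_i$, then combine via $\|\nabla f(\bar{\vx})\|^2 = \sum_i \|\nabla_i f(\bar{\vx})\|^2$. Your explicit verification that the restricted functions inherit Assumptions~\ref{assumption:f} and~\ref{assumption:r} with the same constants is a useful elaboration that the paper leaves implicit.
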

\begin{proof}
$\bar{x}_i$ is an $R_i$-local minimizer of $F(x_i,\bar{\vx}_{-i})$.
Since $F(x_i,\bar{\vx}_{-i})=f(x_i,\bar{\vx}_{-i})+r(x_i,\bar{\vx}_{-i})$ and $f(x_i,\bar{\vx}_{-i}) $ and $ r(x_i,\bar{\vx}_{-i})$ satisfy Assumption \ref{assumption:f} and Assumption \ref{assumption:r}, Theorem \ref{thm:global case no split} shows that
$\|\nabla_i f(\bar{x}_i,\bar{\vx}_{-i})\|\leq \alpha+\max\{\frac{4\beta}{R_i},2\sqrt{\beta L} \}= v_i. $ Hence $\|\nabla f(\bar{\vx})\|\leq \|\mathbf{v}\|$.
\qed

% The second case follows similarly. 
\end{proof}

\begin{rem}
We can also obtain a simplified version of Theorem \ref{thm:blockwise nosplit}, which is $$\|\nabla f(\bar{\vx})\|\leq \delta := \sqrt{s} \left(\alpha+\max\{\frac{4\beta}{\min_i R_i},2\sqrt{\beta L}\} \right).$$ The main difference between the standar and blockwise estimates is the extra factor $\sqrt{s}$ in the latter.
\end{rem}

\begin{rem}
Since we can set $R=\infty$, our results apply to Nash equilibrium points.
% , the blockwise prox minimizer and the blockwise prox-linear minimizer. 
\end{rem} 

% \begin{rem}
% We can require different $R$-local types of minimizers in different blocks and construct a blockwise hybrid $R$-local solution. This idea works well when different blocks are suitable for different kinds of operations. 
% \end{rem}

Generalized from Corollary \ref{cor:global}, the following result provides estimates of $R_i$ for escaping from non-global local minimizers. The estimates are smaller when $\nabla_i f$ are larger.
\begin{corollary}\label{cor:blockwise}
Let $\vx$ be a local minimizer of $F$ and $\|\nabla_i f(x_i,\vx_{-i})\|>\alpha+2\sqrt{\beta L}$ for some $i$. As long as $R_i>\frac{4\beta}{\|\nabla_i f(x_i,\vx_{-i})\|-\alpha}$, there exists $y\in B(x_i,R_i)$, such that $F(y,\vx_{-i})<F(x_i,\vx_{-i})$.
\end{corollary}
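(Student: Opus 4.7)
The plan is to mirror the proof of Corollary \ref{cor:global}, but applied to the one-block restriction of $F$ obtained by fixing every block except the $i$th. Argue by contradiction: assume that no $y\in B(x_i,R_i)$ satisfies $F(y,\vx_{-i})<F(x_i,\vx_{-i})$. Then $x_i$ is a standard $R_i$-local minimizer of the single-variable function $G(\cdot):=F(\cdot,\vx_{-i})=f(\cdot,\vx_{-i})+r(\cdot,\vx_{-i})$.

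The next step is to verify that this restricted decomposition inherits Assumptions \ref{assumption:f} and \ref{assumption:r} with the same constants $L$, $\alpha$, $\beta$: fixing coordinates cannot increase the Lipschitz constant of the gradient of $f$, nor the oscillation bound on $r$, and moreover $\nabla G_f(x_i) = \nabla_i f(x_i,\vx_{-i})$ where $G_f$ denotes the smooth summand of $G$. Hence we may apply part 2 of Theorem \ref{thm:global case no split} to $G$, yielding
\begin{align*}
\|\nabla_i f(x_i,\vx_{-i})\| \;\le\; \alpha + \max\Big\{\tfrac{4\beta}{R_i},\,2\sqrt{\beta L}\Big\}.
\end{align*}

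Finally, combine this bound with the hypothesis $\|\nabla_i f(x_i,\vx_{-i})\|>\alpha+2\sqrt{\beta L}$. The second candidate in the maximum cannot attain the bound, so we must have $\max\{4\beta/R_i,\,2\sqrt{\beta L}\}=4\beta/R_i$, and therefore
\begin{align*}
\|\nabla_i f(x_i,\vx_{-i})\| - \alpha \;\le\; \tfrac{4\beta}{R_i}, \quad\text{i.e.,}\quad R_i \;\le\; \tfrac{4\beta}{\|\nabla_i f(x_i,\vx_{-i})\|-\alpha},
\end{align*}
contradicting the assumed lower bound on $R_i$.

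The argument is almost entirely a bookkeeping reduction to the single-block case, so the only real (and minor) obstacle is the verification that the restricted problem legitimately inherits Assumptions \ref{assumption:f} and \ref{assumption:r}, and that the gradient of its smooth part coincides with the partial gradient $\nabla_i f$. Once that is in place, Theorem \ref{thm:global case no split} does all the analytic work and the contradiction is immediate.
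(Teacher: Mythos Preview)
Your proposal is correct and follows essentially the same approach as the paper: the paper does not spell out a separate proof for Corollary~\ref{cor:blockwise} but simply presents it as ``generalized from Corollary~\ref{cor:global},'' and the reduction you use---fixing all blocks but the $i$th so that $f(\cdot,\vx_{-i})$ and $r(\cdot,\vx_{-i})$ inherit Assumptions~\ref{assumption:f} and~\ref{assumption:r}, then invoking Theorem~\ref{thm:global case no split}---is precisely the mechanism already employed in the proof of Theorem~\ref{thm:blockwise nosplit}. Your contradiction argument then matches the proof of Corollary~\ref{cor:global} line for line.
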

The theorem below, which follows from Theorems~\ref{thm:error bound} and~\ref{thm:blockwise nosplit}, bounds the distance of an $\vR$-local minimizer to the set of global minimizers. We do not have a vector $\vR$ to ensure the global optimality of $\bar{\vx}$ due to the blockwise limitation. Of course, after reaching $\bar{\vx}$, if we switch to standard (non-blockwise) inspection to obtain an standard $R$-local minimizer, we will be able to apply Theorem~\ref{thm:global opt global}.
\begin{thm}\label{thm:global opt blockwise}
Suppose $f$ and $r$ satisfy Assumptions \ref{assumption:f}--\ref{assumption:r}. If $\bar{\vx}$ is a blockwise $\mathbf{R}$-local minimizer of $F$, then
\begin{align*}
%R \geq \max\big\{d(\bar{\vx},\chi^*),2\sqrt{\tfrac{\beta}{L}}\big\}
d(\bar{\vx},\chi^*)\leq \tfrac{2\sqrt{s}}{\mu} \left(\alpha+\max\{\frac{4\beta}{\min_i R_i},2\sqrt{\beta L}\} \right)+M.
\end{align*}
%then $\bar{\vx}$ is a global minimizer.
%The right-hand side is no larger than $\max\{\tfrac{2\sqrt{s}}{\mu} \left(\alpha+\max\{\frac{4\beta}{\min_i R_i},2\sqrt{\beta L}\} \right)+M,2\sqrt{\tfrac{\beta}{L}}\}$.
\end{thm}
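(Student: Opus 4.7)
The target bound is a direct composition of two results already proved in the excerpt: Theorem~\ref{thm:blockwise nosplit} (which turns the blockwise $\vR$-local minimality into a gradient bound on $f$) and Theorem~\ref{thm:error bound} part~2 (which turns a gradient bound on $f$ into a distance-to-$\chi^*$ bound). So my plan is essentially to chain these two results together using the simplified form of the blockwise gradient estimate noted in the remark after Theorem~\ref{thm:blockwise nosplit}.

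First I would invoke the simplified blockwise gradient estimate: under Assumptions~\ref{assumption:f} and~\ref{assumption:r}, any blockwise $\vR$-local minimizer $\bar{\vx}$ obeys $\vQ$ in \eqref{eq:Q} with
\[
\delta \;=\; \sqrt{s}\,\Bigl(\alpha+\max\bigl\{\tfrac{4\beta}{\min_i R_i},\,2\sqrt{\beta L}\bigr\}\Bigr).
\]
This is immediate by applying Theorem~\ref{thm:blockwise nosplit} and bounding $v_i\le \alpha+\max\{4\beta/\min_j R_j,2\sqrt{\beta L}\}$ for each $i$, so that $\|\mathbf{v}\|\le \sqrt{s}\cdot\max_i v_i$ gives the claimed $\delta$.

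Next I would verify that the hypothesis of Theorem~\ref{thm:error bound} part~2 holds, namely that \emph{every} global minimizer of $F$ also satisfies $\vQ$ with the same $\delta$. This is automatic: any $\vx^*\in\chi^*$ is in particular a blockwise $\vR$-local minimizer of $F$ (it globally minimizes $F$ in every direction), hence the estimate just derived applies to $\vx^*$ as well. Thus the set $S_\vQ$ constructed with this $\delta$ contains $\chi^*$, and Theorem~\ref{thm:error bound} part~2 yields
\[
d(\bar{\vx},\chi^*)\;\le\;\tfrac{2\delta}{\mu}+M.
\]
Substituting the expression for $\delta$ produces exactly the asserted bound.

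I do not see a genuine obstacle here; the only thing requiring a bit of care is making sure one uses the right form of Theorem~\ref{thm:blockwise nosplit}. If one used the tighter $\delta=\|\mathbf{v}\|$ directly, the final bound would be expressed in terms of $\|\mathbf{v}\|$ rather than the cleaner $\sqrt{s}(\alpha+\max\{4\beta/\min_i R_i,2\sqrt{\beta L}\})$; the statement of Theorem~\ref{thm:global opt blockwise} uses the simplified form, so one should explicitly cite the remark that provides it. Everything else is routine substitution, and no new assumption beyond Assumptions~\ref{assumption:f}--\ref{assumption:r} plus Assumption~\ref{assumption:minimizer set} (which supplies $M$) is needed.
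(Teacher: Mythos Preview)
Your proposal is correct and matches the paper's own approach: the paper simply states that Theorem~\ref{thm:global opt blockwise} ``follows from Theorems~\ref{thm:error bound} and~\ref{thm:blockwise nosplit}'' without further elaboration, and your write-up fills in exactly this chaining (including the use of the simplified $\sqrt{s}$ form from the remark). Your observation that Assumption~\ref{assumption:minimizer set} is implicitly needed to invoke part~2 of Theorem~\ref{thm:error bound} and obtain the $M$ term is a valid clarification that the paper's statement glosses over.
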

%Note that we do not have a global optimality condition for blockwise $R$-local minimizers like that in Theorem \ref{thm:global opt global}, because we always define a ball within some block instead of the whole space. However in our numerical experiments, the blockwise $R$-local minimizers are exactly global minimizers in most cases with good choice of $R$.
\subsection{Run-and-Inspect Method}
\label{section:algorithm}
In this section, we introduce our Run-and-Inspect Method. The ``run'' phase can use any algorithm that monotonically converges to an approximate stationary point. %We add an inspection process to existing algorithms.
When the algorithm stops at either an approximate local minimizer or a saddle point, our method starts its ``inspection'' phase, which either moves to a strictly better point or verifies that the current point is an approximate (blockwise) $R$-local minimizer. %A carefully designed inspection process can guarantee us to find an approximate blockwise $\mathbf{R}$-local minimizer efficiently. %Each time when we obtain a solution but do not know how this solution behaves, we inspect to see whether we can move to another better point.

% In the second part, we design blockwise update schemes and following this scheme the iterates can converge to a blockwise $\mathbf{R}$-local solution. These updating rules generalize their counterparts in convex case and can be very useful when the subproblem is cheap to solve. 

\subsubsection{Approximate $R$-local minimizers}
We define \emph{approximate} $R$-local minimizers. Since an $R$-local minimizer is a special case of a blockwise $\vR$-local minimizer, we only deal with the latter. Let $\vx = [x_1^T\cdots x_s^T]^T$. A point $\bar{\vx}$ is called a blockwise $\vR$-local minimizer of $F$ \emph{up to $\eta=[\eta_1\cdots\eta_s]^T\geq 0$} if it satisfies
\[ F(\bar{x}_i,\bar{\vx}_{-i}) \leq \min\limits_{x_i \in B(\bar{x}_i,R_i)}F(x_i,\bar{\vx}_{-i})+\eta_i, \quad  1\leq i \leq s;\]
%Also we are able to prove its optimality when $F$ enjoys the ``convex+nice nonconvex'' structure. 
when $s=1$, we say $\bar{\vx}$ is an $R$-local minimizer of $F$ up to $\eta$. It is easy to modify the proof of Theorem \ref{thm:global case no split} to get:

\begin{thm}
Suppose $f$ and $r$ satisfy Assumptions \ref{assumption:f} and \ref{assumption:r}. Then $\bar{\vx} \in S_{\vQ}$ if $\bar{\vx}$ is a blockwise $\vR$-local minimizer of $F$ up to $\eta$ and $\delta \geq \|\mathbf{v}\|\coloneqq(\sum|v_i^2|)^{\frac{1}{2}}$ for $v_i=\alpha+\max\{\frac{4\beta+2\eta_i}{R_i},\sqrt{(4\beta+2\eta_i)L}\}$, $1\leq i\leq s$.
\label{thm:approx blockwise R-local}
\end{thm}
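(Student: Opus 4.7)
The plan is to run the proof of Theorem \ref{thm:global case no split}, part 2, one block at a time exactly as in Theorem \ref{thm:blockwise nosplit}, but with the slack $\eta_i$ absorbed into what was previously the $2\beta$ constant. Concretely, I would fix a block index $i$ and look at the single-block subproblem $x_i\mapsto F(x_i,\bar{\vx}_{-i})$. The approximate blockwise $\vR$-local minimizer condition says
\[
\min_{x_i\in B(\bar{x}_i,R_i)}\bigl\{F(x_i,\bar{\vx}_{-i})-F(\bar{\vx})\bigr\}\ge -\eta_i,
\]
so the only change from the exact case is that the right-hand side is $-\eta_i$ instead of $0$.

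Next, applying the $L$-smoothness of $f$ (Assumption \ref{assumption:f}) and the oscillation-Lipschitz bound on $r$ (Assumption \ref{assumption:r}) along the $i$-th block, and moving $\eta_i$ to the left-hand side, I would arrive at the analogue of \eqref{eqn:proof inequality condition}:
\[
\min_{x_i\in B(\bar{x}_i,R_i)}\bigl\{(2\beta+\eta_i)+(\alpha-\|\nabla_i f(\bar{\vx})\|)\|x_i-\bar{x}_i\|+\tfrac{L}{2}\|x_i-\bar{x}_i\|^2\bigr\}\ge 0,
\]
where, as in the original proof, I have assumed $\|\nabla_i f(\bar{\vx})\|>\alpha$ (otherwise the desired bound on this block is immediate) and chosen $x_i-\bar{x}_i$ anti-parallel to $\nabla_i f(\bar{\vx})$. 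This is exactly the inequality from Theorem \ref{thm:global case no split}, part 2, with $\beta$ replaced by $\beta+\eta_i/2$.

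The proof of Theorem \ref{thm:global case no split} then applies verbatim: minimizing the quadratic in $t=\|x_i-\bar{x}_i\|$ over $[0,R_i]$ and splitting into the two cases according to whether the unconstrained minimizer $t^*=(\|\nabla_i f(\bar{\vx})\|-\alpha)/L$ lies in $[0,R_i]$, I obtain either
\[
\|\nabla_i f(\bar{\vx})\|\le \alpha+\tfrac{4(\beta+\eta_i/2)}{R_i}=\alpha+\tfrac{4\beta+2\eta_i}{R_i}
\]
or
\[
\|\nabla_i f(\bar{\vx})\|\le \alpha+2\sqrt{(\beta+\eta_i/2)L}=\alpha+\sqrt{(4\beta+2\eta_i)L},
\]
so in either case $\|\nabla_i f(\bar{\vx})\|\le v_i$.

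Finally, summing the squares over $i$ gives $\|\nabla f(\bar{\vx})\|^2=\sum_i\|\nabla_i f(\bar{\vx})\|^2\le\sum_i v_i^2=\|\mathbf{v}\|^2$, so any $\delta\ge\|\mathbf{v}\|$ certifies $\bar{\vx}\in S_{\vQ}$. There is no real obstacle here beyond bookkeeping: the only step that needs a sanity check is the algebraic simplification $2\sqrt{(\beta+\eta_i/2)L}=\sqrt{(4\beta+2\eta_i)L}$ that produces the slightly unusual form of $v_i$ in the theorem statement.
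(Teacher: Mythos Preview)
Your proposal is correct and is exactly the modification the paper has in mind: it omits the proof, saying only that it is an easy modification of Theorem~\ref{thm:global case no split}, and your blockwise application of that argument with $\beta$ replaced by $\beta+\eta_i/2$ is precisely that modification. The algebraic check $2\sqrt{(\beta+\eta_i/2)L}=\sqrt{(4\beta+2\eta_i)L}$ is sound, and summing the blockwise bounds as in Theorem~\ref{thm:blockwise nosplit} yields the claim.
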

Whenever the condition $\bar{\vx} \in S_{\vQ}$ holds, our previous results for blockwise $\vR$-local minimizers are applicable.
% If the function $F=f+r$ satisfies Assumptions \ref{assumption:f} and \ref{assumption:r} and $\bar{\vx}$ is a blockwise $\vR$-local minimizer of $F$ up to $\eta\ge 0$, then we have $\|\nabla f(\bar{\vx})\|\leq \|\mathbf{v}\|$ for $v_i=\alpha+\max\{\frac{4\beta+2\eta_i}{R_i},\sqrt{(4\beta+2\eta_i)L}\}$, $1\leq i\leqs$.
% \label{thm:approximate R-local}
% \end{thm}
% The proof is similar to the one in \ref{thm:global case no split}. With the bound on the gradient, we can use the result in section \ref{section:error bound} to derive an estimate of this approximate solution.

% \begin{proof}
% We have for $1 \leq i \leq s$
% \begin{equation*}
% \begin{aligned}
% &\min\limits_{x_i \in B(\bar{x}_i,R_i)} \eta_i+\dotp{ \nabla_i F(\bar{x}_i,\bar{\vx}_{-i}),x_i-\bar{x}_i } +\frac{L}{2}\|x_i-\bar{x}_i\|^2 \geq 0 ,\\
% \Rightarrow & \min\limits_{x_i \in B(\bar{x}_i,R_i)} \eta_i+2\beta+\alpha\|x_i-\bar{x}_i\|+\dotp{ \nabla_i f(\bar{x}_i,\bar{\vx}_{-i}),x_i-\bar{x}_i } +\frac{L}{2}\|x_i-\bar{x}_i\|^2 \geq 0.
% \end{aligned}
% \end{equation*}
% Applying the technique in the proof of Theorem \ref{thm:global case no split} we obtain
% \[\|\nabla_i f(\bar{x}_i,\bar{\vx}_{-i})\|\leq \alpha+\sqrt{(4\beta+2\eta_i)L},\]
% and thus $\|\nabla f(\bar{\vx})\|\leq \sqrt{s}\left(\alpha+\sqrt{(4\beta+2\|\eta\|_{\infty})L}\right).$
% \end{proof}
\subsubsection{Algorithms}
Now we present two algorithms based on our Run-and-Inspect Method. Suppose that we have implemented an algorithm and it returns a point $\bar{\vx}$. For simplicity let $\mathbf{Alg}$ denote this algorithm. To verify the global optimality of $\bar{\vx}$, we seek to inspect $F$ around $\bar{\vx}$ by sampling some points. Since a global search is apparently too costly, the inspection is limited in a ball centered at $\bar{\vx}$, and for high-dimensional problems, further limited to lower-dimensional balls. %blockwise $\vR$-local minimizer can be achieved.

The inspection strategy is to sample some points in the ball around the current point and stop whenever either a better point is found or it finishes the last point. By ``better", we mean the objective value decreases by at least a constant amount $\nu>0$.
We call this $\nu$ descent threshold. If a better point is found, we resume $\mathbf{Alg}$ at that point. If no better point is found, the current point is an $R$-local or $\vR$-local minimizer of $F$ up to $\eta$, where $\eta$ depends on the density of sample points and the Lipschitz constant of $F$ in the ball. 
\begin{algorithm}[H]
\caption{Run-and-Inspect Method} 
\begin{algorithmic}[1]
\STATE Set $k=0$ and choose $\vx^0 \in \mathbb{R}^n$;\\
\STATE Choose the descent threshold $\nu > 0$;
%\STATE An existing algorithm $\mathbf{Alg}$;
\LOOP
    \STATE $\bar{\vx}^k=\mathbf{Alg}(\vx^k)$;
    \STATE Generate a set $\cS$ of sample points in $B(\bar{\vx}^k,R)$;
    \IF {there exists $\vy\in\cS$ such that $F(\vy) < F(\bar{\vx}^k)-\nu$}
        \STATE $\vx^{k+1}=\vy$;
        \STATE $k=k+1$;
    \ELSE
        \STATE \textbf{stop} and \textbf{return} $\bar{\vx}^k$;%$\vx^{k+1}\leftarrow\bar{\vx}^k$;
    \ENDIF
\ENDLOOP
\end{algorithmic}
\label{algorithm:inspecting process}
\end{algorithm}
If $\mathbf{Alg}$ is a descent method, i.e., $F(\bar{\vx}^k)\leq F(\vx^k)$, algorithm \ref{algorithm:inspecting process} will stop and output a point $\bar{\vx}^{k^*}$ within finitely many iterations: 
$k^*\leq\frac{F(\vx_0)-F^*}{\nu},$
where $F^*$ is the global minimum of $F$.

The sampling step is a \emph{hit-and-run}, that is, points are only sampled when they are used, and the sampling stops whenever a better point is obtained (or all points have been used). The method of sampling and the number of sample points can vary throughout iterations and depend on the problem structure.
In general, sampling points from the outside toward the inside is more efficient. 

Here, we analyze a simple approach in which sufficiently many well-distributed points are sampled to ensure that $\bar{\vx}^{k^*}$ is an approximate $R$-local minimizer. %For simplicity, we use $\bar{\vx}$ to represent the final solution $\bar{\vx}^{k^*}$ and to represent the sampling points before returning $\bar{\vx}$.%$\{\vy_1^{k^*},\vy_2^{k^*},...,\vy_m^{k^*}\}$.

%Note that this assumption is not restricted. For example when $F$ is differentiable and $\mathbf{Alg}$ is the gradient descent method, the gradient of $F$ at $\bar{\vx}$ will be small. Since $\nabla F$ is Lipschitz continuous, we are able to bound $\|\nabla F\|$ in the ball $B(\bar{x},R)$ by a small quantity. Thus the existence of $\bar{L}$ is ensured. 
% \begin{assum}

% \label{Assumption:sampling}
% \end{assum}

\begin{thm}
Assume that $f(\vx)$ is $\bar{L}$-Lipschitz continuous\footnote{Do not confuse with the Lipschitz constant $L$ of $\nabla f$.} in the ball $B(\bar{\vx},R)$ and the set of sample points $\cS=\{\vy_1,\vy_2,...,\vy_m\}$ has density $\bar{r}$, that is,
\[\max_{\vx \in B(\bar{\vx},R)}\min_{0 \leq j \leq m}\|\vx-\vy_{j}\| \leq \bar{r} \]
where $\vy_{0}=\bar{\vx}$. If
\begin{align*}
F(\vy_j)\geq F(\bar{\vx})-\nu,\quad j=1,2,\ldots,m,
\end{align*}
then the point $\bar{\vx}$ is an $R$-local minimizer of $F$ up to $\eta=\nu + (\bar{L}+\alpha)\bar{r}+2\beta$.
\label{thm:approximate R-local}
\end{thm}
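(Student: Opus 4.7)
The plan is a direct Lipschitz estimate: I will show that for every $\vx \in B(\bar{\vx}, R)$ the gap $F(\bar{\vx}) - F(\vx)$ is at most $\eta$, which after taking the infimum over $\vx$ yields the approximate $R$-local minimizer property $F(\bar{\vx}) \leq \min_{\vx \in B(\bar{\vx},R)} F(\vx) + \eta$.

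Fix any $\vx \in B(\bar{\vx}, R)$. By the density hypothesis, and because the defining minimum ranges over $j \in \{0,1,\ldots,m\}$ with $\vy_0 = \bar{\vx}$, there is an index $j^\star$ with $\|\vx - \vy_{j^\star}\| \leq \bar{r}$. The key unifying observation is that the non-descent assumption $F(\vy_j) \geq F(\bar{\vx}) - \nu$, stated for $j \geq 1$, extends trivially to $j = 0$ since $F(\vy_0) = F(\bar{\vx})$. Hence, regardless of whether the nearest sample is $\bar{\vx}$ itself or one of the inspected $\vy_j$, I get the uniform bound $F(\bar{\vx}) \leq F(\vy_{j^\star}) + \nu$.

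Next I would decompose $F = f + r$ and split
\[
F(\bar{\vx}) - F(\vx) \;\leq\; \nu + \bigl(f(\vy_{j^\star}) - f(\vx)\bigr) + \bigl(r(\vy_{j^\star}) - r(\vx)\bigr).
\]
The first bracket is at most $\bar{L}\bar{r}$ by the Lipschitz continuity of $f$ on $B(\bar{\vx}, R)$ (applicable because both $\vx$ and $\vy_{j^\star}$ lie in that ball), and the second is at most $\alpha\bar{r} + 2\beta$ by Assumption~\ref{assumption:r}. Summing reproduces exactly $\eta = \nu + (\bar{L}+\alpha)\bar{r} + 2\beta$, so $F(\bar{\vx}) \leq F(\vx) + \eta$ for all $\vx$ in the ball, which is what we want.

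There is no genuine obstacle here beyond bookkeeping; the result is essentially a quantitative Lipschitz-plus-oscillation estimate combined with the fact that $\cS \cup \{\bar{\vx}\}$ forms an $\bar{r}$-net of $B(\bar{\vx}, R)$. The one subtle point worth flagging is the inclusion of $\vy_0 = \bar{\vx}$ inside the density definition: without it, a competitor $\vx$ sitting close to $\bar{\vx}$ but far from every inspected $\vy_j$ with $j \geq 1$ would need a separate direct estimate of the form $F(\bar{\vx}) - F(\vx) \leq (\bar{L}+\alpha)\|\vx - \bar{\vx}\| + 2\beta$. Including $\vy_0$ folds this boundary case uniformly into the main argument.
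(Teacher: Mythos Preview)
Your proposal is correct and follows essentially the same argument as the paper: pick a point in the ball (the paper takes the minimizer $\tilde{\vx}$, you take an arbitrary $\vx$), use the $\bar{r}$-density to locate a nearby sample $\vy_{j^\star}$, apply the Lipschitz bound on $f$ and Assumption~\ref{assumption:r} on $r$ to control $F(\vy_{j^\star}) - F(\vx)$, and combine with $F(\bar{\vx}) \le F(\vy_{j^\star}) + \nu$. Your explicit handling of the $j=0$ case is a nice touch that the paper leaves implicit.
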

\begin{proof}
Suppose $\min\limits_{\vx \in B(\bar{\vx},R)} F(\vx)$ is attained at $\tilde{\vx}$.
Since $\cS$ has density $r$, we have $\vy_j$ such that
\begin{align*}
  \|f(\tilde{\vx})-f(\vy_{j})\| & \leq \bar{L} \bar{r},\\  
  \|r(\tilde{\vx})-r(\vy_{j})\| & \leq \alpha \bar{r}+ 2\beta.
\end{align*}
From $F(\bar{\vx})- \nu \le F(\vy_{j}) $ and $ F(\vy_{j})= F(\tilde{\vx}) + (F(\vy_{j})-F(\tilde{\vx}))$, it follows
\begin{align*}
F(\bar{\vx}) &\leq \min\limits_{\vx \in B(\bar{\vx},R)} F(\vx)+ \nu + (\bar{L}+\alpha)\bar{r}+2\beta.
\tag*{\qed}
\end{align*}
%which means $\bar{\vx}$ is an $R$-local minimizer of $F$ up to $\eta$.
\end{proof}
When the dimension of $\vx$ is high , it is impractical to inspect over a high-dimensional ball. This motivates us to extend algorithm \ref{algorithm:inspecting process} to its blockwise version.  
\begin{algorithm}[H]
\caption{Run-and-Inspect Method (blockwise version)} 
\begin{algorithmic}[1]
\STATE Set $k=0$ and choose $\vx^0 \in \mathbb{R}^n$;\\
\STATE Choose the descent threshold $\nu > 0$;
%\STATE An existing algorithm $\mathbf{Alg}$;
\LOOP
    \STATE $\bar{\vx}^k=\mathbf{Alg}(\vx^k)$;
    \STATE Generate sets $\cS_i$ of sample points in $B(\bar{x}^k_i,R_i)$ for $i=1,...,s$;
    %\FOR{$i=1,...,s$}
        %\STATE Sample points in $\subset B(\bar{x}^k_i,R_i)$ in a certain way;%, and let $j_k=\argmin_{1 \leq j\leq m} F(y_{ij}^k,\bar{\vx}^k_{-i})$; 
    \IF {there exist $i$ and $z\in \cS_i$ such that $F(z,\bar{\vx}^k_{-i}) < F(\bar{x}^k,\bar{\vx}^k_{-i})- \nu$}
        \STATE $x_i^{k+1}= z$;
        \STATE $x_j^{k+1}=\bar{x}_j^k$ for all $j\neq i$;
        \STATE $k=k+1$;
    \ELSE
        \STATE \textbf{stop} and \textbf{return} $\bar{\vx}^k$;
    \ENDIF
    %\ENDFOR
\ENDLOOP
\end{algorithmic}
\label{algorithm:blockwise inspecting process}
\end{algorithm}

Algorithm \ref{algorithm:blockwise inspecting process} samples points in a block while keeping other block variables fixed. This algorithm ends with an approximate blockwise $\vR$-local minimizer. %We still use $\bar{\vx}$ to denote the final solution. And in the last round of sampling, we use $\{z_{i1},...z_{i m_i}\}$ to denote points sampled from $\subset B(\bar{x}_i,R_i)$, $1\leq i\leq s$. 
%\begin{assum}
%For each $1 \leq i \leq s$, $F(x_i,\bar{\vx}_{-i})$ is $\bar{L}_i$-Lipschitz continuous in the ball $B(\bar{x}_i,R_i)$.
%\label{assumption: blockwise Lipshitz}
%\end{assum}

%\begin{assum}
%The final sampling points are at density $\bar{r}$ blockwisely if
%\[\max_{x_i \in B(\bar{x}_i,R_i)}\min_{0 \leq j \leq i_m}\|x_i-z_{ij}\| \leq \bar{r} , \quad \forall 1 \leq i \leq s \]
%where $z_{i0}=\bar{x}_i$.
%\label{Assumption:sampling blockwise}
%\end{assum}
\begin{thm}
Assume that $f(x_i,\bar{\vx}_{-i})$ is $\bar{L}_i$-Lipschitz continuous in the ball $B(\bar{x}_i,R_i)$ for $1\leq i\leq s$ and the set of sample points $\cS_i=\{z_{i1},z_{i2},\ldots,z_{im_i}\}, 1\leq i\leq s$ has blockwise-density $\bar{r}$, that is,
\[\max_{x_i \in B(\bar{x}_i,R_i)}\min_{0 \leq j \leq m_i}\|x_i-z_{ij}\| \leq \bar{r} , \quad \forall 1 \leq i \leq s, \]
where $z_{i0}=\bar{x}_i$.
If
\begin{align*}
F(z_{ij},\bar{\vx}_{-i})\geq F(\bar{\vx})-\nu,\quad j=1,2,\ldots,m_i, i=1,2\ldots,s,
\end{align*}
then $\bar{\vx}$ is a blockwise $\vR$-local minimizer of $F$ up to $\eta=[\eta_1,\dots,\eta_s]^T$ for $\eta_i=\nu+(\bar{L}_i+\alpha)\bar{r}+2\beta$.
\label{thm:approximate blockwise R-local}
\end{thm}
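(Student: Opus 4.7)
The plan is to reduce the blockwise statement to $s$ independent applications of Theorem~\ref{thm:approximate R-local}, one per block. Fix an index $i$ and freeze the other coordinates at $\bar{\vx}_{-i}$. Then $x_i \mapsto F(x_i, \bar{\vx}_{-i}) = f(x_i, \bar{\vx}_{-i}) + r(x_i, \bar{\vx}_{-i})$ is a function on $\mathbb{R}^{n_i}$ whose smooth part is $\bar{L}_i$-Lipschitz on $B(\bar{x}_i, R_i)$ by hypothesis, and whose nonsmooth part inherits Assumption~\ref{assumption:r} with the same constants $\alpha, \beta$ (since restricting to a slice does not enlarge oscillation). The sample set $\cS_i$ together with $z_{i0} = \bar{x}_i$ forms an $\bar{r}$-dense subset of $B(\bar{x}_i, R_i)$, so the hypotheses of Theorem~\ref{thm:approximate R-local} are satisfied on this slice.

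The key step is to bound $F(\bar{x}_i, \bar{\vx}_{-i}) - \min_{x_i \in B(\bar{x}_i, R_i)} F(x_i, \bar{\vx}_{-i})$ by $\eta_i$. Let $\tilde{x}_i$ attain this minimum and pick $z_{ij^*} \in \cS_i \cup \{z_{i0}\}$ with $\|\tilde{x}_i - z_{ij^*}\| \leq \bar{r}$, which exists by the density assumption. Using $\bar{L}_i$-Lipschitz continuity of $f$ on the ball and Assumption~\ref{assumption:r} applied to $r$ (both with the other blocks held fixed), estimate
\begin{align*}
F(z_{ij^*}, \bar{\vx}_{-i}) - F(\tilde{x}_i, \bar{\vx}_{-i}) \leq \bar{L}_i \bar{r} + \alpha \bar{r} + 2\beta.
\end{align*}
Combining with the assumed bound $F(z_{ij^*}, \bar{\vx}_{-i}) \geq F(\bar{\vx}) - \nu = F(\bar{x}_i, \bar{\vx}_{-i}) - \nu$ (valid even when $j^* = 0$, in which case the difference is $0 \leq \nu$), we obtain
\begin{align*}
F(\bar{x}_i, \bar{\vx}_{-i}) \leq F(\tilde{x}_i, \bar{\vx}_{-i}) + \nu + (\bar{L}_i + \alpha)\bar{r} + 2\beta = \min_{x_i \in B(\bar{x}_i, R_i)} F(x_i, \bar{\vx}_{-i}) + \eta_i,
\end{align*}
which is exactly the defining inequality of a blockwise $\vR$-local minimizer up to $\eta_i$ for block $i$.

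Since $i$ was arbitrary, iterating over $1 \leq i \leq s$ yields the full componentwise conclusion with $\eta = [\eta_1, \ldots, \eta_s]^T$. There is no real obstacle here: the argument is a direct transcription of the proof of Theorem~\ref{thm:approximate R-local} applied slicewise, and the only thing to verify carefully is that Assumption~\ref{assumption:r}, which concerns $r$ as a function on the whole space, still delivers the same $\alpha, \beta$ when we vary only one block — this is immediate since $\|(x_i, \bar{\vx}_{-i}) - (z_{ij^*}, \bar{\vx}_{-i})\| = \|x_i - z_{ij^*}\|$.
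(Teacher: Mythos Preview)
Your proof is correct and follows exactly the route the paper intends: the paper's own ``proof'' is the single sentence ``The proof is similar to that of Theorem~\ref{thm:approximate R-local},'' and your argument is precisely the slicewise transcription of that proof, including the care you take to note that Assumption~\ref{assumption:r} restricts to each block with the same $\alpha,\beta$ and that the $j^*=0$ case is harmless.
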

The proof is similar to that of Theorem \ref{thm:approximate R-local}.

The next proposition states that inspection around a point with sufficiently large partial gradient of $f$ ensures a sufficient descent.
\begin{prop}
\label{thm:density}
%Assume that we sample points in the ball $B(\bar{x}_i,R_i)$ with density $\bar{r}<R_i$. If $\|\nabla_i f(\bar{x}_i,\bar{\vx}_{-i})\|\geq \frac{\frac{L_i}{2}R_i^2+\alpha R_i+2\beta+\nu}{R_i-\bar{r}}$, there exists at least one sampled 
Assume that we sample points in the ball $B(\bar{x}_i,R_i)$ with density $\bar{r}\leq\frac{R_i}{2}$. If $\|\nabla_i f(\bar{x}_i,\bar{\vx}_{-i})\|\geq\frac{9}{2}L_i\bar{r}+3\alpha+\frac{2\beta+\nu}{\bar{r}}$, then there exists at least one sampled
point $z_i$ which satisfies
\begin{align}
    F(z_i,\bar{\vx}_{-i})<F(\bar{\vx})-\nu.
\end{align}
\end{prop}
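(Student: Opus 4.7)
The plan is to pick a strategic ``ideal'' escape point along the negative partial gradient direction, deduce a decrease at that ideal point via the descent lemma, and then pay a small penalty to transfer the decrease to a nearby sample point. The specific constants $\tfrac{9}{2}L_i\bar r+3\alpha+\tfrac{2\beta+\nu}{\bar r}$ in the hypothesis suggest that the right choice of step length is $t=2\bar r$, which is precisely why the assumption $\bar r\le R_i/2$ appears: it ensures the ideal point lies inside $B(\bar x_i,R_i)$ so the density property applies.

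Concretely, let $g:=\nabla_i f(\bar x_i,\bar{\vx}_{-i})$ and define $\tilde x_i:=\bar x_i-2\bar r\,g/\|g\|$, so $\|\tilde x_i-\bar x_i\|=2\bar r\le R_i$. First I would apply the descent lemma (from $L_i$-Lipschitz partial gradient of $f$) between $\bar x_i$ and $\tilde x_i$, picking up
\begin{equation*}
f(\tilde x_i,\bar{\vx}_{-i})\le f(\bar x_i,\bar{\vx}_{-i})-2\bar r\,\|g\|+2L_i\bar r^{2}.
\end{equation*}
Next, by the density assumption, there is a sample point $z_i\in\cS_i$ with $\|z_i-\tilde x_i\|\le\bar r$, so in particular $\|z_i-\bar x_i\|\le 3\bar r$. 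Applying the descent lemma a second time between $\tilde x_i$ and $z_i$, and using the Lipschitz gradient bound $\|\nabla_i f(\tilde x_i,\bar{\vx}_{-i})\|\le\|g\|+2L_i\bar r$, I get
\begin{equation*}
f(z_i,\bar{\vx}_{-i})\le f(\tilde x_i,\bar{\vx}_{-i})+(\|g\|+2L_i\bar r)\bar r+\tfrac{L_i}{2}\bar r^{2}.
\end{equation*}
Adding the two inequalities yields $f(z_i,\bar{\vx}_{-i})-f(\bar x_i,\bar{\vx}_{-i})\le-\bar r\,\|g\|+\tfrac{9}{2}L_i\bar r^{2}$.

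Then I would bound the $r$-part using Assumption~\ref{assumption:r} together with $\|z_i-\bar x_i\|\le 3\bar r$, giving $|r(z_i,\bar{\vx}_{-i})-r(\bar x_i,\bar{\vx}_{-i})|\le 3\alpha\bar r+2\beta$. Combining the $f$ and $r$ estimates,
\begin{equation*}
F(z_i,\bar{\vx}_{-i})-F(\bar{\vx})\le -\bar r\,\|g\|+\tfrac{9}{2}L_i\bar r^{2}+3\alpha\bar r+2\beta,
\end{equation*}
and multiplying the hypothesized lower bound on $\|g\|$ by $\bar r$ gives $\bar r\,\|g\|\ge\tfrac{9}{2}L_i\bar r^{2}+3\alpha\bar r+2\beta+\nu$, which yields $F(z_i,\bar{\vx}_{-i})<F(\bar{\vx})-\nu$ as required.

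The routine parts are the two applications of the descent lemma and the $r$-bound. The only genuinely delicate step is the bookkeeping of constants: the step size $t=2\bar r$ (versus, say, $\bar r$ or $3\bar r$) is forced by the coefficient $9/2$ in the hypothesis, and the reason the $r$-penalty is $3\alpha\bar r$ rather than $2\alpha\bar r$ or $\alpha\bar r$ is that $\|z_i-\bar x_i\|$ can be as large as $2\bar r+\bar r=3\bar r$ by the triangle inequality. Getting these constants to line up exactly with the statement, and verifying that $\bar r\le R_i/2$ is exactly what is needed for $\tilde x_i$ to stay inside the inspection ball, will be where I need to be careful.
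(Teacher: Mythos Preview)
Your proposal is correct and follows essentially the same approach as the paper: choose the ideal escape point $\tilde x_i=\bar x_i-2\bar r\,g/\|g\|$, pick a sample point $z_i$ within $\bar r$ of it (so $\|z_i-\bar x_i\|\le 3\bar r$), bound the $r$-part via Assumption~\ref{assumption:r}, and use the descent lemma on $f$. The only cosmetic difference is that the paper applies the descent lemma once, directly from $\bar x_i$ to $z_i$ (splitting $\langle g,z_i-\bar x_i\rangle=\langle g,z_i-\tilde x_i\rangle+\langle g,\tilde x_i-\bar x_i\rangle\le \|g\|\bar r-2\|g\|\bar r$ and using $\tfrac{L_i}{2}\|z_i-\bar x_i\|^2\le\tfrac{9}{2}L_i\bar r^2$), whereas you apply it twice via the intermediate point $\tilde x_i$; both routes yield the identical bound $f(z_i,\bar{\vx}_{-i})-f(\bar{\vx})\le-\bar r\|g\|+\tfrac{9}{2}L_i\bar r^2$.
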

\begin{proof}
Let $x_i = \bar{x}_i-2\frac{\nabla_i f(\bar{\vx}_i)}{\|\nabla_i f(\bar{\vx}_i)\|}\bar{r}$. There exists $z_i\in B(\bar{x}_i,R_i)$ such that $\|x_i-z_i\|\leq \bar{r}$. Then
\begin{align*}
F(z_i,\bar{\vx}_{-i})&=f(z_i,\bar{\vx}_{-i})+r(z_i,\bar{\vx}_{-i})\leq f(z_i,\bar{\vx}_{-i})+r(\bar{\vx})+3\alpha\bar{r}+2\beta\\
&\leq f(\bar{\vx})+\dotp{\nabla_i f(\bar{x}_i,\bar{\vx}_{-i}),z_i-\bar{x}_i}+\frac{L_i}{2}\|z_i-x_i\|^2+r(\bar{\vx})+3\alpha\bar{r}+2\beta\\
&=F(\bar{\vx})+\dotp{\nabla_i f(\bar{x}_i,\bar{\vx}_{-i}),z_i-x_i}+\dotp{\nabla_i f(\bar{x}_i,\bar{\vx}_{-i}),x_i-\bar{x}_i}+\frac{L_i}{2}\|z_i-x_i\|^2+3\alpha\bar{r}+2\beta\\
&\leq F(\bar{\vx})-\|\nabla_i f(\bar{x}_i,\bar{\vx}_{-i})\|\bar{r}+\frac{9}{2}L_i\bar{r}^2+3\alpha\bar{r}+2\beta\\
&\leq F(\bar{\vx})-\nu.
\tag*{\qed}
\end{align*}
\end{proof}
Therefore $\bar{L}_i$ in Theorem \ref{thm:approximate blockwise R-local} can be bounded by $\frac{9}{2}L_i\bar{r}+3\alpha+\frac{2\beta+\nu}{\bar{r}}+L_iR_i$. And we can set
\begin{align*}
   \eta_i = 2\nu+\frac{9}{2}L_i\bar{r}^2+4\alpha\bar{r}+4\beta+L_iR_i\bar{r}.
\end{align*}
This bound is not tight when $R_i$ is very large. 

\subsection{Complexity analysis}
Since Algorithm \ref{algorithm:blockwise inspecting process} generalizes Algorithm \ref{algorithm:inspecting process} to multiple blocks, we analyze the complexity of the former.

There are quite many parameters that affect the complexity results. In this analysis, we focus on the dimension of the space $d$ and different options to set the dimension $d'$ of each block, assuming that all blocks have the same dimension $d'\le d$ and $d'$ evenly divides $d$, thus, creating exactly $s=d/d'$ blocks of variables. %We also assume that the whole region has a diameter $\Theta(d^u)$, where the scalar $u\in\RR$ is problem dependent.
We assume that the smoothness and strong-convexity parameters of $f$ are $L_i=L\geq\mu$, respectively. Of course, $L,\mu$ affect the complexity significantly though not as much as the dimensions (unless $L,\mu$ themselves depend on $d$). Assume the function $r$ satisfies Assumption \ref{assumption:r} with parameters {$\alpha,\beta\in [0,1)$}. %We require each inspection in Algorithm \ref{algorithm:blockwise inspecting process} to decrease the objective by $\nu=\Theta(\beta)$. 
The $\vR$-local min tolerance $\eta_i$ of each block $i$ is tied to $\beta$, $\nu$, $\bar{r}$ (density of sample points), and $R_i$. Based on Theorem \ref{thm:approximate blockwise R-local} and Proposition \ref{thm:density} and using free parameters $c_{\nu}, c_{\bar{r}}, t$ that will be tuned later, we set $\nu:=c_{\nu}\beta$, $\bar{r}:=\sqrt{\frac{c_{\bar{r}}\beta}{L}}$, $R_i\equiv R:=t\bar{r}$ for some $t\geq 2$ and thus get $\eta_i\equiv\eta=2\nu+\frac{9}{2}L_i\bar{r}^2+4\alpha\bar{r}+4\beta+L_iR_i\bar{r}= \left(2c_{\nu}+\left(\frac{9}{2}+t\right)c_{\bar{r}}+4\right)\beta+4\alpha\sqrt{\frac{c_{\bar{r}}\beta}{L}}$.

%It is reasonable to select $c_{\nu},c_{\bar{r}}=\Theta(1)$, which . The parameter $t$ will be determined later since it will affect both of the global error bound and the inspection complexity.

%For simplicity of computation, we choose $R_i\equiv R:=\sqrt{\frac{t\beta}{L}}>2\bar{r}$ for some $t>\frac{4L\bar{r}^2}{\beta}=\Theta(\beta)$, where the parameter $t$ is free and will be determined below. 
Remember Theorem \ref{thm:approx blockwise R-local} gives a bound on $\delta$ for Algorithm \ref{algorithm:blockwise inspecting process}, 
\begin{align}
    \delta &\geq \sqrt{\frac{d}{d'}}\left(\alpha+\max\{\frac{4\beta+2\eta}{R},\sqrt{(4\beta+2\eta)L}\}\right)=\sqrt{\frac{d}{d'}}\left(\alpha+c_1\sqrt{\beta L}\right),
\label{eqn:delta bound}
\end{align}
where $c_1=\max\{\frac{4+2\frac{\eta}{\beta}}{t\sqrt{c_{\bar{r}}}},\sqrt{4+2\frac{\eta}{\beta}}\}$. Using this $\delta$, Theorem \ref{thm:error bound} produces global error bounds. We can calculate that our Algorithm \ref{algorithm:blockwise inspecting process} using $R_i=t\bar{r}$ will return an approximate $\vR$-local minimizer $\bar{\vx}$ satisfying the global error bound:
\begin{align}
    F(\bar{\vx})-F(\vx^*) \leq \frac{\delta^2+2\alpha \delta}{\mu}+ 2\beta= c_2\left(\frac{d}{d'}\frac{\left(\alpha+c_1\sqrt{\beta L}\right)^2}{\mu}\right),
    \label{eqn:rough bound1}
\end{align}
where $c_2=1+2\frac{\alpha}{\delta}+\frac{2\mu\beta}{\delta^2}\leq 4$.

For simplicity, we set $c_{\bar{r}}=c_{\nu}=1$, $t=6$ and assume $\alpha<\sqrt{\beta L}$. By \eqref{eqn:delta bound}, \eqref{eqn:rough bound1}, we will get
\begin{align}
\label{eqn:rough bound}
    F(\bar{\vx})-F(\vx^*)\leq 4\left(\frac{d}{d'}\frac{\left(\alpha+7.5\sqrt{\beta L}\right)^2}{\mu}\right)
\end{align}
%Under our assumption on the diameter of the whole region in this subsection, $F(\vx^0)-F(\vx^*)=\Theta(\mu d^{2u})$. Hence, the relative bound is 
%\begin{align}
%\label{eqn:relative rough bound}
%    \frac{F(\bar{\vx})-F(\vx^*)}{F(\vx^0)-F(\vx^*)} \leq c_2\left(\frac{d^{1-2u}}{d'}\frac{\left(\alpha+c_1\sqrt{\beta L}\right)^2}{\mu}\right). %s = \frac{d}{d'}
%\end{align}
Next, we take three steps to calculate the complexity of Algorithm \ref{algorithm:blockwise inspecting process}:
\begin{itemize}
    \item Since \textbf{Alg} is a descent algorithm and each inspection decreases the objective error by at least $\nu=c_{\nu}\beta$, with initial point $x^0$, we need at most $O\left(\frac{F(\vx^0)-F(\vx^*)}{c_{\nu}\beta}\right)$ inspections or loops in Algorithm \ref{algorithm:blockwise inspecting process}. Under our assumption $c_{\nu}=1$, the number of inspections is $O\left(\frac{F(\vx^0)-F(\vx^*)}{\beta}\right)$
    \item In each loop, Algorithm \ref{algorithm:blockwise inspecting process} runs \textbf{Alg} with a complexity $C_{\textbf{Alg}}$ and perform an inspection.
    \item Each inspection involves at most $\frac{d}{d'}$ blocks. When inspecting each $d'$-dimensional block, we sample an $\bar{r}$-net. For simplicity, we just sample the points on a uniform grid. Now the space are partitioned into many squares. The distance between the center and grid points of each square should be the density $\bar{r}$, for which the grid width needs to be $\frac{2\bar{r}}{\sqrt{d'}}$. Hence, the volumn of each square is $v_{d'}(\bar{r})=\left(\frac{2\bar{r}}{\sqrt{d'}}\right)^{d'}$. The volumn of the inspection ball is $V_{d'}(R_i)=\frac{\pi^{\frac{d'}{2}}R_i^{d'}}{\Gamma(\frac{d'}{2}+1)}$. By Stirling's formula, $\Gamma(\frac{d'}{2}+1)\approx \sqrt{2\pi}(\frac{d'}{2})^{\frac{d'}{2}+\frac{1}{2}}e^{-\frac{d'}{2}}$. Then the total number of the inspected points is around
    \begin{align*}
        \frac{V_{d'}(R_i)}{v_{d'}(\bar{r})}\approx\frac{\pi^{\frac{d'}{2}}R_i^{d'}}{\sqrt{2\pi}(\frac{d'}{2})^{\frac{d'}{2}+\frac{1}{2}}e^{-\frac{d'}{2}}\left(\frac{2\bar{r}}{\sqrt{d'}}\right)^{d'}}=\left(\frac{\pi e}{2}\right)^{\frac{d'}{2}}\frac{1}{\sqrt{\pi d'}}\left(\frac{R_i}{\bar{r}}\right)^{d'}=e^{\Theta\left(d'\left(\log\left(\frac{R_i}{\bar{r}}\right)+\frac{1}{2}\log\left(\frac{\pi e}{2}\right)\right)\right)}.%=\frac{\pi^{\frac{d'}{2}}R_i^{d'}}{\Gamma\left(\frac{d'}{2}+1\right)\left(\frac{2r}{\sqrt{d'}}\right)^{d'}}
    \end{align*}
Using our choice $R=t\bar{r}$ and $\frac{1}{2}\log\left(\frac{\pi e}{2}\right)\approx 0.54$, we get
    \begin{align*}
        \frac{V_{d'}(R_i)}{v_{d'}(\bar{r})}=e^{\Theta\left(d'\left(\log(t)+0.54\right)\right)},
    \end{align*}
where $t=6$ under our assumption. 
%where, since $t> 1>\beta$, we have $\log\left(\sqrt{\frac{t}{\beta}}\right)>0$.

\end{itemize}

%Assume that \textbf{Alg} in Algorithm \ref{algorithm:blockwise inspecting process} is a descent algorithm with a complexity of $C_{\text{\textbf{Alg}}}$ for finding a stationary point. We do not specify $C_{\text{\textbf{Alg}}}$ because it is typically dominated by the inspection complexity.

The complexity of Algorithm \ref{algorithm:blockwise inspecting process} is the product of the number of loops and the complexity of each loop:
%\begin{align}
%\label{eqn:complexity}
%    O\left(\frac{F(\vx^0)-F(\vx^*)}{c_{\nu}\beta}\frac{d}{d'}\left(\frac{V_{d'}(R_i)}{v_{d'}(\bar{r})}+C_{\textbf{Alg}}\right)\right).
%\end{align}
%Since we expect $C_{\textbf{Alg}}$ to be less than the inspection complexity, \eqref{eqn:complexity} can be simplified as
%\begin{align}
%\label{eqn:simplified complexity}
%    O\left(\frac{F(\vx^0)-F(\vx^*)}{c_{\nu}\beta}\frac{d}{d'}e^{O\left(d'\log\left(t\right)+0.54 d'\right)}\right).
%\end{align}
%Using the previous choices $c_{\nu}=1, t=6$, the complexity becomes
\begin{align}
\label{eqn:simplified complexity}
    O\left(\frac{F(\vx^0)-F(\vx^*)}{\beta}\frac{d}{d'}e^{O\left(d'\log\left(6\right)+0.54 d'\right)}\right).
\end{align}

\begin{itemize}
%    \item Because $\alpha$ and $\beta$ both affect the accuracy, we can choose the free parameter $t$ appropriately to reach a balance. For example, if $\alpha\lesssim \sqrt{\beta}$, we can choose $t=\Theta(1)$ to get a relative accuracy of $O(\frac{d^{1-2u}}{d'}\beta)$; otherwise, choose $t = \Theta(\frac{\alpha^2}{\beta})$ to get of $O(\frac{d^{1-2u}}{d'}\alpha^2)$. Altogether, we can have $O(\frac{d^{1-2u}}{d'}\min\{\alpha^2,\beta\})$.
    \item If we choose $d'= \Theta(d)$, then $\frac{d}{d'}=\Theta(1)$. In this case by \eqref{eqn:rough bound}, our algorithm can reach a good accuracy of $ O\left(\frac{\left(\alpha+7.5\sqrt{\beta L}\right)^2}{\mu}\right)$. On the other hand, the inspection complexity \eqref{eqn:simplified complexity} is exponential in $d$.
    \item If we go to the other extreme by choosing $d'=\Theta(1)$, then $\frac{d}{d'}=\Theta(d)$. The complexity \eqref{eqn:simplified complexity} reduce to a polynomial in $d$, but the accuracy becomes worse, $O\left(d\frac{\left(\alpha+7.5\sqrt{\beta L}\right)^2}{\mu}\right)$. In general, it becomes proportional in the dimension $d$. Except, if the function $r$ is very nice with  $\alpha=O(\frac{1}{\sqrt{d}}),\beta\ =O(\frac{1}{d})$, then the relative accuracy is still good at $O\left(1\right)$.
    \item In general, we can choose $d'=\Theta(d^v)$ for $v\in(0,1)$, where the choice of $v$ controls the tradeoff between the accuracy and complexity.
    %\item The parameter $t$ also significantly affect the global optimality bound and the complexity of the algorithm. A better bound can be obtained by increasing $t$, while the complexity will also be increased; vice versa. For example, we simply set $$c_{\alpha}=c_{\nu}=c_{\bar{r}}=1$$
\end{itemize}

\section{Numerical experiments}
\label{section:numerical experiments}
In this section, we apply our Run-and-Inspect Method to a set of nonconvex problems. %We first consider problems with the implicit decomposition $F=f+r$ and then more general problems, on which our method still yields satisfactory results. 
We admit that it is difficult to apply our theoretical results because the implicit decomposition $F=f+r$ with $f,r$ satisfying their assumptions is not known.
Nonetheless, %the analytic results above indicate that a proper $R$ exists and inspections are effective. 
The encouraging experimental results below demonstrate the effectiveness of our Run-and-Inspect Method on nonconvex problems even though they may not have the decomposition.
%the results of our Run-and-Inspect Method are very encouraging.  %(\textbf {for present examples it seems difficult to apply the theoretical bounds})

%The design of inspection is obviously important to the performance. It depends on the problem structures, and below we describe each problem below.

\subsection{Test example : Ackley's function}
The Ackley function is widely used for testing optimization algorithms, and in $\mathbb{R}^2$, it has the form
\[f(x,y)=-20e^{-0.2\sqrt{0.5(x^2+y^2)}}-e^{0.5(\cos 2\pi x+\cos 2\pi y)}+e+20.\]
   \begin{figure}[H]
        \centering
        \includegraphics[width=150pt]{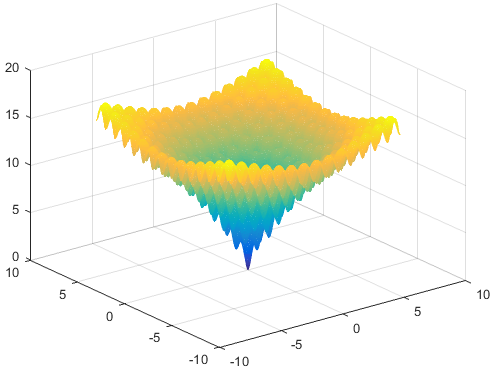}
        \caption{Landscape of Ackley's function in $\mathbb{R}^2$.}
    \end{figure}
The function is symmetric, and its oscillation is regular. %Hence, it is not very difficult to find the global minimizer. 
To make it less peculiar, we %{\color{red}add rotation and transformation}  
modify it to an asymmetric function:
\begin{align}\label{eq:F1}
F(x,y)=-20e^{-0.04(x^2+y^2)}-e^{0.7(\sin(xy)+\sin y)+0.2\sin(x^2)}+20.
\end{align}
    \begin{figure}[ht]
        \centering
        \includegraphics[width=6cm]{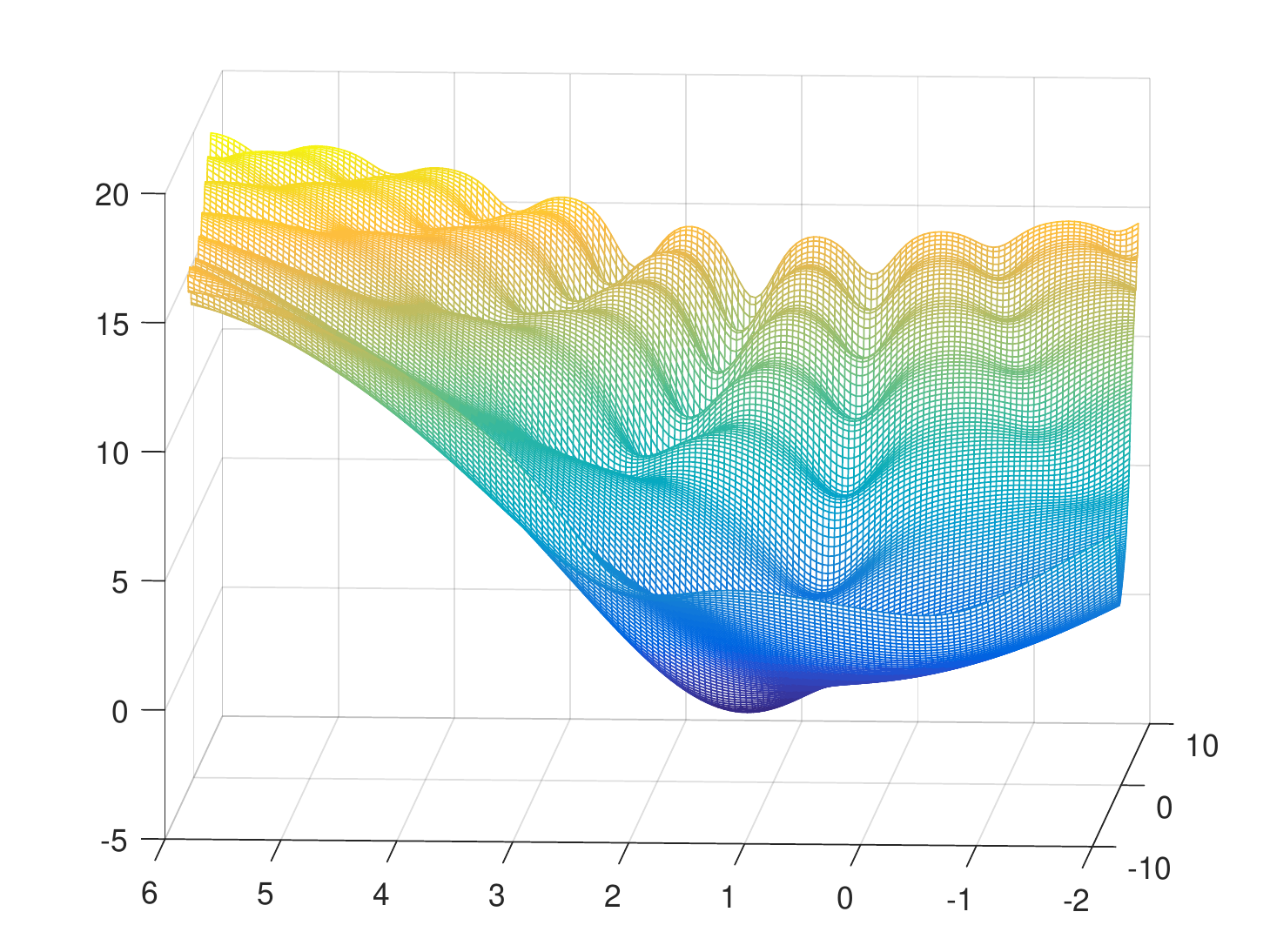}\includegraphics[width=6cm]{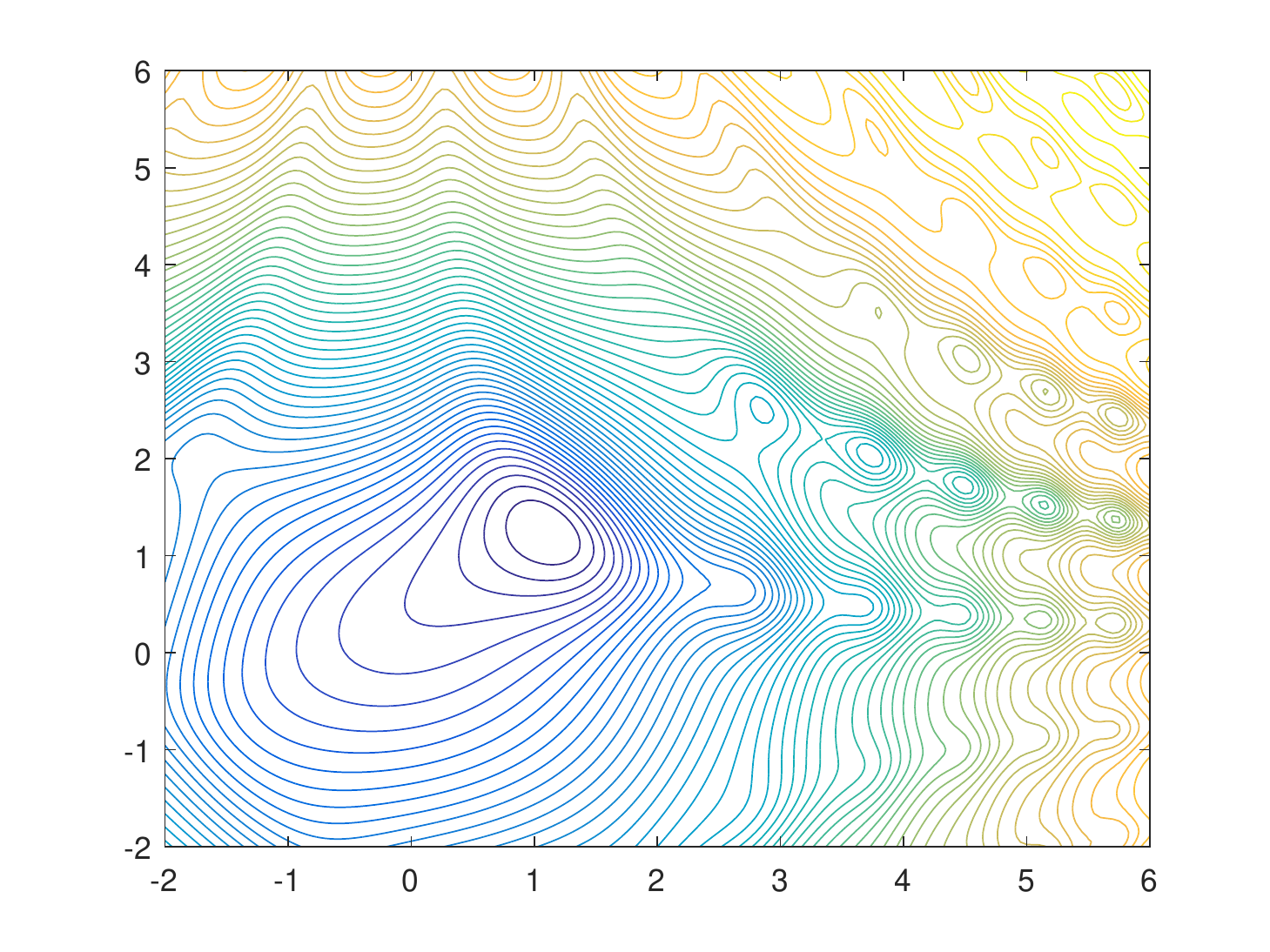}
        \caption{Landscape and contour of $F$ in \eqref{eq:F1}.}
    \end{figure}
    
The function $F$ in \eqref{eq:F1} has a lot of local minimizers, which are irregularly distributed. If we simply use the gradient descent (GD) method without a good initial guess, it will converge to a nearby local minimizer. To escape from local minimizers, we conduct our Run-and-Inspect Method according to Algorithms~\ref{algorithm:inspecting process} and \ref{algorithm:blockwise inspecting process}. We sample points starting from the outer of the ball toward the inner. The radius $R$ is set as $1$ and $\Delta R$ as $0.2$. $\mathbf{Alg}$ is GD and block-coordinate descent (BCD), and we apply two-dimensional inspection and blockwise one-dimensional inspection to them, respectively. The step size of GD and BCD is $1/40$. The results are shown in Figures \ref{test example:2D inspect} and \ref{test example:blockwise inspect}, respectively. Note that the ``run'' and ``inspect'' phases can be decoupled, so a blockwise inspection can be used with either standard descent or blockwise descent algorithms. % unless one wants to couple the ``run'' and ''inspect'' phases.
    \begin{figure}[ht]
        \centering
        
        \includegraphics[width=6cm]{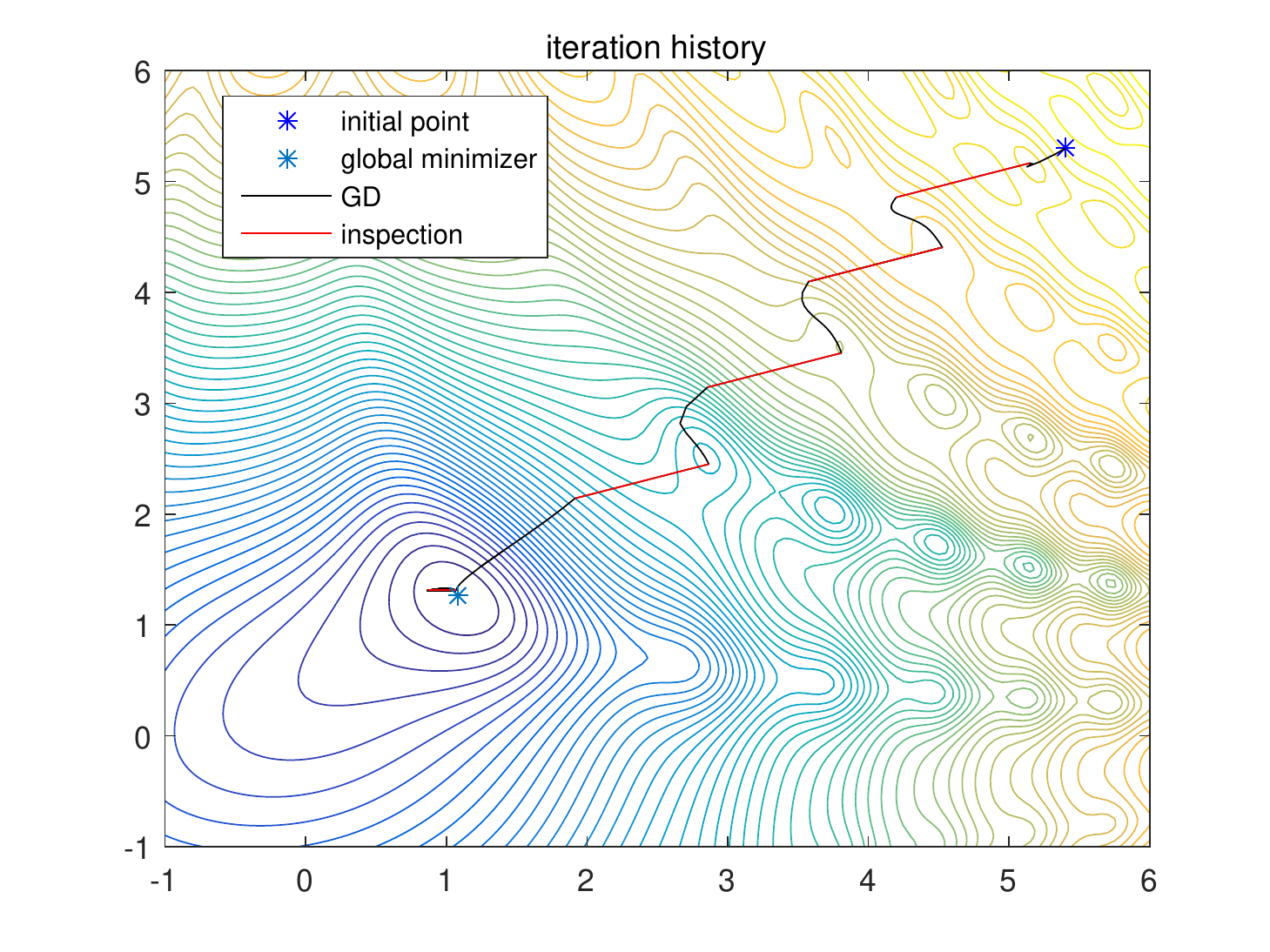}\includegraphics[width=6cm]{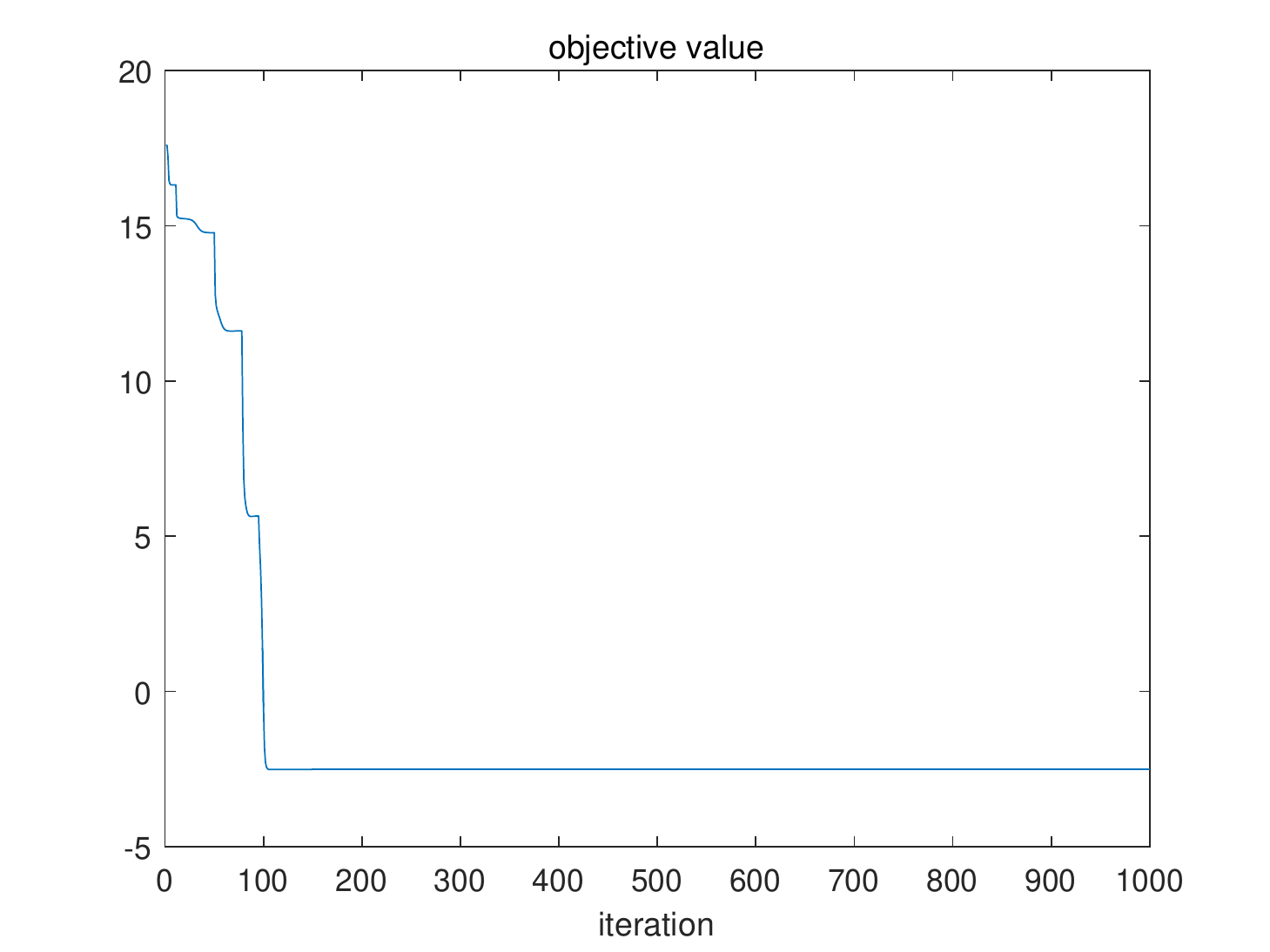}
        \caption{GD iteration with 2D inspection}
        \label{test example:2D inspect}
    \end{figure}
    
        \begin{figure}[ht]
        \centering
        \includegraphics[width=6cm]{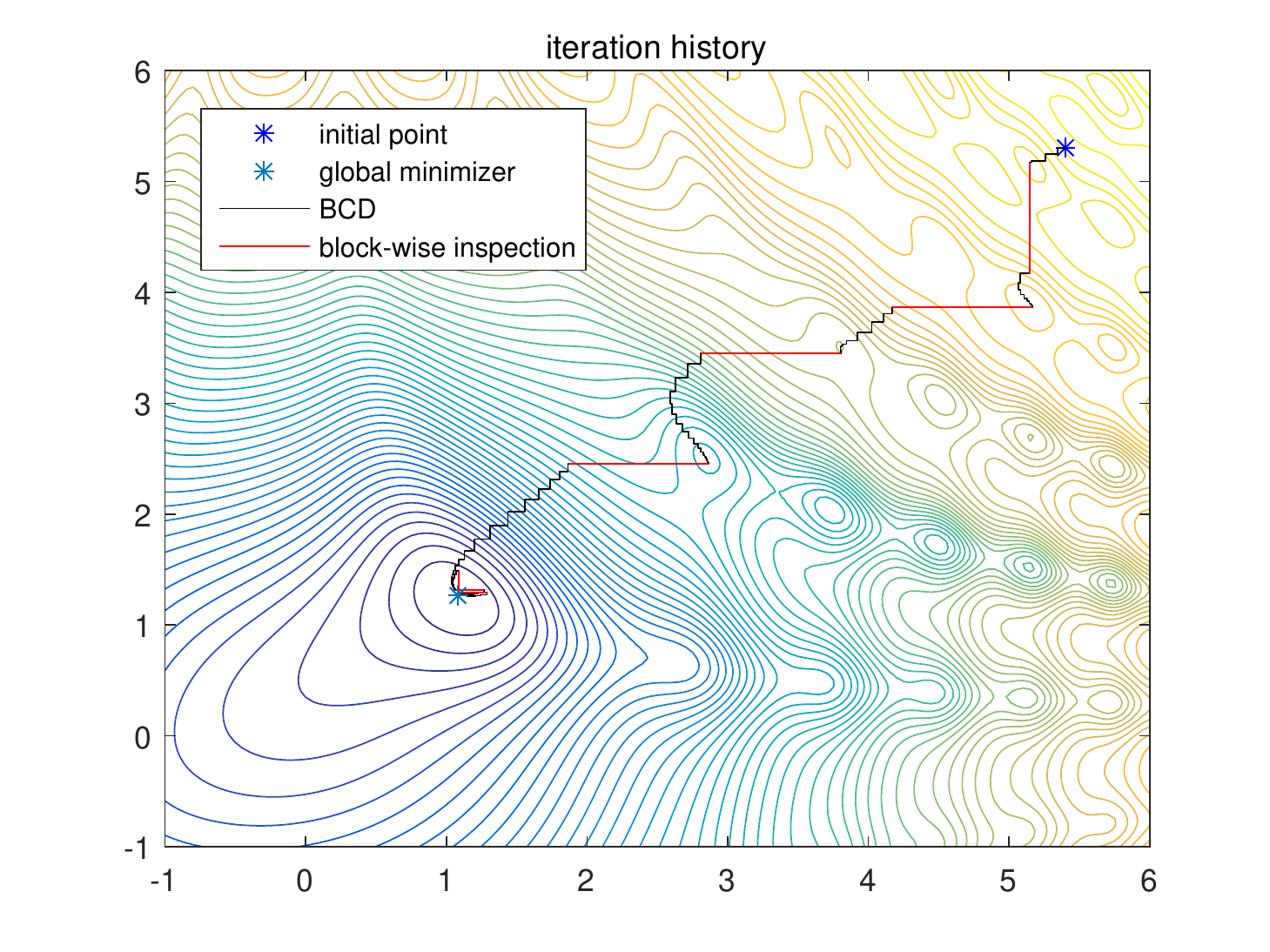}\includegraphics[width=6cm]{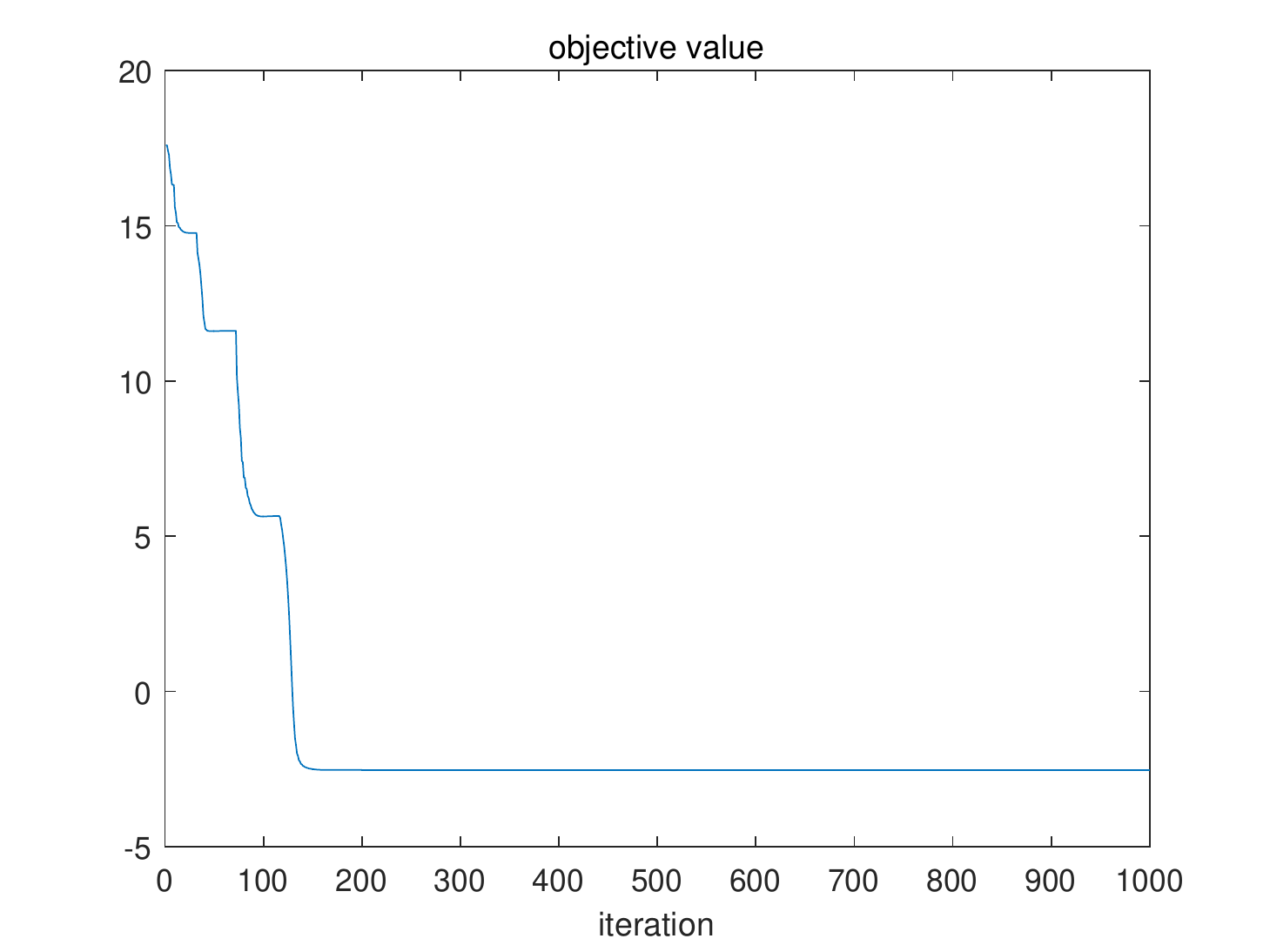}
        \caption{BCD iteration with blockwise 1D inspection}
        \label{test example:blockwise inspect}
    \end{figure}
    
    From the figures, we can observe that blockwise inspection, which is much cheaper than standard inspection, is good at jumping out the valleys of local minimizers. Also, the inspection usually succeeds very quickly at the large initial value of $R$, so it is swift. These observations guide our design of inspection. %Inspection at a large radius is likely to provide more benefits than a small radius. 
    Although smaller values of $R$ are sufficient to escape from local minimizers, especially those that are far away from the global minimizer, we empirically use a rather large $R$ and, to limit the number of sampled points, a relatively large $\Delta R$ as well.
    
    When an iterate is already (near) a global minimizer, there is no better point for inspection to find, so the final inspection will go through all sample points in $B(\bar{\vx},R)$, taking very long to complete, unlike the rapid early inspections. In most applications, however, this seems unnecessary. If $F$ is smooth and strongly convex near the global minimizer $\vx^*$, we can theoretically eliminate spurious local minimizers in $B(\bar{\vx},R')$ and thus search only in the smaller region $B(\bar{\vx},R)\setminus B(\bar{\vx},R')$. Because the function $r$ can be nonsmooth in our assumption, we do not have $R'>0$. But, our future work will explore more types of $r$. It is also worth mentioning that, in some applications, global minimizers can be recognized, for example, based on they having the desired structures, achieving the minimal objective values, or attaining certain lower bounds. If so, the final inspection can be completely avoided.

\subsection{K-means clustering}
Consider applying $k$-means clustering to a set of data $\{x_i\}_{i=1}^n\subset \RR^d$. We assume there are $K$ clusters $\{z_i\}_{i=1}^K$ and have the variables $\vz=[z_1,...z_K]\in \RR^{d\times K}$. The problem to solve is
\[\min_{\vz\in \RR^{d\times K}} f(z)= \frac{1}{2n}\sum_{i=1}^n \min_{1\leq j \leq K} \|x_i-z_j\|^2.\]
A classical algorithm is the Expectation Minimization (EM) method, but it is susceptible to local minimizers. We add inspections to EM to improve its results.

We test the problems in~\cite{yin2017stochastic}. The first problem has synthetic Gaussian data in $\mathbb{R}^2$. A total of 4000 synthetic data points are generated according to four multivariate Gaussian distributions with 1000 points on each, so there are four clusters. Their means and covariance
matrices are:
\begin{gather*}
\mu_1=\begin{bmatrix} -5 \\ -3 \end{bmatrix},~
\mu_2=\begin{bmatrix} 5 \\ -3 \end{bmatrix},~
\mu_3=\begin{bmatrix} 0 \\ 5 \end{bmatrix},~
\mu_4=\begin{bmatrix} 2.5 \\ 4 \end{bmatrix}; \\
\Sigma_1=\begin{bmatrix} 0.8 & 0.1\\ 0.1 & 0.8 \end{bmatrix},~
\Sigma_2=\begin{bmatrix} 1.2 & 0.6\\ 0.6 & 0.7 \end{bmatrix},~
\Sigma_3=\begin{bmatrix} 0.5 & 0.05\\ 0.05 & 1.6 \end{bmatrix},~
\Sigma_4=\begin{bmatrix} 1.5 & 0.05\\ 0.05 & 0.6 \end{bmatrix}.
\end{gather*}

The EM algorithm is an iteration that alternates between labeling each data point (by associating it to the nearest cluster center) and adjusting the locations of the centers. %With initial labels of data points, it computes the center of identical labels and then gets new labels by choosing the nearest center the point belongs. The two steps are repeated until convergence. 
When the labels stop updating, we start an inspection. In the above problem, the dimension of $z_i$ is two, and we apply a 2D inspection on $z_i$ one after one with radius $R=10$, step size $\Delta R=2$, and angle step size $\Delta\theta=\pi/10$. The descent threshold is $\nu=0.1$.

The results are presented in Figure \ref{fig: gaussian}. We can see that the EM algorithm stops at a local minimizer but, with the help of inspection, it escapes from the local minimizer and reaches the global minimizer. This escape occurs at the first sample point in the  $3$rd block at radius $10$ and angle $\theta=7\pi/10$. Since the inspection succeeds on the perimeter of the search ball, it is rapid.

    % \begin{figure}[H]
    %     \centering
        
    %     \includegraphics[width=6cm]{inspect_part1}\includegraphics[width=6cm]{inspect_part2}
    %     \caption{2D inspection}
    %     \label{test example:2D inspect}
    % \end{figure}
    
\begin{figure}[ht]
%   \begin{tabular}{cc}
    \centering
     \includegraphics[width=6cm]{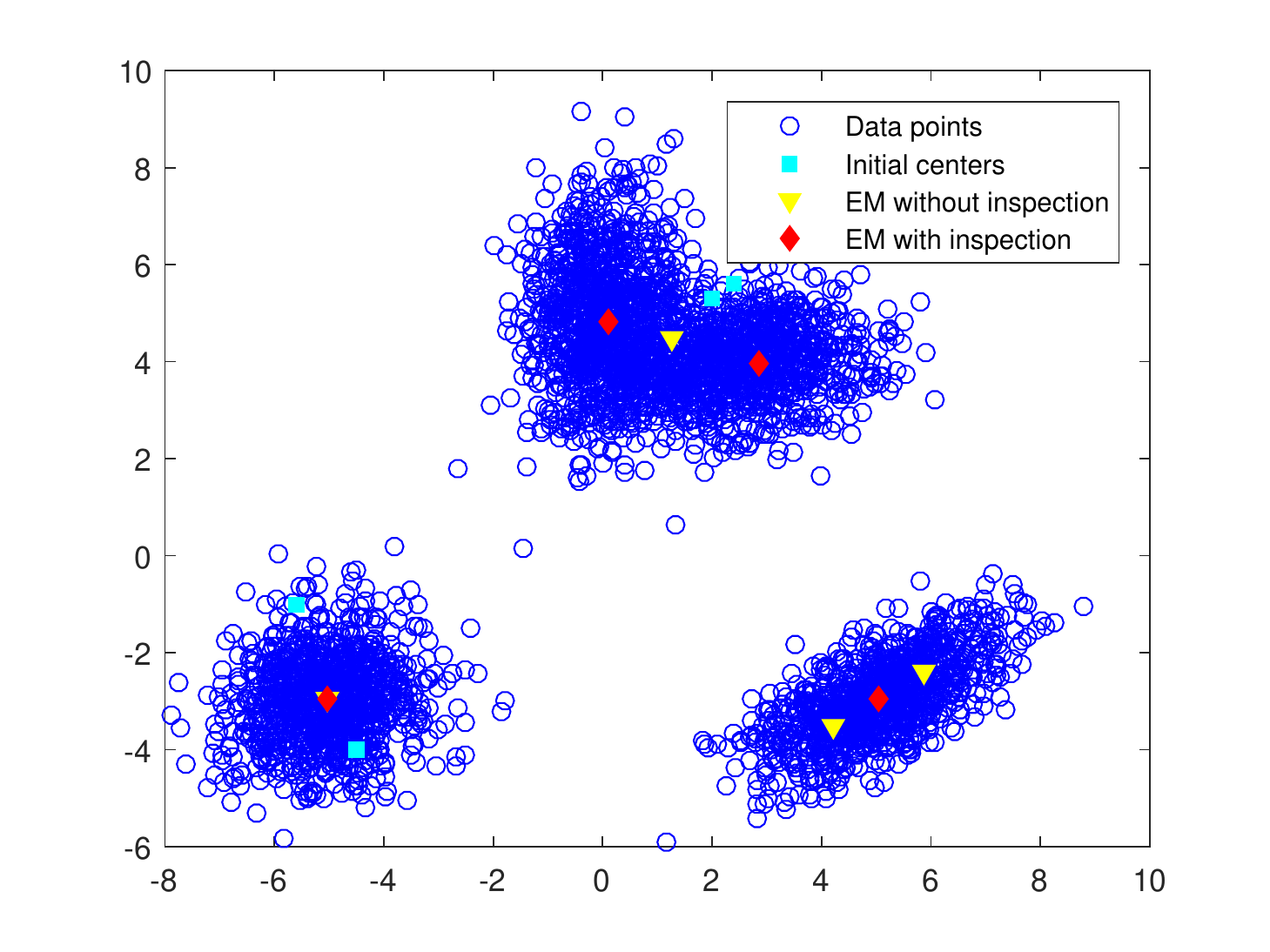}\includegraphics[width=6cm]{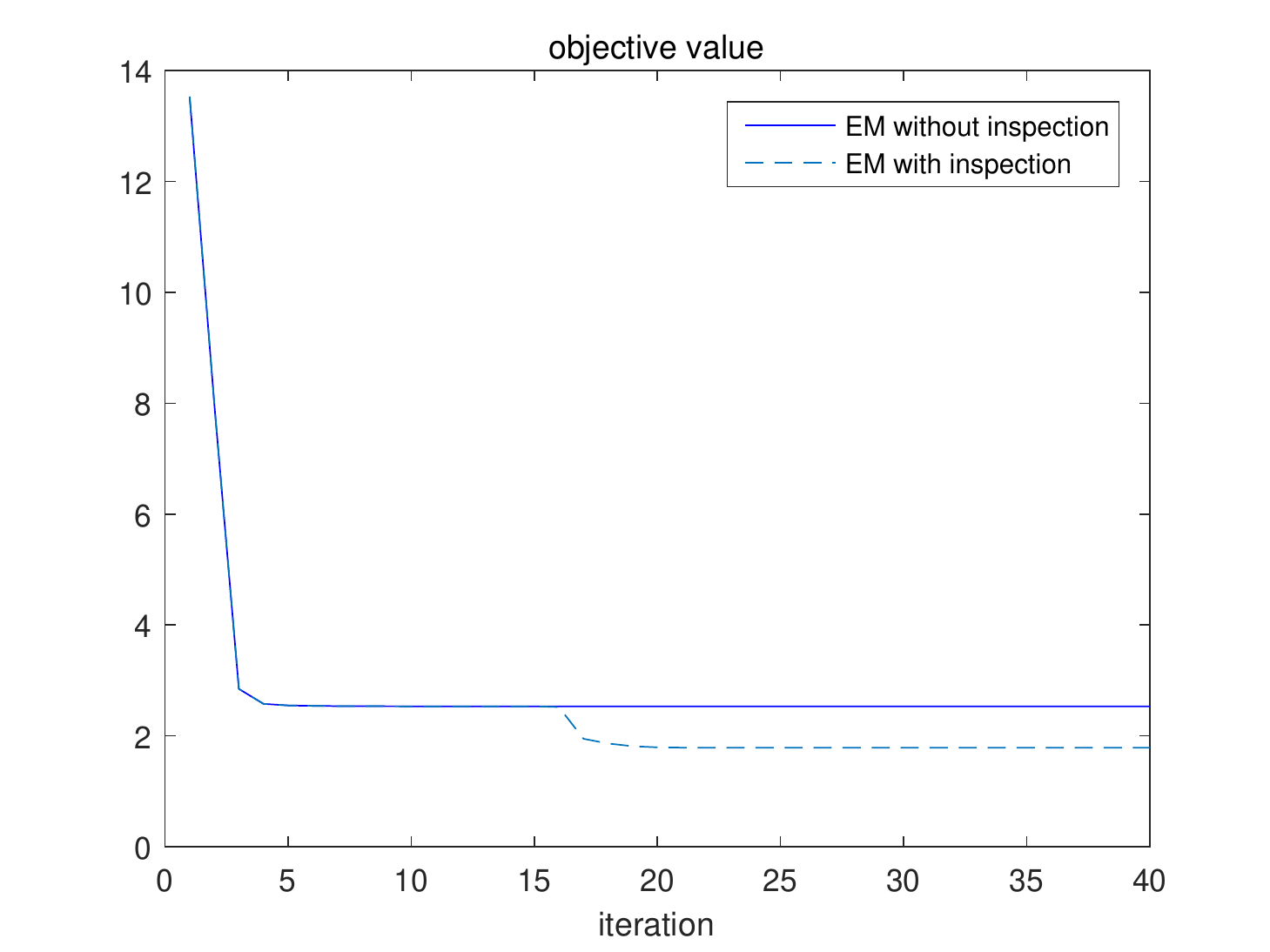} 
%   \end{tabular}
 \caption{Synthetic Gaussian data with 4 clusters. \protect Left: clustering result; Right: objective value in the iteration}
 \label{fig: gaussian}
\end{figure}

We also consider the Iris dataset\footnote{\url{https://archive.ics.uci.edu/ml/datasets/iris}}, which contains 150 4-D data samples from 3 clusters. We compare the performance of the EM algorithm with and without inspection over 500 runs with their initial centers randomly selected from the data samples. We inspect the 4-D variables one after one. Rather than sampling the 4-D polar coordinates, which needs three angular axes, we only inspect two dimensional balls. That is, for center $i_0$ and radius $r$, the inspections sample the following points $z_{i_0}$ that has only two angular variables $\theta_1,\theta_2$:
\[z^{{\rm inspected}}_{i_0}=z_{i_0}+r\times [\cos\theta_1~\sin\theta_1 ~ \cos\theta_2 ~\sin \theta_2]^T.
%\begin{bmatrix}  \cos\theta_1\\ \sin\theta_1 \\ \cos\theta_2 \\\sin \theta_2  \end{bmatrix}.
\]
Such inspections are very cheap yet still effective. Similar lower-dimensional inspections should be used with high dimensional problems. We choose $R=3$, $\Delta R=1$, $\Delta\theta_1=\Delta\theta_2=\pi/10$, and a descent threshold $\nu=10^{-3}$. The results are shown in Figures \ref{fig: iris_histogram} and \ref{fig: iris_result}.
\begin{figure}[ht]
    \centering
 \includegraphics[width=8cm]{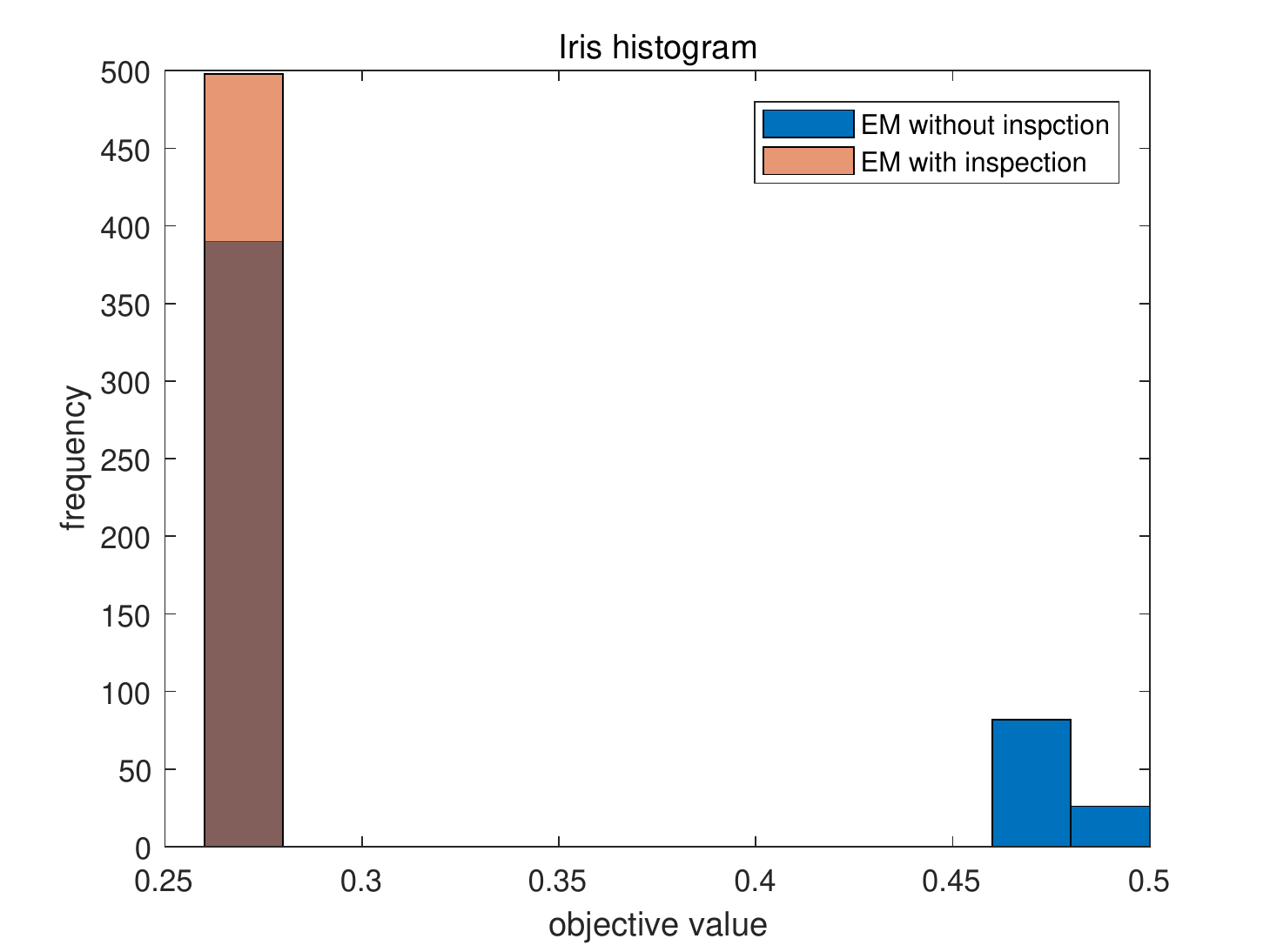}
 \caption{histogram of the final objective values in the 500 experiments}
 \label{fig: iris_histogram}
\end{figure}
\begin{figure}[ht]
    \centering
     \includegraphics[width=6cm]{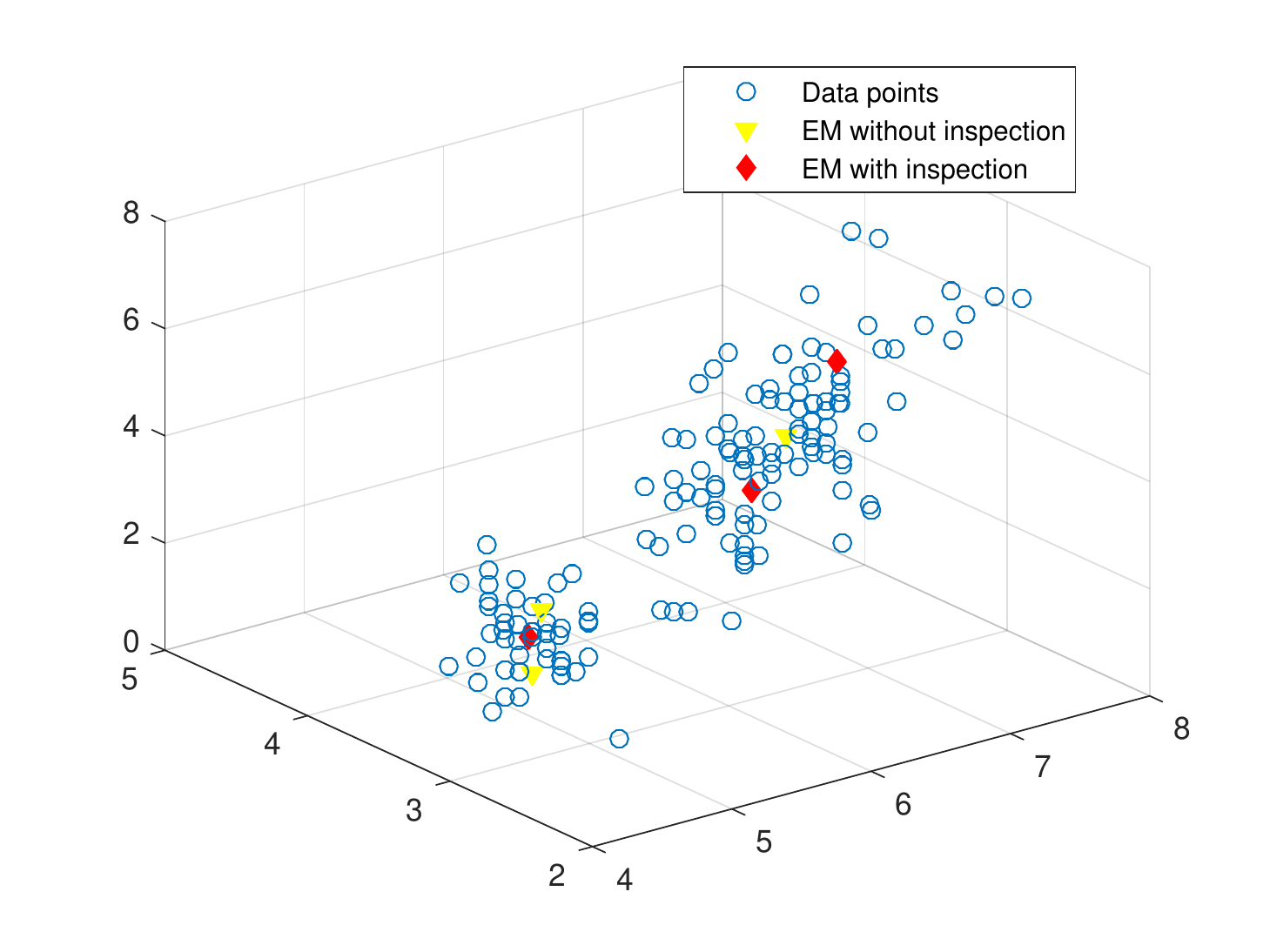}
     \includegraphics[width=6cm]{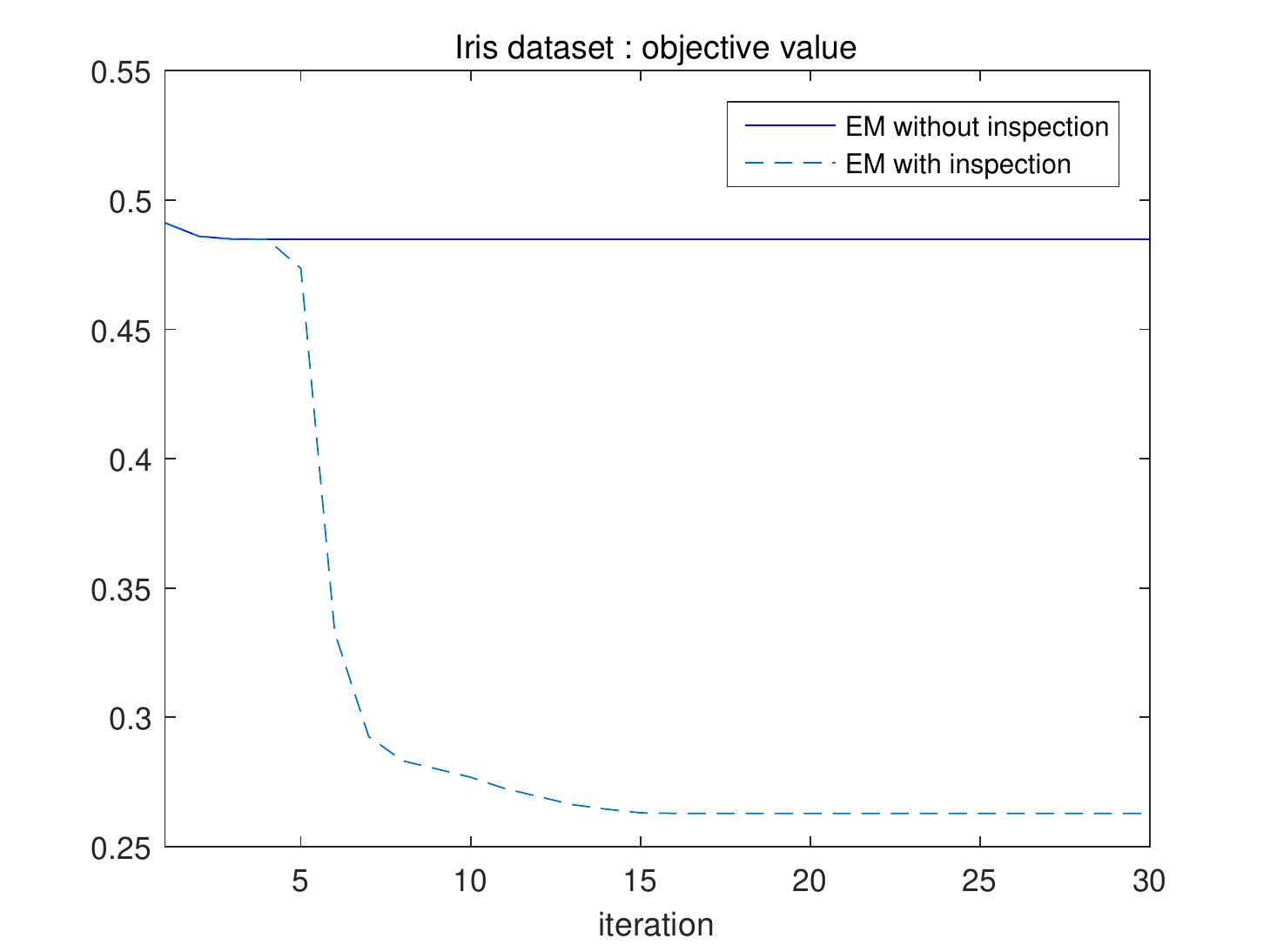} 
\caption{
% Top-left: histogram of EM without inspection in the 500 experiments; Top-right: histogram of EM with inspection; Down-
left: 3-D distribution of Iris data and clustering result in one trial; right: objective value in the iteration of this trial.}
\label{fig: iris_result}
\end{figure}

Among the 500 runs, EM gets stuck at a high objective value 0.48 for 109 times. With the help of inspection, it manages to locate the optimal objective value around 0.263 every time. The average radius-at-escape during the inspections is 2, and the average number of inspections is merely 1.
\subsection{Nonconvex robust linear regression}
In linear regression, we are given a linear model
\[y = \dotp{\vbeta,\vx}+\varepsilon,\]
and the data points $(\vx_1,y_1),(\vx_2,y_2),\ldots,(\vx_n,y_n)$, $y_i\in\RR, \vx_i\in\RR^n$. When there are outliers in the data, robustness is necessary for the regression model. Here we consider Tukey's bisquare loss, which is bounded, nonconvex and defined as:
\begin{align*}
\rho(r)=\left\{\begin{array}{ll}
\frac{r_0^2}{6}\{1-(1-(r/r_0)^2)^3\},  & \text{if } |r|<r_0, \\
\\
\frac{r_0^2}{6},                  & \text{otherwise.}
\end{array}
\right.
\end{align*}
The empirical loss function based on $\rho$ is
\begin{align*}
    l(\vbeta)=\frac{1}{n}\sum\limits_{i=1}^n \rho(y_i-\dotp{\beta, x_i}).
\end{align*}
A commonly used algorithm for this problem is the Iteratively Reweighted Least Squares (IRLS) algorithm ~\cite{fox2002robust}, which may get stuck at a local minimizer. Our Run-and-Inspect Method can help IRLS escape from local minimizers and converge to a global minimizer. Our test uses the model
\[y = 5 + x + \varepsilon,\]
where $\varepsilon$ is noise. We generate $x_i\sim\mathcal{N}(0,1)$, $\varepsilon_i\sim\mathcal{N}(0,0.5), i=1,2,\ldots,20$. We also create 20\% outliers by adding extra noise generated from $\mathcal{N}(0,5)$. And we use Algorithm \ref{algorithm:inspecting process}  with $R=5,dR=0.5,\nu=10^{-3}$. For Tukey's function, $r_0$ is set to be 4.685. The results are shown in Figure \ref{fig:robustregression}.

\begin{figure}[ht]
    \centering
    \includegraphics[width=6cm]{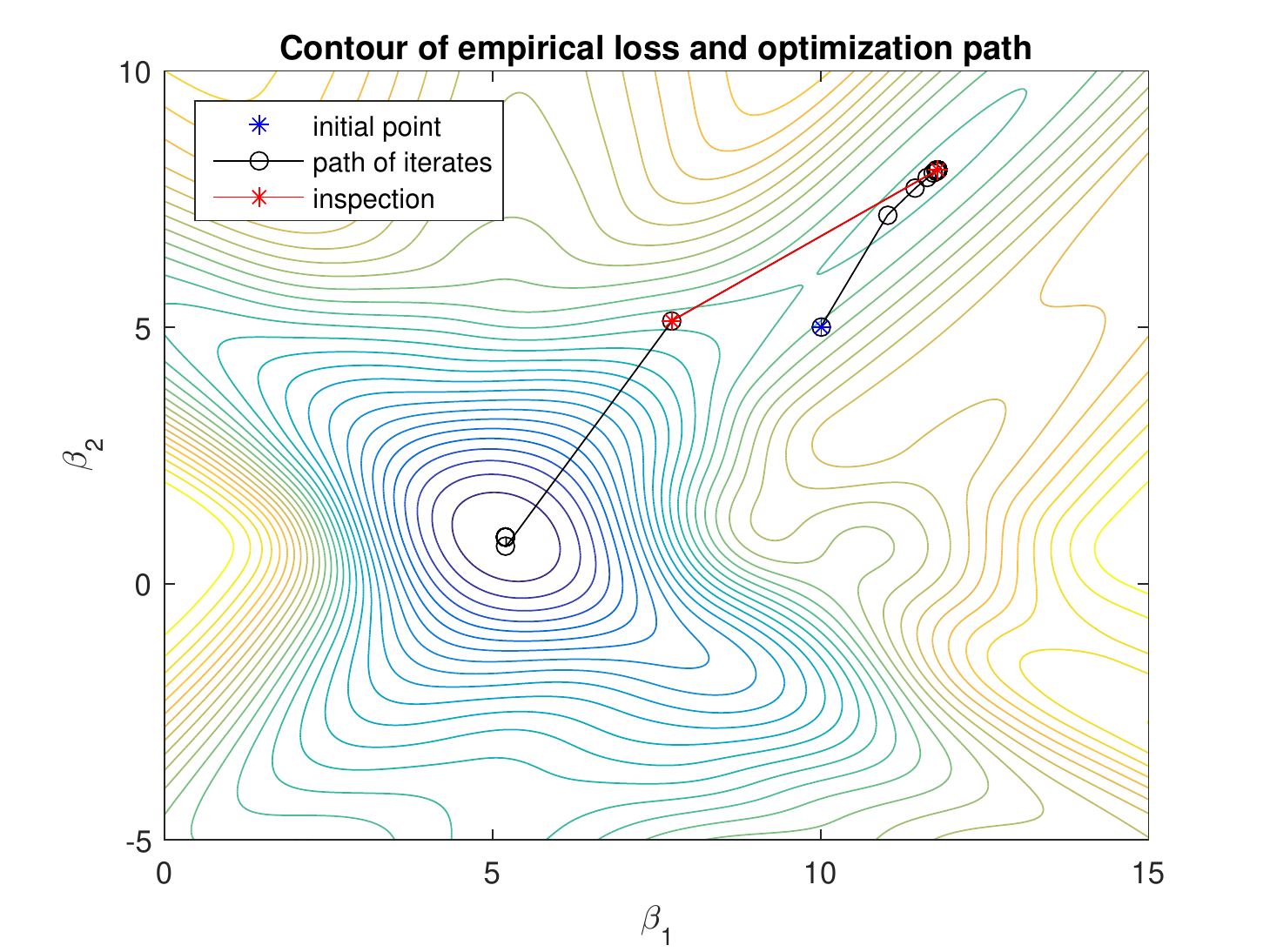}\includegraphics[width=6cm]{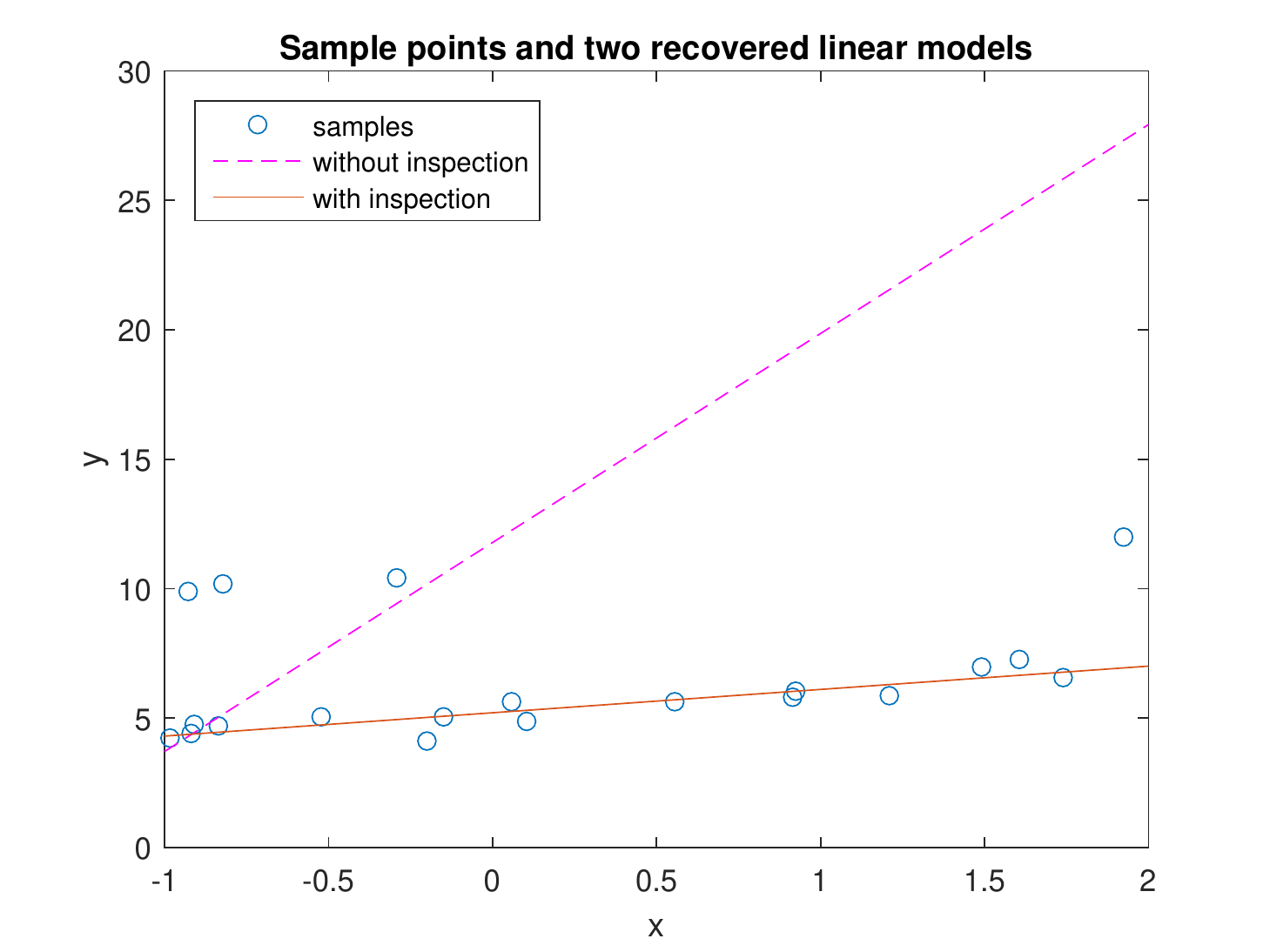}
    \caption{The left picture displays the contour of the empirical loss $l(\vbeta)$ and the path of iterates. Starting from the initial point, IRLS converges to a shallow local minimum. With the help of inspection, it escapes and then converges to the global minimum. The right picture shows linear model obtained by IRLS with (red) and without (magenta) inspection.}
    \label{fig:robustregression}
\end{figure}

\subsection{Nonconvex compressed sensing}
\label{section:compressed sensing}
Given a matrix $A\in\RR^{m\times n}$ $(m<n)$ and a sparse signal $\vx\in\RR^n$, we observe a vector
$$\vb=A\vx.$$
The problem of compressed sensing aims to recover $x$ approximately. Besides $\ell_0$ and $\ell_1$-norm, $\ell_p(0<p<1)$ quasi-norm is often used to induce sparse solutions. Below we use $\ell_{\frac{1}{2}}$ and try to solve the problem
$$\min\limits_{\vx\in\RR^n}\ Q(\vx):=\frac{1}{2}\|A\vx-\vb\|^2+\lambda \|\vx\|_{\frac{1}{2}}^{\frac{1}{2}},$$ 
by cyclic coordinate update. At iteration $k$, it updates the $j$th coordinate, where $j=\mathrm{mod}(k,n)+1$, via
\begin{align}\label{eqn:half thresholding}
x_j^{k+1}&=\argmin\limits_{x_j} Q(x_j,\vx_{-j}^k)\\
&=\argmin\limits_{x_j}\ \frac{1}{2}A_j^TA_jx_j^2+A_j^T(A\vx^k-\vb)x_j+\lambda\sqrt{|x_j|}.
\end{align}
It has been proved in \cite{xu2012l_} that \eqref{eqn:half thresholding} has a closed-form solution. 
Define
\begin{align*}
&B_{j,\mu}(\vx)=x_j-\mu A_j^T(A\vx-\vb),\\
&H_{\lambda,\frac{1}{2}}(z)=\left\{\begin{array}{ll}
\frac{2}{3}z(1+\cos(\frac{2\pi}{3}-\frac{2}{3}\arccos(\frac{\lambda}{4}(\frac{|z|}{3})^{-\frac{3}{2}}))),     &  \text{if }|z|>\frac{^3\sqrt{54}}{4}(2\lambda)^{\frac{2}{3}},\\
0,     & \text{otherwise.}
\end{array}\right.
\end{align*}
Then
\begin{align*}
x_j^{k+1}= H_{\lambda\mu,1/2}(B_{j,\mu}(\vx^k)),
\end{align*}
where $\mu=\|A_j\|^2$.
In our experiments, we choose $m = 25,50,100$ and $n=2m$. The elements of $A$ are generated from $\mathcal{U}(0,\frac{1}{\sqrt{m}})$ i.i.d. The vector $\vx$ has 10\% nonzeros with their values generated from $\mathcal{U}(0.2,0.8)$ i.i.d. Set $b = A\vx$. Here, we apply coordinate descent with inspection (CDI), and compared it with standard coordinate descent (CD) and half thresholding algorithm (\emph{half})~\cite{xu2012l_}. For every pair of $(m,n)$, we choose the parameter $\lambda=0.05$ and run 100 experiments. %We do not show the numerical results of iteratively reweighted algorithms since they fail when $A$ is ill-conditioned. 
When the iterates stagnate at a local minimizer $\bar{\vx}$, we perform a blockwise inspection with each block consisting of two coordinates. Checking all pairs of two coordinates is expensive and not necessary since $\bar{\vx}$ is sparse. We improve the efficiency by pairing only $i,j$ where $x_i\neq 0, x_j=0$. Similar to previous experiments, we sample points from the outer of the 2D ball toward the inner. We choose $R=0.5$, $\Delta R=0.05$. The results are presented in Table \ref{result:compressedsensing} and Figure \ref{fig:compressedsensing}. CDI shows a significant improvement over its competitors.
\begin{table}[ht]
\centering
\begin{tabular}{c|c|c|c|c|c}
\hline
$n,p$ & algorithm & $a$ & $b$ & $c$ & ave obj \\ \hline\hline
$n=25$ & \emph{half}  & 47.73\% & 2  &  2 & 0.0365 \\ \cline{2-6}% 7.27/4.73
$p=50$ & CD   & 62.40\% & 25 & 27 & 0.0272 \\ \cline{2-6}%6.19/4.73
       & CDI  & 83.95\% & 65 & 69 & 0.0208 \\ \hline% 5.31/4.73
$n=50$ & \emph{half}  & 46.43\% &  0 &  0 & 0.0736 \\ \cline{2-6}%14.72/9.53
$p=100$& CD   & 76.39\% & 24 & 32 & 0.0443 \\ \cline{2-6}%9.93/9.53
       & CDI  & 92.34\% & 57 & 68 & 0.0369 \\ \hline%9.23/9.53
$n=100$& \emph{half}  & 44.31\% &  0 &  0 & 0.1622 \\ \cline{2-6} %31.85/19.13                 
$p=200$& CD   & 85.97\% & 10 & 18 & 0.0795 \\ \cline{2-6}%18.38/19.13
       & CDI  & 94.31\% & 54 & 76 & 0.0756 \\ \hline%18.98/19.13
\end{tabular}
\caption{Statistics of 100 compressed sensing problems solved by three $\ell_{\frac{1}{2}}$ algorithms}
\label{result:compressedsensing}
\flushleft{\small{%we use $S^*$ to represent the support of the true signal $x$ and $S$ to represent the support of numerical solutions. $\frac{\#(S\cap S^*)}{\#S^*}$
\begin{enumerate}
\item \emph{half}: iterative half thresholding; CD: coordinate descent; CDI: CD with inspection.
\item $a$ is the ratio of correctly identified nonzeros to true nonzeros, averaged over the 100 tests (100\% is impossible due to noise and model error); $b$ is the number of tests with all true nonzeros identified; $c$ is the number of tests in which the returned points yield lower objective values than that of the true signal (only model error, no algorithm error). Higher $a,b,c$ are better.
\item ``ave obj'' is the average of the objective values; lower is better. %``ave sparsity'' displays the average sparsity of numerical solutions compared with  the average sparsity of true signals.
\end{enumerate}}}
\end{table}
\begin{figure}[ht]
    \centering
    \includegraphics[width=8cm]{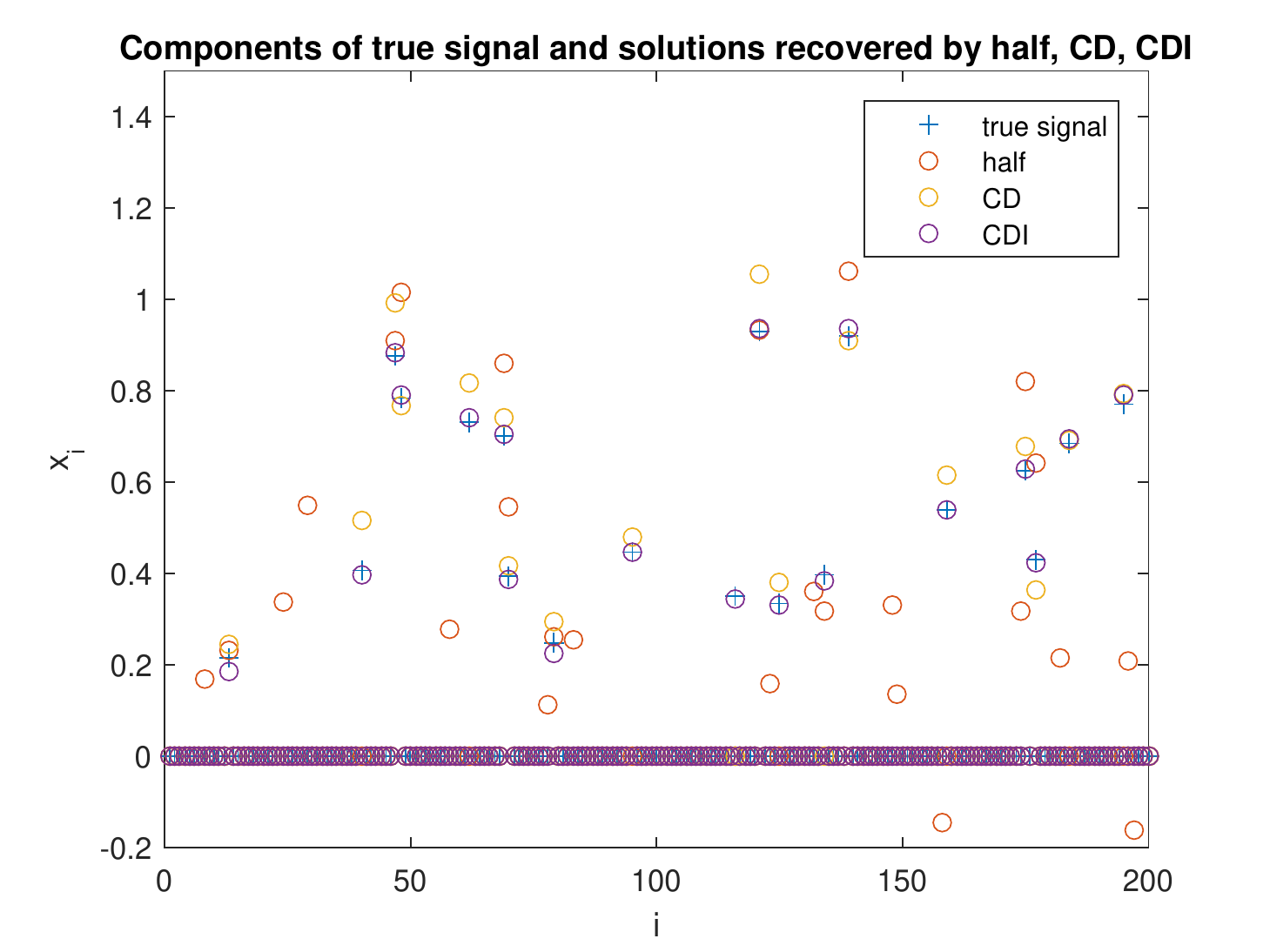}
    \caption{Comparison of the true signal $x$ and signals recovered from \emph{half}, CD, CDI.}
    \flushleft{\small{In one experiment, CDI recovered all positions of nonzeros of $x$, while CD failed to recover $x_{116},x_{134}$. The \emph{half} algorithm just got stuck at a local minizer far from $x$. }}
    \label{fig:compressedsensing}
\end{figure}
\subsection{Nonconvex Sparse Logistic Regression}
Logistic regression is a widely-used model for classification. Usually we are given a set of training data $\{(\vx^{(i)},y^{(i)})\}_{i=1}^N$, where $\vx^{(i)} \in \RR^d $ and $y^{(i)} \in \{0,1\}$. The label $y$ is assumed to satisfy the following conditional distribution:
\begin{align}
\begin{cases}
    p(y=1|\vx;\theta)=\frac{1}{1+\exp(-\vtheta^T\vx)},\\
    p(y=0|\vx;\theta)=\frac{1}{1+\exp(\vtheta^T\vx)},
\end{cases}
\end{align}
where $\vtheta$ is the model parameter.% After we recover $\vtheta$, we can label $\vx$ according to the larger quantity out of $p(y=1|\vx)$ and  $p(y=0|\vx)$.

To learn $\theta$, we minimize the negative log-likelihood function 
\[l(\vtheta) = \sum_{i=1}^N -\log p(y^{(i)}|\vx^{(i)};\vtheta),\]
which is convex and differentiable. When $N$ is relatively small, we need variable selection to avoid over-fitting. %Sparsity-inducing regularizer like the $\ell_1$ norm is widely used, which is convex and easy to deal with. But in order to obtain better sparsity, nonconvex penalty need to be considered. 
In this test, we use the minimax concave penalty (MCP)~\cite{zhang2010nearly}:
\begin{equation*}
p_{\lambda,\gamma}^{\text{MCP}}(x)=\begin{cases}
\lambda|x|-\frac{x^2}{2\gamma}, & \text{if~} |x|\leq\gamma\lambda,\\
\frac{1}{2}\gamma\lambda^2, & \text{if~} |x|>\gamma\lambda.
\end{cases}
\end{equation*}
The $\vtheta$-recovery model writes
\[\min_{\vtheta} l(\vtheta)+\beta p_{\lambda,\gamma}^{\text{MCP}}(\vtheta).\]
The penalty $p_{\lambda,\gamma}^{\text{MCP}}$ is proximable with
\begin{align*}
{\rm prox}_p(z)=\begin{cases}
\frac{\gamma}{\gamma-1}S_{\lambda}(z) & \text{if}~|z|\leq\gamma\lambda,\\
z & \text{if}~|z|>\gamma\lambda
\end{cases}
\end{align*}
where $S_\lambda(z)=(|z|-\lambda)_{+}\sign(z)$. 

We apply the prox-linear (PL) algorithm to solve this problem. When it nearly converges, inspection is then applied. We design our experiments according to \cite{Shen2017Nonconvex}: we consider $d=50$ and $N=200$ and assume the true $\theta^*$ has $K$ non-zero entries.
In the training procedure, we generate data from i.i.d. standard Gaussian distribution, and we randomly choose $K$ non-zero elements with i.i.d standard Gaussian distribution to form $\theta^*$.
The labels are generated by $y=1(\vx^T\theta+w\geq 0)$,
% \begin{align*}
% y=1(\vx^T\theta+n\geq 0),    
% \end{align*}
where $w$ is sampled according to the Gaussian distribution $\mathcal{N}(0,\epsilon^2I)$. We use PL iteration with and without inspection to recover $\theta$. After that, we generate 1000 random test data points to compute the test error of the $\theta$. We set the parameter $\beta=1.5-0.06\times K$, $\lambda=1$, $\gamma=5$ and the step size 0.5 for PL iteration. For each $K$ and $\epsilon$, we run 100 experiments and calculate the mean and variance of the results. The inspection parameters are $R=5$, $\Delta R=1$, and $\Delta\theta=\pi/10$. The sample points in inspections are similar to those in section \ref{section:compressed sensing}. The results are presented in Table \ref{result:logistic}. The objective values and test errors of PLI, the PL algorithm with inspection, are significantly better than the native PL algorithm. On the other hand, the cost is also 3 -- 6 times as high.
% \caption{Sparse logistic regression results}
% \label{result:logistic}
\begin{table}[ht]
\centering
\begin{tabular}{c|c|c|c|c|c|c}
\hline
\multirow{2}{*}{$K,\epsilon$} & \multirow{2}{*}{\begin{tabular}[c]{@{}l@{}}algorithm\end{tabular}} & \multirow{2}{*}{\begin{tabular}[c]{@{}l@{}}average \#.iterations \\ / \#.inspections\end{tabular}} & \multicolumn{2}{c|}{objective value} & \multicolumn{2}{c}{test error} \\ \cline{4-7} 
                              &                                                                                       &                                                                                     & mean              & var         & mean          & var        \\ \hline\hline
$K=5$                         & PL                                                                              & 594                                                                            & 48.0         & 305         & 7.26\%        & 1.55e-03        \\ \cline{2-7} 
$\epsilon=0.01$               & PLI                                                                                  & 3430/11.43                                                                      & 26.8          & 44.9         & 3.79\%        & 5.43e-04        \\ \hline
$K=5$                         & PL                                                                               & 601                                                                            & 52.7          & 409         & 7.81\%        & 1.29e-03        \\ \cline{2-7} 
$\epsilon=0.1$                & PLI                                                                                  & 2280/7.98                                                                       & 33.8          & 51.9         & 4.38\%        & 5.43e-04        \\ \hline
$K=10$                        & PL                                                                               & 1040                                                                            & 43.6          & 87.0         & 8.42\%        & 8.68e-04        \\ \cline{2-7} 
$\epsilon=0.01$               & PLI                                                                                  & 2610/4.78                                                                       & 33.5          & 35.9         & 5.73\%        & 5.73e-04        \\ \hline
$K=10$                        & PL                                                                               & 990                                                                            & 47.5    & 87.2         & 9.41\%        & 9.88e-04        \\ \cline{2-7} 
$\epsilon=0.1$                & PLI                                                                                  & 2370/3.86                                                                       & 36.3          & 40.1         & 5.69\%        & 5.50e-04        \\ \hline
$K=15$                        & PL                                                                               & 1600                                                                            & 36.2          & 54.3         & 7.85\%        & 8.29e-04        \\ \cline{2-7} 
$\epsilon=0.01$               & PLI                                                                                  & 3010/3.21                                                                       & 29.2         & 17.5         & 5.77\%        & 5.20e-04        \\ \hline
$K=15$                        & PL                                                                               & 1570                                                                            & 37.1          & 40.2         & 7.80\%        & 8.30e-04        \\ \cline{2-7} 
$\epsilon=0.1$                & PLI                                                                                  & 2820/2.77                                                                       & 30.7          & 16.0         & 6.66\%        & 4.63e-04        \\ \hline
\end{tabular}
\caption{Sparse logistic regression results of 100 tests. PL is the prox-linear algorithm. PLI is the PL algorithm with inspection. ``var'' is variance.}
\label{result:logistic}
\end{table}

We plot the convergence history of the objective values in one trial and the recovered $\theta$ in Figure \ref{fig: logistic}.
\begin{figure}[ht]
%   \begin{tabular}{cc}
     \centering
     \includegraphics[width=7cm]{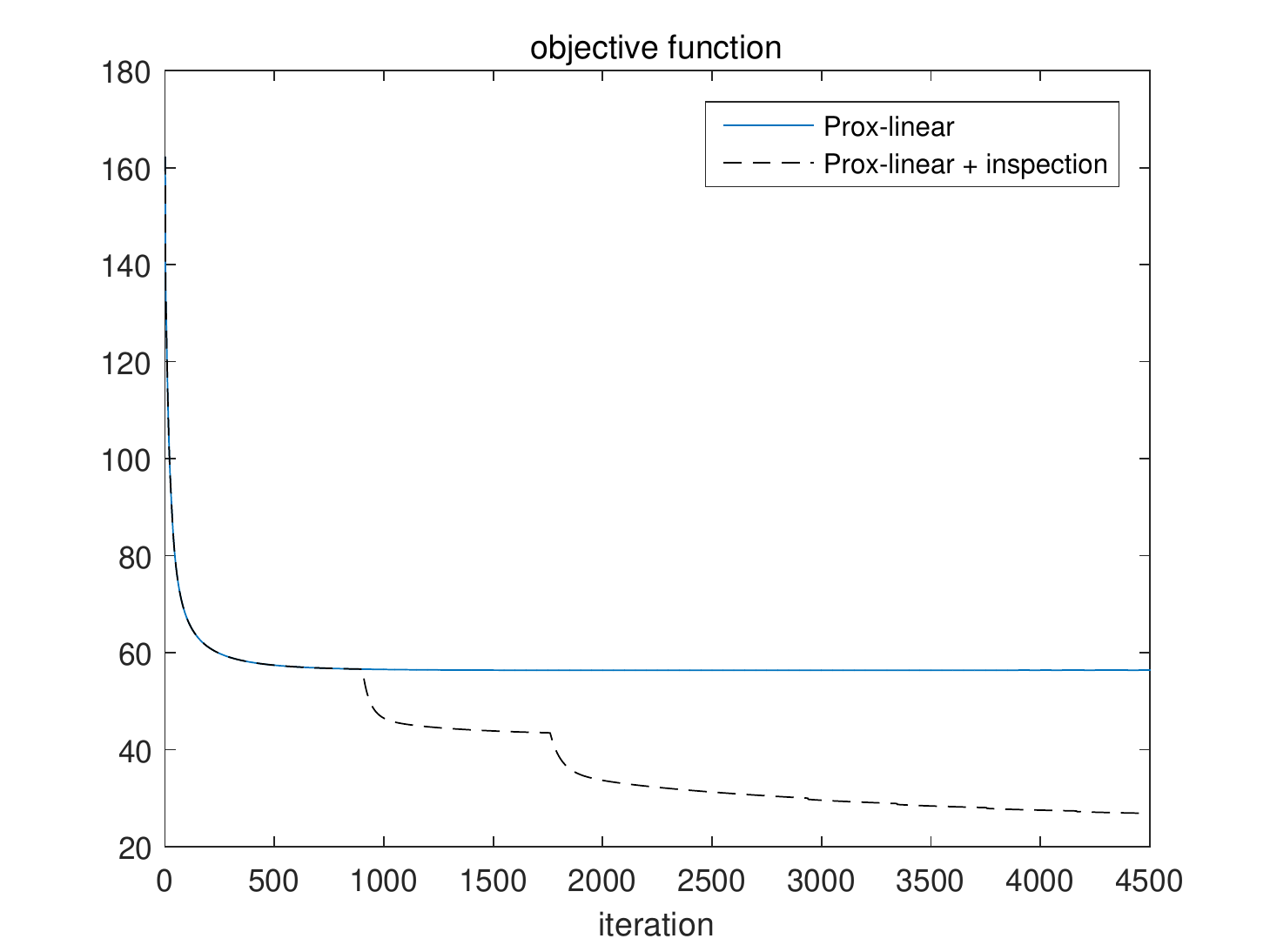} \includegraphics[width=7cm]{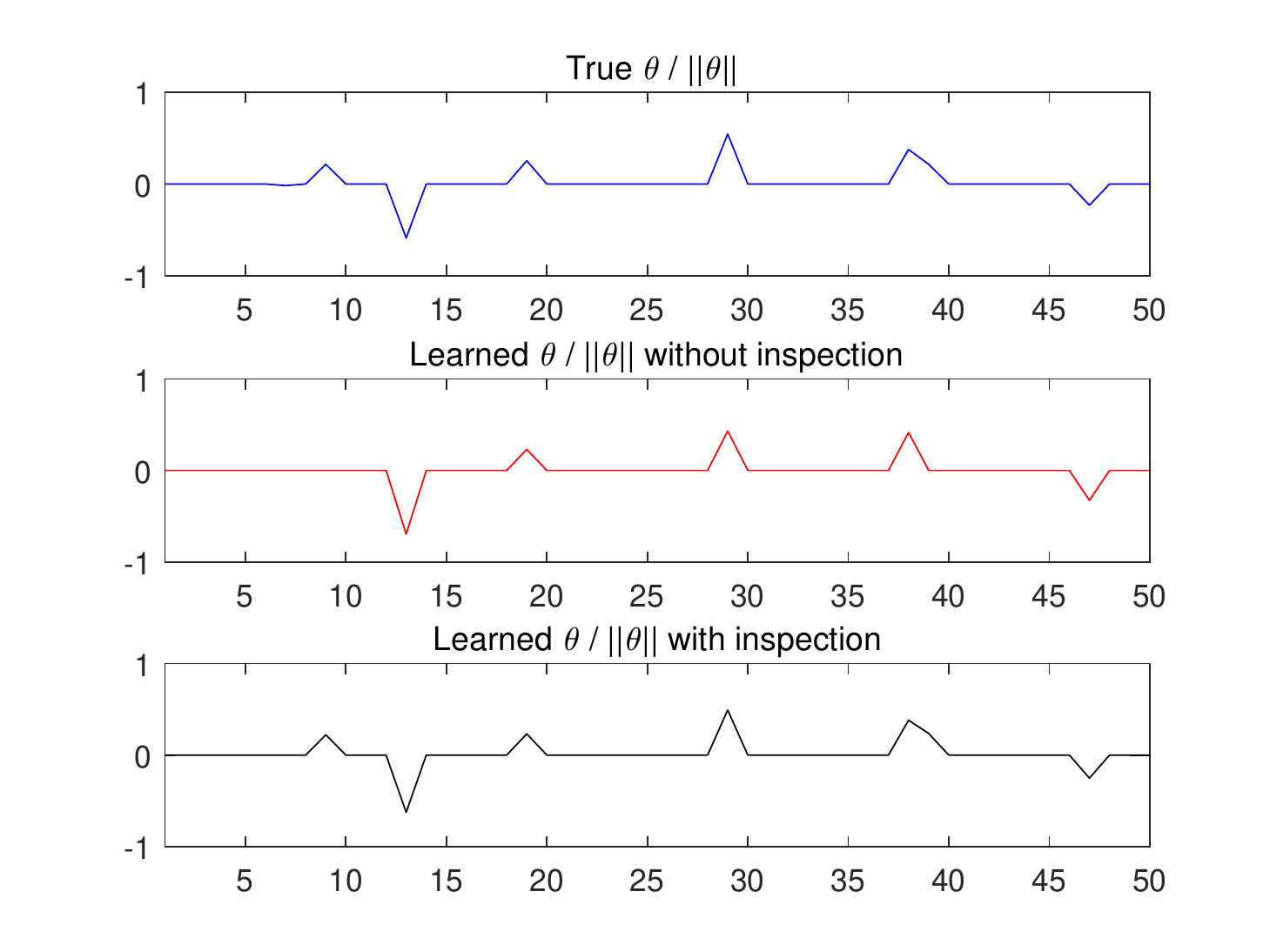}
%   \end{tabular}
  
  \caption{Sparse logistic regression result in one trial.}
  \label{fig: logistic}
\end{figure}
It is clear that the inspection works in learning a better $\theta$ by reaching a smaller objective value.
\section{Conclusions}
\label{section conclusions}
In this paper, we have proposed a simple and efficient method for nonconvex optimization, based on our analysis of $R$-local minimizers. The method applies local inspections to escape from local minimizers or verify the current point is an $R$-local minimizer. For a function that can be implicitly decomposed to a smooth, strongly convex function plus a restricted nonconvex functions, our method returns an (approximate) global minimizer. Although some of the tested problems may not possess the assumed decomposition, numerical experiments support the effectiveness of the proposed method.

\renewcommand\refname{Reference}
\bibliographystyle{spmpsci} 
\bibliography{ref}

\end{document}